\theoremstyle{plain}
\newtheorem{theorem}{Theorem}[section]
\newtheorem{proposition}[theorem]{Proposition}
\newtheorem{lemma}[theorem]{Lemma}
\newtheorem{remark}[theorem]{Remark}
\newtheorem{corollary}[theorem]{Corollary}
\newtheorem{problem}[theorem]{Problem}
\theoremstyle{definition}
\newtheorem{definition}[theorem]{Definition}
\theoremstyle{plain}
	\newtheorem{maintheorem}{Theorem}
\newcommand{\RN}{{\mathbb{R}^N}}
\newcommand{\R}{\mathbb{R}}
\newcommand{\e}{\varepsilon}
\newcommand{\calL}{\mathcal{L}}
\def\bu{\bar{u}}
\def\calF{\mathcal{F}}
\def\calJ{\mathcal{J}}
\def\tf{\tilde{f}}
\def\tJ{\tilde{J}}
\def\hf{\hat{f}}
\def\hF{\hat{F}}
\begin{document}

\title[Semilinear heat equations]{
Thresholds on growth of nonlinearities and\\
singularity of initial functions\\
for semilinear heat equations}

\author{Yasuhito Miyamoto}
\thanks{The first author was supported by JSPS KAKENHI Grant Numbers 19H01797, 19H05599.}
\address{Graduate School of Mathematical Sciences, The University of Tokyo, 3-8-1 Komaba, Meguro-ku, Tokyo 153-8914, Japan}
\email{miyamoto@ms.u-tokyo.ac.jp}

\author{Masamitsu Suzuki}
\thanks{The second author was supported by Grant-in-Aid for JSPS Fellows No. 20J11985.}
\address{Graduate School of Mathematical Sciences, The University of Tokyo, 3-8-1 Komaba, Meguro-ku, Tokyo 153-8914, Japan}
\email{masamitu@ms.u-tokyo.ac.jp}

\begin{abstract}
Let $N\ge 1$ and let $f\in C[0,\infty)$ be a nonnegative nondecreasing function and $u_0$ be a possibly singular nonnegative initial function.
We are concerned with existence and nonexistence of a local in time nonnegative solution in a uniformly local Lebesgue space of a semilinear heat equation
\[
\begin{cases}
\partial_tu=\Delta u+f(u) & \textrm{in}\ \RN\times(0,T),\\
u(x,0)=u_0(x) & \textrm{in}\ \RN
\end{cases}
\]
under mild assumptions on $f$.
A relationship between a growth of $f$ and an integrability of $u_0$ is studied in detail.
Our existence theorem gives a sharp integrability condition on $u_0$ in a critical and subcritical cases, and it can be applied to a regularly or rapidly varying function $f$.
In a doubly critical case existence and nonexistence of a nonnegative solution can be determined by special treatment.
When $f(u)=u^{1+2/N}[\log(u+e)]^{\beta}$, a complete classification of existence and nonexistence of a nonnegative solution is obtained.
We also show that the same characterization as in Laister {\it et.\ al.\ }[11] is still valid in the closure of the space of bounded uniformly continuous functions in the space $L^r_{\rm ul}(\RN)$.
Main technical tools are a monotone iterative method, $L^p$-$L^q$ estimates, Jensen's inequality and differential inequalities.
\end{abstract}

\date{\today}
\subjclass[2010]{primary 35K55,  secondary 35A01, 46E30.}
\keywords{Existence and nonexistence; Doubly critical case; Uniformly local $L^p$ space; Regularly and rapidly varying functions}
\maketitle

\date{\today}
\section{Introduction and main results}
We are concerned with existence and nonexistence of a local in time solution for a semilinear heat equation
\begin{equation}\label{S1E1}
\begin{cases}
\partial_tu=\Delta u+f(u) & \textrm{in}\ \RN\times (0,T),\\
u(x,0)=u_0(x) & \textrm{in}\ \RN,
\end{cases}
\end{equation}
where the domain is $\RN$, $N\ge 1$, $f$ is a $C^1$ function and the initial function $u_0$ may be unbounded.
When $u_0\in L^{\infty}(\RN)$, it is known that a solution can be constructed by contraction mapping theorem.
On the other hand, when $u_0\not\in L^{\infty}(\RN)$, the existence of a solution is not trivial, and it depends on the balance between a growth of $f$ and a strength of singularities of $u_0$, {\it i.e.}, an integrability of $u_0$. 
Weissler~\cite{W80} studied the power case $f(u)=|u|^{p-1}u$ and obtained the following:
\begin{proposition}\label{S1P1}
Let $f(u)=|u|^{p-1}u$, $p>1$ and $r_c:=N(p-1)/2$.
Then the following hold:
\begin{enumerate}
\item (Existence) The problem (\ref{S1E1}) admits a local in time solution $u(t)\in C([0,T),L^r(\RN))$ if one of the following holds:
\begin{enumerate}
\item (Subcritical case) $r>r_c$, $r\ge 1$ and $u_0\in L^r(\RN)$.
\item (Critical case) $r=r_c>1$ and $u_0\in L^r(\RN)$.
\end{enumerate}
\item (Nonexistence) For each $1\le r<r_c$, there exists a nonnegative function $u_0\in L^r(\RN)$ such that (\ref{S1E1}) admits no nonnegative solution.
\end{enumerate}
\end{proposition}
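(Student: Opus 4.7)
The plan is to work with the Duhamel reformulation
\[
u(t) = e^{t\Delta}u_0 + \int_0^t e^{(t-s)\Delta}|u(s)|^{p-1}u(s)\,ds
\]
and construct solutions by Picard iteration in a suitable Banach space, while the $L^p$-$L^q$ smoothing estimate
\[
\|e^{t\Delta}g\|_{L^q(\RN)} \le (4\pi t)^{-\frac{N}{2}(\frac{1}{p}-\frac{1}{q})}\|g\|_{L^p(\RN)}, \qquad 1\le p\le q\le\infty,
\]
serves as the central analytic tool. Since $\||u|^{p-1}u\|_{L^{r/p}}=\|u\|_{L^r}^p$, applying this bound in the Duhamel integral with source exponent $r/p$ and target $r$ produces a time factor $(t-s)^{-N(p-1)/(2r)}$, and the dichotomy between existence and nonexistence is governed precisely by whether this singularity is integrable at $s=t$, i.e., by the sign of $r-r_c$.

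For the subcritical case $r>r_c$, $r\ge 1$, I would fix an auxiliary exponent $q$ with $r\le q\le pr$ (so the smoothing estimate can bridge the nonlinearity) and define the weighted space
\[
X_T := \Bigl\{u\in C([0,T],L^r(\RN)) : \sup_{0<t<T} t^{\beta}\|u(t)\|_{L^q(\RN)}<\infty\Bigr\}, \qquad \beta := \tfrac{N}{2}\bigl(\tfrac{1}{r}-\tfrac{1}{q}\bigr).
\]
The Duhamel map $\Phi$ is then a contraction on a small closed ball of $X_T$ for $T$ small: the nonlinear time integral is summable because $N(p-1)/(2r)<1$, and the explicit factor $T^{1-N(p-1)/(2r)}$ generated by that integration furnishes the small constant. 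The contraction estimate itself uses the pointwise bound $||a|^{p-1}a-|b|^{p-1}b|\le C(|a|^{p-1}+|b|^{p-1})|a-b|$ together with H\"older's inequality, while $t$-continuity into $L^r$ follows from the strong continuity of $e^{t\Delta}$ and dominated convergence.

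The critical case $r=r_c>1$ is subtler because $N(p-1)/(2r)=1$ makes the time integral borderline: the factor $T^{1-N(p-1)/(2r)}$ is no longer small. One still works in a space of the above type with suitable $q>r_c$, but now the small parameter must come from the linear part. I would show that $\sup_{0<t<T} t^{\beta}\|e^{t\Delta}u_0\|_{L^q}\to 0$ as $T\to 0^+$ by approximating $u_0$ in $L^{r_c}$ by bounded compactly supported functions and invoking the smoothing estimate on the difference. This is precisely where the hypothesis $r_c>1$ becomes essential, since at the borderline $r_c=1$ ill-posedness at the Fujita exponent is known to occur. A matching beta-function bound on the nonlinear integral of the correct homogeneity---made possible by the fact that the critical relation $r=r_c$ is exactly what forces the iteration exponents to close---then yields a contraction on a sufficiently small ball of $X_T$.

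For nonexistence in the range $1\le r<r_c$, I would follow Weissler's singular-datum construction. The inequality $N/r>N/r_c=2/(p-1)$ shows that $\varphi(x)=|x|^{-2/(p-1)}$ is locally $L^r$ near the origin, so a truncation $u_0(x)=\lambda\varphi(x)\mathbf{1}_{|x|<1}$ belongs to $L^r(\RN)$. A rescaling of the heat kernel gives $e^{t\Delta}u_0(0)\ge c\lambda\,t^{-1/(p-1)}$ for small $t>0$, matching exactly the self-similar blow-up rate of the ODE $y'=y^p$; for $\lambda$ sufficiently large, iterating the integral inequality $u(t,0)\ge e^{t\Delta}u_0(0)+\int_0^t e^{(t-s)\Delta}u(s)^p(0)\,ds$ produces successively stronger singular lower bounds on $u(t,0)$ that contradict any finite $L^r$-valued trajectory. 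The main obstacle throughout is the critical case: verifying the vanishing of $\sup_{0<t<T} t^{\beta}\|e^{t\Delta}u_0\|_{L^q}$ at exactly the critical scale is genuinely subtler than the corresponding subcritical step, and is precisely what the hypothesis $r_c>1$ permits.
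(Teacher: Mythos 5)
The paper does not prove Proposition~\ref{S1P1}: it is quoted as Weissler's classical result and attributed to \cite{W80} (see also \cite{W81,BC96}), so there is no in-paper argument to compare against. Your sketch correctly reconstructs the standard proof: a contraction in the two-norm space $C([0,T],L^r)\cap\{\sup_t t^{\beta}\|u(t)\|_{L^q}<\infty\}$, with smallness supplied by the factor $T^{1-r_c/r}$ in the subcritical case and by the vanishing of $\sup_{0<t<T}t^{\beta}\|e^{t\Delta}u_0\|_{L^q}$ (proved by density) in the critical case, together with Weissler's singular-datum iteration for nonexistence. One imprecision worth noting: the density argument for the vanishing of $\sup_t t^{\beta}\|e^{t\Delta}u_0\|_{L^q}$ works for every $r\ge 1$, including $r=1$, so that is not where $r_c>1$ enters; the hypothesis is needed to pick the auxiliary exponent $q$ in the window $\max\{p,r_c\}\le q<pr_c$, so that $\||u|^{p-1}u\|_{L^{q/p}}=\|u\|_{L^q}^p$ is legitimate (i.e.\ $q/p\ge 1$) while $s^{-p\beta}$ stays integrable at $s=0$; when $r_c=1$ this window is empty, consistent with the nonexistence results of \cite{BC96,CZ03} in the doubly critical case.
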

Let $u(x,t)$ be a solution of $\partial_t u=\Delta u+|u|^{p-1}u$.
Let $\lambda>0$ and $u_{\lambda}(x,t)=\lambda^{2/(p-1)}u(\lambda x,\lambda^2t)$.
Then, $u_{\lambda}$ also satisfies the same equation.
We see that $\left\|u_{\lambda}(x,0)\right\|_{r}=\left\|u(x,0)\right\|_{r}$ if and only if $r=r_c$.
Proposition~\ref{S1P1} shows that
\[
u_0\in L^{r_c}(\RN)
\]
is an optimal integrability condition for the solvability.
For the case $f(u)=|u|^{p-1}u$, much attention has been paid and a brief history can be found in \cite{LRSV16}.
See also \cite{CW98,W81,QS07} for various results.

As mentioned in \cite{LS20}, a tight correspondence between $f$ and the integrability of $u_0$ fails in the case where $f(u)\neq |u|^{p-1}u$.
Then, two problems arise:
\begin{enumerate}
\item[(A)] given $f$, characterize the set $S$ of initial data for which (\ref{S1E1}) has a solution;
\item[(B)] given the set $S$ of initial data, characterize the nonlinearity $f$ for which (\ref{S1E1}) has a solution for every initial data in $S$.
\end{enumerate}
With regards to (B), Laister {\it et.~al.~}\cite{LRSV16} gave a complete answer.
In \cite{LRSV16} the following was proved:
Let $f$ be a nonnegative nondecreasing continuous function and $\Omega$ be a smooth bounded domain.
Then, a Cauchy Dirichlet problem
\begin{equation}\label{S1E1+}
\begin{cases}
\partial_t u=\Delta u+f(u) & \textrm{in}\ \Omega\times (0,T),\\
u=0 & \textrm{on}\ \partial\Omega\times (0,T),\\
u(x,0)=u_0(x) & \textrm{in}\ \Omega
\end{cases}
\end{equation}
admits a local in time nonnegative solution for every nonnegative initial data $u_0\in L^r(\Omega)$ if and only if
\begin{equation}\label{LRSV}
\begin{cases}
\limsup_{u\to\infty}\frac{f(u)}{u^{1+2r/N}}<\infty & \textrm{if}\ 1<r<\infty,\\
\int_1^{\infty}\frac{\tf(u)}{u^{1+{2}/{N}}}du<\infty & \textrm{if}\ r=1,
\end{cases}
\end{equation}
where $\tf(u)=\sup_{1\le \tau\le u}\frac{f(\tau)}{\tau}$.
In the case $r=1$ various properties were studied in \cite{LS20}.

In this paper we mainly study Problem (A) and also study Problem (B) under a general integrability condition on $u_0$.
We prepare some notation.
Let $1\le r\le \infty$.
We define uniformly local $L^r$ spaces by
\[
L^r_{\rm ul}(\RN):=\left\{ u\in L^1_{\rm loc}(\RN)\left|\ \left\|u\right\|_{L^r_{\rm ul}(\RN)}<\infty\right.\right\}.
\]
Here, for $\rho>0$, $B(y,\rho):=\{x\in\RN|\ |x-y|<\rho\}$ and
\[
\left\|u\right\|_{L^r_{\rm ul}(\RN)}:=
\begin{cases}
\sup_{y\in\RN}\left(\int_{B(y,1)}|u(x)|^rdx\right)^{1/r} & \textrm{if}\ \ 1\le r<\infty,\\
{\rm{esssup}}_{y\in\RN}\left\| u\right\|_{L^{\infty}(B(y,1))} & \textrm{if}\ \ r=\infty.
\end{cases}
\]
We easily see that $L^{\infty}_{\rm ul}(\RN)=L^{\infty}(\RN)$ and that $L^{\beta}_{\rm ul}(\RN)\subset L^{\alpha}_{\rm ul}(\RN)$ if $1\le \alpha\le\beta$.
We define $\calL^r_{\rm ul}(\RN)$ by
\[
\calL^r_{\rm ul}(\RN):=
\overline{BUC(\RN)}^{\|\,\cdot\,\|_{L^r_{\rm ul}(\RN)}},
\]
{\it i.e.}, $\calL^r_{\rm ul}(\RN)$ denotes the closure of the space of bounded uniformly continuous functions $BUC(\RN)$ in the space $L^r_{\rm ul}(\RN)$.
We assume
\begin{equation}\label{f}
f\in C^1(0,\infty)\cap C[0,\infty),\ f(u)>0\ \textrm{for}\ u>0,\ f'(u)\ge 0\ \textrm{for}\ u>0,\ \ F(u)<\infty\ \textrm{for}\ u>0,\tag{f}
\end{equation}
where
\[
F(u):=\int_u^{\infty}\frac{d\tau}{f(\tau)}.
\]
We define $X_q$ by
\[
X_q:=\left\{
f\in C[0,\infty)\left|\ \textrm{$f$ satisfies (\ref{f}) and the limit $q:=\lim_{u\to\infty}f'(u)F(u)$ exists.}\right.
\right\}.
\]
In \cite{FI18,M18} it was proved that if the limit $q$ exists, then $q\in[1,\infty]$.
Let us explain the exponent $q$.
If $f\in C^2$, then by L'Hospital's rule we have
\[
q=\lim_{u\to\infty}\frac{F(u)}{1/f'(u)}=\lim_{u\to\infty}\frac{(F(u))'}{(1/f'(u))'}=\lim_{u\to\infty}\frac{f'(u)^2}{f(u)f''(u)}.
\]
The growth rate of $f$ can be defined by $p:=\lim_{u\to\infty}uf'(u)/f(u)$.
We  apply L'Hospital's rule. Then,
\[
\frac{1}{p}=\lim_{u\to\infty}\frac{(f(u)/f'(u))'}{(u)'}
=\lim_{u\to\infty}\left(1-\frac{f(u)f''(u)}{f'(u)^2}\right)=1-\frac{1}{q},
\ \textrm{and hence}\ \frac{1}{p}+\frac{1}{q}=1.
\]
The $q$ exponent is the conjugate exponent of the growth rate $p$.
For example, if $f(u)=u^p$ ($p>1$), then $q=p/(p-1)$.
The leading term is not necessarily a pure power function $u^p$.
If $f(u)=u^p[\log (u+e)]^{\beta}$ $(p>1,\ \beta\in\R)$, then $q=p/(p-1)$.
The case $q=1$ corresponds to the superpower case.
For instance, the $q$ exponent becomes $1$ if 
\[
f(u)=\exp(u^{p})\ (p>0),\ \ f(u)=\exp(\underbrace{\cdots\exp(u)\cdots}_{n\ \textrm{times}})
\ \ \textrm{or}\ \ f(u)=\exp(|\log u|^{p-1}\log u)\ (p>1).
\]
Fujishima-Ioku~\cite{FI18} studied Problem (A) for $f\in X_q$ and obtained the following:
\begin{proposition}\label{S1P2}
The following hold:
\begin{enumerate}
\item (Existence) Let $u_0\ge 0$. Suppose that $f\in X_q$ and
\begin{equation}\label{S1P2E1}
f'(u)F(u)\le q\ \ \textrm{for large $u>0$.}
\end{equation}
Then (\ref{S1E1}) has a local in time nonnegative solution if one of the following holds:
\begin{enumerate}
\item (Subcritical case) $r>N/2$, $q\le 1+r$ and $F(u_0)^{-r}\in L^1_{\rm ul}(\RN)$.
\item (Critical case) $r=N/2$, $q<1+r$ and $F(u_0)^{-r}\in\calL^1_{\rm ul}(\RN)$.
\end{enumerate}
\item (Nonexistence) Suppose that $f\in C^2[0,\infty)\cap X_q$ with $q<1+N/2$ and that $f''(u)\ge 0$ for $u\ge 0$.
If $0<r<N/2$ and $q\le 1+r$, then there exists a nonnegative initial function $u_0$ such that $F(u_0)^{-r}\in L^1_{\rm ul}(\RN)$ and (\ref{S1E1}) admits no nonnegative solution.
\end{enumerate}
\end{proposition}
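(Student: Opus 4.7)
The plan is to reduce to a pure-power problem via the transformation $V := F(u)^{-r}$, run a monotone iteration for existence, and apply Jensen's inequality to a Gaussian-averaged quantity for nonexistence. Since $F$ is strictly decreasing, $V = F(u)^{-r}$ is well defined, and direct differentiation using $F'(u) = -1/f(u)$ yields
\[
V'(u) = \frac{r}{f(u)F(u)^{r+1}}, \qquad V''(u) = \frac{r\bigl[(r+1) - f'(u)F(u)\bigr]}{f(u)^2 F(u)^{r+2}}.
\]
The hypothesis $f'(u)F(u) \le q \le 1+r$ for large $u$ gives $V'' \ge 0$, so $V$ is convex in $u$. If $u$ solves (\ref{S1E1}), then $\partial_t V = V'(u)\partial_t u$ combined with $\Delta V = V'(u)\Delta u + V''(u)|\nabla u|^2$ gives $\partial_t V \le \Delta V + r\,V^{1+1/r}$. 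Thus $V$ is a subsolution of a pure-power heat equation whose critical Lebesgue exponent (via Proposition \ref{S1P1}) is $N/(2r)$, which equals $1$ precisely at $r = N/2$; this explains why $L^1_{\rm ul}$ is the natural scale for $F(u_0)^{-r}$.

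For existence (part (i)), I would run the standard monotone iteration $u^{(0)} \equiv 0$, $u^{(n+1)}(t) = e^{t\Delta}u_0 + \int_0^t e^{(t-s)\Delta}f(u^{(n)}(s))\,ds$, yielding a pointwise increasing sequence by monotonicity of $f$. The uniform upper bound would come from a supersolution $u^{*} := F^{-1}(W^{-1/r})$, where $W$ solves the auxiliary problem $\partial_t W = \Delta W + cW^{1+1/r}$ with datum $F(u_0)^{-r}$; the latter is locally solvable by standard $L^p$-$L^q$ heat-semigroup estimates in uniformly local spaces, with the finite $L^1_{\rm ul}$-norm sufficing in the subcritical case $r > N/2$ (since $1+1/r < 1+2/N$) and the critical-case smallness obtained by splitting via the density of $BUC$ in $\calL^1_{\rm ul}(\RN)$. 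Reversing the transformation then shows $u^{*}$ is a supersolution of (\ref{S1E1}), and passage to the limit yields a nonnegative mild solution.

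For nonexistence (part (ii)), under $f'' \ge 0$, $q < 1+N/2$ and $0 < r < N/2$, I would construct a radially decreasing $u_0$ concentrated near the origin with profile calibrated so that $F(u_0)^{-r} \sim |x|^{-\alpha}$ for some $\alpha < N$, ensuring $F(u_0)^{-r} \in L^1_{\rm ul}$. Assuming a nonnegative solution $u$ exists on $[0,T)$, I would fix $\tau \in (0,T)$, set $\bar u(y,t) := (e^{(\tau-t)\Delta} u(\cdot,t))(y)$, and apply Jensen's inequality (justified by $f'' \ge 0$) to deduce $\partial_t \bar u(y,t) \ge f(\bar u(y,t))$; integrating this scalar ODE gives $F(\bar u(y,0)) \ge \tau$. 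But $\bar u(0,0) = (e^{\tau\Delta}u_0)(0)$ can be made so large, by choosing the concentration scale and exploiting the asymptotics of $F^{-1}$ near $0$ dictated by $q$, that $F(\bar u(0,0)) < \tau$, giving the required contradiction.

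The main obstacle is the tight calibration in the nonexistence step: the singularity exponent $\alpha$ must simultaneously keep $F(u_0)^{-r}$ integrable at the scale of $L^1_{\rm ul}$ and push the Gaussian mean $(e^{\tau\Delta}u_0)(0)$ past the ODE threshold produced by Jensen's inequality, which requires sharp asymptotic control on $F^{-1}$ near $0$ through the exponent $q$. A secondary technical subtlety in the existence part is verifying that $u^{*} = F^{-1}(W^{-1/r})$ satisfies the supersolution inequality uniformly (not just where $V'' \ge 0$ is guaranteed by (\ref{S1P2E1})), which relies on monotonicity of $F^{-1}$ and a regularization of $W$ away from $t = 0$.
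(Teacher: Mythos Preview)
Your proposal is correct and follows the change-of-variables route of Fujishima--Ioku~\cite{FI18}, which is precisely where the paper imports Proposition~\ref{S1P2} from (the paper does not reprove it). The paper even remarks that ``in~\cite{FI18} a change of variables was used to construct a supersolution, and~(\ref{S1P2E1}) was necessary,'' so your scheme is the intended one for this statement.

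For existence, the paper's own machinery (built for the stronger Theorem~\ref{THA}) is genuinely different. Rather than solving the auxiliary pure-power problem for $W$ and pulling back via $u^{*}=F^{-1}(W^{-1/r})$, it writes the supersolution directly as $\bar u(t)=J^{-1}\bigl((1+\sigma)S(t)J(u_1)\bigr)$ with $J(u)=F(u)^{-r}$ and verifies the integral inequality by Jensen's inequality for $S(t)$ together with $L^p$--$L^q$ estimates (Theorem~\ref{TH1}). This bypasses the intermediate nonlinear problem and, crucially, is what lets the paper drop~(\ref{S1P2E1}) in Theorem~\ref{THA}~(i) and~(iii). Your approach, by contrast, needs $V''\ge 0$ everywhere the supersolution ranges, which is exactly why~(\ref{S1P2E1}) is assumed in Proposition~\ref{S1P2}.

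For nonexistence the two approaches coincide: Section~10 of the paper takes $u_0(x)=F^{-1}(|x|^{\alpha})$ with $2<\alpha<N/r$, matching your profile. One simplification worth noting: you do not need asymptotics of $F^{-1}$ through $q$. Since $f'\ge 0$ forces $(F^{-1})''=f'(F^{-1})\,f(F^{-1})\ge 0$, Jensen gives $(S(\tau)u_0)(0)\ge F^{-1}\bigl(C\tau^{\alpha/2}\bigr)$, and $\alpha>2$ makes $C\tau^{\alpha/2}<\tau$ for small $\tau$, contradicting $F\bigl((S(\tau)u_0)(0)\bigr)\ge\tau$ immediately. The hypothesis $q\le 1+r$ is used only to guarantee $u_0\in L^1_{\rm ul}(\RN)$ so that $S(t)u_0$ is defined (cf.\ Section~10).
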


\noindent
\begin{remark}\label{S1R0}
\mbox{}
\begin{enumerate}
\item In Proposition~\ref{S1P2}~(ii) we can take $u_0\in\mathcal{L}^1_{\rm ul}(\RN)$.
See the proof of \cite[Theorem~1.2]{FI18} for details.
\item Proposition~\ref{S1P2} shows that, for each $f\in X_q$ with (\ref{S1P2E1}), an optimal integrability condition is
\[
F(u_0)^{-N/2}\in\calL^1_{\rm ul}(\RN).
\]
When $f(u)=u^p$, then $F(u)^{-N/2}=(p-1)^{N/2}u^{N(p-1)/2}$ for $u>0$.
Therefore, the case $r=N/2$ is a critical case which corresponds to Proposition~\ref{S1P1}~(i)~(b).
\item Let $q_{S}:=(N+2)/4$ be the conjugate exponent of the critical Sobolev exponent $(N+2)/(N-2)$.
In \cite{M18} a radial singular stationary solution $u^*(x)$ of (\ref{S1E1}) near the origin was constructed if $f\in X_q$ with $q<q_{S}$.
Moreover, $u^*$ is unique under a certain assumption on $f$ (see \cite{MN18,MN20}) and
\[
u^*(x)=F^{-1}\left(\frac{|x|^2}{2N-4q}(1+o(1))\right)
\ \ \textrm{as}\ \ |x|\to 0.
\]
Since $F(u^*)^{-r}\not\in L^1_{\rm ul}(\RN)$ for $r\ge N/2$ and $F(u^*)^{-r}\in L^1_{\rm ul}(\RN)$ for $r<N/2$, $u^*$ is on a border between Proposition~\ref{S1P2} (i) and (ii).
\end{enumerate}
\end{remark}

Let
\[
S(t)[\phi](x):=\int_{\RN}K(x,y,t)\phi(y)dy\ \ \textrm{for}\ \ \phi\in L^1_{\textrm{ul}}(\RN),
\]
where $K(x,y,t):=(4\pi t)^{-N/2}\exp\left(-{|x-y|^2}/{4t}\right)$.
Then, $S(t)\phi$ gives a solution of the Cauchy problem of the linear heat equation $\partial_tu=\Delta u$ with the initial function $\phi(x)$.
We define a solution of (\ref{S1E1}).
\begin{definition}\label{S1D1}
We call $u(t)$ a solution of (\ref{S1E1}) if there exists $T>0$ such that $u(t)\in L^{\infty}((0,T),L^1_{\rm ul}(\RN)) \cap L_{\rm loc}^{\infty}((0,T),L^{\infty}(\RN))$ and $u$ satisfies
\begin{equation}\label{S1D1E1}
\infty>u(t)=\calF[u(t)]\ \ \textrm{for a.e.\ $x\in\RN$, $0<t<T$,}
\end{equation}
where
\[
\calF[u(t)]:=S(t)u_0+\int_0^tS(t-s)f(u(s))ds.
\]
We call a measurable finite almost everywhere function $\bu:\ \RN\times(0,T)\to\R$ a supersolution for (\ref{S1E1}) if there exists $T>0$ such that $\bu(t)\ge\calF[\bu(t)]$ for a.e. $x\in\RN$, $0<t<T$.
\end{definition}
The first result is a generalization of Proposition~\ref{S1P2}~(i).
Specifically, the technical assumption (\ref{S1P2E1}) can be removed as the following (i) and (iii) show.
\begin{maintheorem}\label{THA}
Let $u_0\ge0$ and $f\in X_q$.
Then (\ref{S1E1}) has a local in time nonnegative solution $u(t)$, $0<t<T$, if one of the following holds:
\begin{enumerate}
\item (Subcritical case 1) $r>N/2$, $q<1+r$  and $F(u_0)^{-r}\in L^1_{\rm ul}(\RN)$.
\item (Subcritical case 2) $r>N/2$, $q=1+r$, $F(u_0)^{-r}\in L^1_{\rm ul}(\RN)$ and (\ref{S1P2E1}) holds.
\item (Critical case) $r=N/2$, $q<1+r$ and $F(u_0)^{-r}\in \calL^1_{\rm ul}(\RN)$.
\end{enumerate}
Moreover, in all cases, there exists $C>0$ such that
\begin{equation}\label{THAE0}
\left\|F(u(t))^{-r}\right\|_{L^1_{\rm ul}(\RN)}\le C\ \ \textrm{for}\ 0<t<T.
\end{equation}
\end{maintheorem}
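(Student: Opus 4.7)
My plan is to produce the solution by monotone iteration, following the approach of \cite{FI18}; the novelty is a more careful choice of supersolution that removes the technical hypothesis (\ref{S1P2E1}) in cases (i) and (iii). Set
\[
v(t,x):=S(t)[F(u_0)^{-r}](x),
\]
and consider the candidate $\bar u(t,x):=F^{-1}(\kappa(t)\,v(t,x)^{-1/r})$, where $\kappa:[0,T)\to(0,1]$ is continuous and non-increasing with $\kappa(0)=1$, to be chosen later; then $\bar u(\cdot,0)=u_0$. A direct computation yields, with $u:=F^{-1}(s^{-1/r})$,
\[
\frac{d^{2}}{ds^{2}}F^{-1}(s^{-1/r})=\frac{f(u)}{r^{2}s^{2+1/r}}\bigl(f'(u)F(u)-(1+r)\bigr),
\]
so the map $G:s\mapsto F^{-1}(s^{-1/r})$ is concave on the set where $f'(u)F(u)\le 1+r$. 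Under (i) or (iii), the hypothesis $q<1+r$ forces concavity for all sufficiently large $u$; under (ii), (\ref{S1P2E1}) gives concavity for all large $u$ as well. A separate modification of $f$ on a bounded interval handles the small-$u$ range without affecting the conclusion.

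Once $G$ is concave (after this modification), Jensen's inequality for the probability measure induced by the heat kernel gives
\[
S(t)u_{0}=S(t)\bigl[G(F(u_{0})^{-r})\bigr]\le G(v(t))=F^{-1}(v(t)^{-1/r}),
\]
so for $\kappa\le 1$ the linear part of $\calF[\bar u]$ is already dominated by $\bar u$. The nonlinear part $\int_{0}^{t}S(t-s)f(\bar u(s))\,ds$ is estimated via the identity $f(F^{-1}(\xi))=-(F^{-1})'(\xi)$ and $L^{p}$--$L^{q}$ estimates for $S(t)$ on $L^{r}_{\mathrm{ul}}(\RN)$, producing an upper bound in terms of powers of $\|v(s)\|_{L^{1}_{\mathrm{ul}}(\RN)}$. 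The supersolution inequality $\bar u\ge\calF[\bar u]$ then reduces to a differential inequality for $\kappa(t)$: in (i) and (iii) the strict slack $q<1+r$ yields a contractive estimate solvable on some $(0,T)$; in (ii) no slack is available, but (\ref{S1P2E1}) can be invoked through a Gronwall-type argument. In the critical case (iii), where $\|v(t)\|_{L^{1}_{\mathrm{ul}}(\RN)}$ is not automatically small for small $t$, the smallness property $\|v(t)-F(u_{0})^{-r}\|_{L^{1}_{\mathrm{ul}}(\RN)}\to 0$ as $t\to 0$ guaranteed by $F(u_{0})^{-r}\in\calL^{1}_{\mathrm{ul}}(\RN)$ is essential.

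With $\bar u$ in hand, the monotone iteration $u_{1}:=S(t)u_{0}$, $u_{n+1}:=\calF[u_{n}]$ is non-decreasing by the monotonicity of $f$ and bounded above by $\bar u$ by induction on $n$. The pointwise limit $u:=\lim u_{n}$ satisfies $u=\calF[u]$ by monotone convergence, and the regularity demanded by Definition~\ref{S1D1} is inherited from the pointwise bound $u\le F^{-1}(\kappa(t)v(t)^{-1/r})$ together with the heat semigroup estimates. The bound (\ref{THAE0}) is immediate from $F(u)\ge F(\bar u)=\kappa(t)v(t)^{-1/r}$ combined with the uniform control of $\|v(t)\|_{L^{1}_{\mathrm{ul}}(\RN)}$. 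The main obstacle I expect is the critical case (iii): the concavity of $G$ is valid only for large arguments, forcing a careful pairing with the $\calL^{1}_{\mathrm{ul}}$-smallness, and the choice of $\kappa(t)$ must close the differential inequality without any available power slack in the $v$-estimate.
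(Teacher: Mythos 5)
Your overall architecture --- a supersolution of the form $J^{-1}(\text{amplification}\times S(t)J(u_0))$ with $J(u)=F(u)^{-r}$, Jensen's inequality for the concave inverse $G=J^{-1}$, and monotone iteration --- coincides with the paper's strategy (its Theorem~\ref{TH1}), and your second-derivative computation for $G$ matches the paper's convexity criterion $f'(u)F(u)\le 1+r$ for $J$. The endgame (monotone iteration bounded by $\bar u$, monotone convergence, and deducing (\ref{THAE0})) is also as in the paper.

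The gap sits in the only step that actually encodes the hypothesis $q<1+r$: the estimate of $\int_0^tS(t-s)f(\bar u(s))\,ds$. You propose to bound it ``in terms of powers of $\|v(s)\|_{L^1_{\rm ul}(\RN)}$'', i.e.\ spatially uniformly, and to close a differential inequality for $\kappa$. This fails in two ways. First, a spatially uniform bound cannot be absorbed by the slack $\bar u(t)-S(t)u_0$, which degenerates wherever $v(t,x)$ is small; and if one instead runs the pointwise supersolution inequality, the requirement becomes $-\kappa'(t)\gtrsim\|v(t)\|_{L^\infty}^{1/r}\gtrsim t^{-N/(2r)}$, whose time integral diverges exactly in the critical case $r=N/2$ --- the $\calL^1_{\rm ul}$-smallness only shrinks the constant in front of $t^{-1}$, not the logarithmic divergence, so case (iii) does not close this way. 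Second, even as a crude sup-norm bound, $\int_0^t\|f(\bar u(s))\|_\infty\,ds$ converges only under roughly $q<2r/N$ (use $f(F^{-1}(\xi))\sim(q-1)F^{-1}(\xi)/\xi$ and $\|v(s)\|_\infty\lesssim s^{-N/2}$), which is far more restrictive than $q<1+r$. The paper's resolution is an interpolation exponent $\theta\in\bigl(\tfrac{q-1}{r},\min\{1,\tfrac{q}{r}\}\bigr)$ --- whose existence is precisely the condition $q<1+r$ (with $\theta=1$ and (\ref{S1P2E1}) in case (ii)) --- together with the pointwise inequality $S(t-s)\bigl[J(\bu(s))^\theta\bigr]\le J(\bu(t))^\theta$ (Jensen for the concave power plus the semigroup property), which keeps the spatial profile $J(\bu(t))^\theta$ in the Duhamel bound and pulls out only the monotone ratio $f/J^{\theta}$ in sup norm; after the change of variables $\tau=J^{-1}(Cs^{-N/2}\cdots)$ the time integral becomes $\tJ(\eta)\int_\eta^\infty\tf(\tau)J'(\tau)J(\tau)^{-1-2/N}\,d\tau$, which tends to $0$ in cases (i)--(ii) and is bounded and multiplied by an arbitrarily small $C_*^{2/N}$ in case (iii). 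This device is absent from your sketch and is what lets a constant amplification $(1+\sigma)$ (rather than your $\kappa$ with $\kappa(0)=1$) suffice. A further minor point: modifying $f$ near $u=0$ to restore concavity of $G$ is more delicate than the paper's device of replacing $u_0$ by $\max\{u_0,C_1,1,\xi\}$, since your modified $f$ must dominate the original to preserve the supersolution property while leaving $F$ unchanged on the relevant range.
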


\begin{remark}\label{S1R1}
\mbox{}
\begin{enumerate}
\item Theorem~\ref{THA}~(i) with $q>1$ was proved in \cite[Theorem 1.4]{GM21} and Theorem~\ref{THA}~(ii) is included in Proposition~\ref{S1P2}~(i).
However, in this paper we prove three cases in a unified way, using a different approach.
See the proof of Theorem~\ref{TH1} for details.
\item If $q<1+r$ or if $q=1+r$ with (\ref{S1P2E1}), then $F(u)^{-r}$ is convex for large $u>0$.
Therefore, $F(u_0)^{-r}\in L^1_{\rm ul}(\RN)$ always implies $u_0\in L^1_{\rm ul}(\RN)$ and $S(t)u_0$ is well defined.
\item If $q= 1+r$ and (\ref{S1P2E1}) does not hold, then $F(u)^{-r}$ may be nonconvex in $u$, and $F(u_0)^{-r}\in L^1_{\rm ul}(\RN)$ does not necessarily imply $u_0\in L^1_{\rm ul}(\RN)$.
Hence the case $q=1+r$ is critical in some sense.
\item Using the method used in the proof of \cite[Theorem~1]{W86}, we see that Proposition~\ref{S1P2}~(ii) also holds even if we adopt Definition~\ref{S1D1}.
Theorem~\ref{THA} and Proposition~\ref{S1P2}~(ii) complete a classification of the existence and nonexistence problem for $f\in X_q$ in a reasonable region $\{(q,r)|\ 1\le q\le 1+r,\ r>0\}$ except $(q,r)=(1+N/2,N/2)$.
\end{enumerate}
\end{remark}
\bigskip

{
\begin{figure}[tbp]\label{fig1}
\centering \leavevmode
{\unitlength 0.1in%
\begin{picture}(29.5000,22.4500)(1.3000,-23.0000)%
%
\special{pn 8}%
\special{pa 290 2010}%
\special{pa 3080 2010}%
\special{fp}%
\special{sh 1}%
\special{pa 3080 2010}%
\special{pa 3013 1990}%
\special{pa 3027 2010}%
\special{pa 3013 2030}%
\special{pa 3080 2010}%
\special{fp}%
%
\special{pn 8}%
\special{pa 600 2300}%
\special{pa 600 100}%
\special{fp}%
\special{sh 1}%
\special{pa 600 100}%
\special{pa 580 167}%
\special{pa 600 153}%
\special{pa 620 167}%
\special{pa 600 100}%
\special{fp}%
%
\special{pn 8}%
\special{pa 2800 210}%
\special{pa 710 2300}%
\special{fp}%
%
\special{pn 8}%
\special{pa 1000 170}%
\special{pa 1000 2010}%
\special{fp}%
%
\special{pn 20}%
\special{pa 1010 1200}%
\special{pa 1810 1200}%
\special{fp}%
%
\special{pn 8}%
\special{pa 1810 1200}%
\special{pa 1810 2010}%
\special{dt 0.045}%
%
\special{pn 8}%
\special{pa 600 1200}%
\special{pa 1000 1200}%
\special{dt 0.045}%
%
\special{pn 8}%
\special{pa 1000 220}%
\special{pa 2800 210}%
\special{ip}%
%
\special{pn 4}%
\special{pa 1890 1110}%
\special{pa 1010 230}%
\special{fp}%
\special{pa 1920 1080}%
\special{pa 1060 220}%
\special{fp}%
\special{pa 1950 1050}%
\special{pa 1120 220}%
\special{fp}%
\special{pa 1980 1020}%
\special{pa 1180 220}%
\special{fp}%
\special{pa 2010 990}%
\special{pa 1240 220}%
\special{fp}%
\special{pa 2040 960}%
\special{pa 1300 220}%
\special{fp}%
\special{pa 2070 930}%
\special{pa 1360 220}%
\special{fp}%
\special{pa 2100 900}%
\special{pa 1420 220}%
\special{fp}%
\special{pa 2130 870}%
\special{pa 1480 220}%
\special{fp}%
\special{pa 2160 840}%
\special{pa 1540 220}%
\special{fp}%
\special{pa 2190 810}%
\special{pa 1600 220}%
\special{fp}%
\special{pa 2220 780}%
\special{pa 1660 220}%
\special{fp}%
\special{pa 2250 750}%
\special{pa 1720 220}%
\special{fp}%
\special{pa 2280 720}%
\special{pa 1780 220}%
\special{fp}%
\special{pa 2310 690}%
\special{pa 1840 220}%
\special{fp}%
\special{pa 2340 660}%
\special{pa 1900 220}%
\special{fp}%
\special{pa 2370 630}%
\special{pa 1950 210}%
\special{fp}%
\special{pa 2400 600}%
\special{pa 2010 210}%
\special{fp}%
\special{pa 2430 570}%
\special{pa 2070 210}%
\special{fp}%
\special{pa 2460 540}%
\special{pa 2130 210}%
\special{fp}%
\special{pa 2490 510}%
\special{pa 2190 210}%
\special{fp}%
\special{pa 2520 480}%
\special{pa 2250 210}%
\special{fp}%
\special{pa 2550 450}%
\special{pa 2310 210}%
\special{fp}%
\special{pa 2580 420}%
\special{pa 2370 210}%
\special{fp}%
\special{pa 2610 390}%
\special{pa 2430 210}%
\special{fp}%
\special{pa 2640 360}%
\special{pa 2490 210}%
\special{fp}%
\special{pa 2670 330}%
\special{pa 2550 210}%
\special{fp}%
\special{pa 2700 300}%
\special{pa 2610 210}%
\special{fp}%
\special{pa 2730 270}%
\special{pa 2670 210}%
\special{fp}%
\special{pa 2760 240}%
\special{pa 2730 210}%
\special{fp}%
\special{pa 1860 1140}%
\special{pa 1000 280}%
\special{fp}%
\special{pa 1830 1170}%
\special{pa 1000 340}%
\special{fp}%
\special{pa 1790 1190}%
\special{pa 1000 400}%
\special{fp}%
\special{pa 1730 1190}%
\special{pa 1000 460}%
\special{fp}%
\special{pa 1670 1190}%
\special{pa 1000 520}%
\special{fp}%
\special{pa 1610 1190}%
\special{pa 1000 580}%
\special{fp}%
\special{pa 1550 1190}%
\special{pa 1000 640}%
\special{fp}%
\special{pa 1490 1190}%
\special{pa 1000 700}%
\special{fp}%
\special{pa 1430 1190}%
\special{pa 1000 760}%
\special{fp}%
\special{pa 1370 1190}%
\special{pa 1000 820}%
\special{fp}%
\special{pa 1310 1190}%
\special{pa 1000 880}%
\special{fp}%
\special{pa 1250 1190}%
\special{pa 1000 940}%
\special{fp}%
\special{pa 1190 1190}%
\special{pa 1000 1000}%
\special{fp}%
\special{pa 1130 1190}%
\special{pa 1000 1060}%
\special{fp}%
\special{pa 1070 1190}%
\special{pa 1000 1120}%
\special{fp}%
%
\special{pn 4}%
\special{pa 1560 1200}%
\special{pa 1000 1760}%
\special{fp}%
\special{pa 1680 1200}%
\special{pa 1000 1880}%
\special{fp}%
\special{pa 1440 1200}%
\special{pa 1000 1640}%
\special{fp}%
\special{pa 1320 1200}%
\special{pa 1000 1520}%
\special{fp}%
\special{pa 1200 1200}%
\special{pa 1000 1400}%
\special{fp}%
\special{pa 1080 1200}%
\special{pa 1000 1280}%
\special{fp}%
%
\special{pn 4}%
\special{sh 1}%
\special{ar 1810 1200 16 16 0 6.2831853}%
\put(10.3000,-21.5000){\makebox(0,0){1}}%
\put(4.7000,-21.4000){\makebox(0,0){$O$}}%
\put(18.3000,-21.6000){\makebox(0,0){1+$\frac{N}{2}$}}%
\put(4.8000,-12.2000){\makebox(0,0){$\frac{N}{2}$}}%
\put(4.7000,-1.2000){\makebox(0,0){$r$}}%
\put(30.6000,-21.4000){\makebox(0,0){$q$}}%
\put(27.4000,-7.3000){\makebox(0,0){$r=q-1$}}%
%
\special{pn 8}%
\special{pa 2000 1320}%
\special{pa 1810 1200}%
\special{fp}%
\special{sh 1}%
\special{pa 1810 1200}%
\special{pa 1856 1253}%
\special{pa 1855 1228}%
\special{pa 1877 1219}%
\special{pa 1810 1200}%
\special{fp}%
\put(19.9000,-14.1000){\makebox(0,0)[lb]{(doubly critical)}}%
\put(10.7000,-17.1000){\makebox(0,0)[lb]{{\colorbox[named]{White}{nonexistence}}}}%
\put(11.4000,-8.6000){\makebox(0,0)[lb]{{\colorbox[named]{White}{existence}}}}%
\put(11.3000,-5.1000){\makebox(0,0)[lb]{{\colorbox[named]{White}{existence(subcritical)}}}}%
\put(11.4000,-10.6000){\makebox(0,0)[lb]{{\colorbox[named]{White}{(critical)}}}}%
%
\special{pn 8}%
\special{pa 1410 1050}%
\special{pa 1410 1190}%
\special{fp}%
\special{sh 1}%
\special{pa 1410 1190}%
\special{pa 1430 1123}%
\special{pa 1410 1137}%
\special{pa 1390 1123}%
\special{pa 1410 1190}%
\special{fp}%
\end{picture}}%
\caption{Theorem~\ref{THA} is for the existence region.
Proposition~\ref{S1P2}~(ii) is for the nonexistence region.
Theorems~\ref{THB} and \ref{THC} are for a doubly critical case.
Theorem~\ref{THD} is an example of a doubly critical case, {\it i.e.,} $f(u)=u^{1+2/N}[\log(u+e)]^{\beta}$.
}
\end{figure}
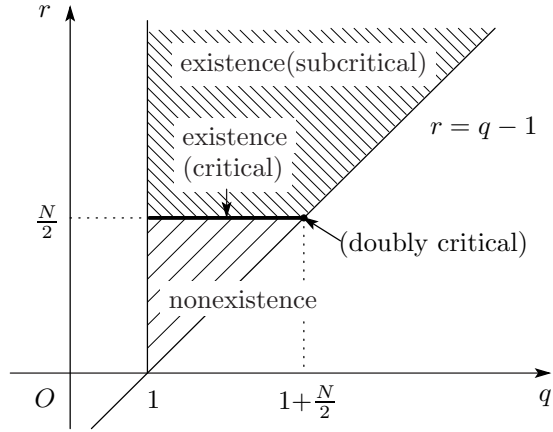
}

Let us consider the case where $(q,r)=(1+N/2,N/2)$.
This case corresponds to the case $r=r_c=1$ in Proposition~\ref{S1P1} and it is not covered by Propositions~\ref{S1P1}, \ref{S1P2} or Theorem~\ref{THA}.
The simplest example is $f(u)=u^{1+2/N}$.
Then, the integrability condition becomes $F(u_0)^{-r}=(2/N)^{N/2}u_0\in\mathcal{L}^1_{\rm ul}(\RN)$.
It is known that there exists a nonnegative initial data $u_0\in L^1(\RN)(\subset\mathcal{L}^1_{\rm ul}(\RN))$ such that (\ref{S1E1}) admits no nonnegative solution.
See \cite{BC96,CZ03,LRSV16,LS20,W86} for nonexistence results.
This case is quite delicate and referred as a doubly critical case in \cite[Section 7.5]{BC96}, since $r=N/2$ and $q=1+r$.
See Figure~\ref{fig1}.
A sufficient condition for existence is recently studied in \cite{M21}.
Combining a nonexistence result \cite{BP85} and an existence result \cite{M21}, we have the following:
\begin{proposition}\label{S1P2+}
Let $f(u)=|u|^{2/N}u$ and 
\[
Z_r:=\left\{
\phi(x)\in L^1_{\rm loc}(\RN)\left|
\int_{\RN}|\phi|\left[\log(|\phi|+e)\right]^rdx<\infty\right.
\right\}.
\]
Then the following hold:
\begin{enumerate}
\item If $u_0\in Z_r$ for some $r\ge N/2$, then (\ref{S1E1}) admits a local in time solution.
\item For each $0\le r<N/2$, there is a nonnegative initial function $u_0\in Z_r$ such that (\ref{S1E1}) admits no nonnegative solution.
\end{enumerate}
\end{proposition}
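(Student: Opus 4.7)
The plan is to treat existence and nonexistence separately. Part~(i) reduces to the borderline $r=N/2$ since $[\log(s+e)]^r$ is nondecreasing in $r$ and hence $Z_r\subset Z_{N/2}$ for every $r\ge N/2$. Writing $p:=1+2/N$ so that $f(u)=u^p$ for $u\ge 0$, I would build a supersolution by monotone iteration
\[
v_0(x,t):=\alpha\,S(t)u_0(x),\qquad v_{n+1}(x,t):=S(t)u_0(x)+\int_0^tS(t-s)\,v_n(s)^{p}\,ds,
\]
for a constant $\alpha>1$ and a short time $T>0$ to be chosen. The scheme is monotone nondecreasing once $\alpha$ is fixed so that $v_1\le v_2$ at $t=0^+$, so an $n$-uniform pointwise bound $v_n\le M(x,t)<\infty$ is enough to produce a mild solution satisfying Definition~\ref{S1D1}.

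The heart of the matter is the uniform dispersive estimate
\[
\sup_{y\in\RN}\int_0^T\!\!\int_{B(y,1)}\bigl(S(t)\phi(x)\bigr)^p\,dx\,dt
\;\le\;\Psi(T)\sup_{y\in\RN}\int_{B(y,2)}\phi(x)\bigl[\log(\phi(x)+e)\bigr]^{N/2}\,dx,
\]
with $\Psi(T)\to 0$ as $T\downarrow 0$, valid for all nonnegative $\phi\in Z_{N/2}$. I would derive it from the Orlicz-type duality between the Young function $s\mapsto s[\log(s+e)]^{N/2}$ and its complementary function $s\mapsto\exp(s^{2/N})-1$, together with Jensen's inequality applied to the Gaussian average $S(t)\phi$; the logarithmic factor in $Z_{N/2}$ is exactly what compensates the borderline $(\log 1/t)$-loss produced when the $p$-th power of the heat kernel is integrated in time at the critical dimension. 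A truncation $u_0=u_0\chi_{\{u_0\le K\}}+u_0\chi_{\{u_0>K\}}$ with $K$ large (making the second piece arbitrarily small in the $Z_{N/2}$-seminorm) combined with classical $L^\infty$-theory for the first piece and the dispersive estimate for the second then closes the iteration for $T$ small.

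For part~(ii), fix $0\le r<N/2$ and pick $\sigma$ in the nonempty interval $(r+1,\,N/2+1]$. Set
\[
u_0(x):=\chi_{\{|x|<1/2\}}(x)\cdot|x|^{-N}\bigl[\log(1/|x|)\bigr]^{-\sigma}.
\]
Polar coordinates and $\log(u_0(x)+e)\sim N\log(1/|x|)$ near the origin give $\int u_0[\log(u_0+e)]^r dx<\infty$ exactly because $\sigma-r>1$, so $u_0\in Z_r$. Suppose for contradiction that a nonnegative solution $u$ of (\ref{S1E1}) exists on $(0,T)\times\RN$. Positivity together with (\ref{S1D1E1}) yield the Duhamel lower bound $u(x,t)\ge S(t)u_0(x)+\int_0^tS(t-s)(S(s)u_0)^p ds$. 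A direct computation of the Gaussian convolution against the profile produces the sharp asymptotic $S(s)u_0(0)\sim c\,s^{-N/2}[\log(1/s)]^{1-\sigma}$ as $s\downarrow 0$; iterating Duhamel (or invoking a Baras--Pierre type test-function argument as in \cite{BP85}) feeds this asymptotic into successive powers and gives $u(0,t)=+\infty$, contradicting the finiteness required in Definition~\ref{S1D1}.

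The main obstacle is the Orlicz/Jensen estimate in part~(i): the exponent $p=1+2/N$ is critical for $L^1_{\rm ul}$, so no $L^q$-with-$q>1$ theory applies, and the delicate logarithmic gain in $Z_{N/2}$ must be tracked precisely through the semigroup bound. In part~(ii), the subtlety is the joint tuning of $\sigma$ in the open interval $(r+1,\,N/2+1]$: small enough that $u_0\in Z_r$, yet large enough that the iterated Duhamel lower bound genuinely diverges at $x=0$.
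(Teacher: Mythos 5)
The paper does not actually prove Proposition~\ref{S1P2+}: it is quoted as a combination of the nonexistence result of Baras--Pierre \cite{BP85} and the existence result of \cite{M21}. Within the paper, however, part (i) is exactly what Theorem~\ref{THB}/Corollary B' delivers for $f(u)=u^{1+2/N}$ with $J_{N/2}(u)\simeq u[\log(u+e)]^{N/2}$ (note $Z_{N/2}\subset L^1(\RN)$, so $J_{N/2}(u_0)\in L^1(\RN)\subset\calL^1_{\rm ul}(\RN)$ up to the closure issue), and part (ii) is Theorem~\ref{THD}~(ii)(a) with $\beta=0$. Your part (ii) follows the same route as the literature and as the paper's Section~4: the profile $|x|^{-N}[\log(1/|x|)]^{-\sigma}$ with $\sigma-r>1$ is the right initial datum and the $Z_r$ membership computation is correct. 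But the step ``iterating Duhamel \dots gives $u(0,t)=+\infty$'' is not substantiated: for your range of $\sigma$ a \emph{single} Duhamel iteration at $x=0$ is finite (the time integral $\int_0^t s^{-1}[\log(1/s)]^{(1-\sigma)p}\,ds$ converges whenever $\sigma>1+N/(N+2)$), so the contradiction only emerges after tracking how the logarithmic exponent degrades through infinitely many iterations, or — as the paper does in Theorem~\ref{S4T1} — through the differential inequality \eqref{eq22+} for the weighted local mass $H(t)$, or through the Baras--Pierre necessary condition you mention in passing. You should also stay in the open interval $\sigma\in(r+1,N/2+1)$; the endpoint is not covered by the cited nonexistence results.

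Part (i) has a genuine gap. First, the ``dispersive'' estimate you call the heart of the matter cannot hold as written: replacing $\phi$ by $\lambda\phi$ scales the left-hand side like $\lambda^{p}$ with $p=1+2/N>1$, while the right-hand side scales like $\lambda[\log\lambda]^{N/2}$, so the inequality fails for large data; at best one can hope for a version with an extra factor $\|\phi\|_{L^1_{\rm ul}}^{p-1}$. Second, and more importantly, even a corrected integrated estimate only controls the \emph{first} Duhamel iterate $\int_0^tS(t-s)(S(s)u_0)^p\,ds$ in an $L^1_{\rm ul}$ sense; it does not show that $v_0=\alpha S(t)u_0$ is a supersolution, which requires the \emph{pointwise} inequality $\int_0^tS(t-s)\bigl(\alpha S(s)u_0\bigr)^p\,ds\le(\alpha-1)S(t)u_0$, nor does it propagate any $Z_{N/2}$-type bound to the later iterates $v_n$, which are no longer of the form $S(t)\phi$. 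Without a supersolution the monotone scheme does not close, and the truncation into a bounded piece plus a small-seminorm piece does not repair this, since the nonlinearity couples the two pieces and ``small $Z_{N/2}$-seminorm'' is not a scaling-invariant smallness. The known way through — used in \cite{M21} and in this paper's Theorem~\ref{TH1} — is the nonlinear supersolution $\bu(t)=J^{-1}\bigl((1+\sigma)S(t)J(u_1)\bigr)$ with $J(u)=u[\log(u+e)]^{N/2}$, where convexity of $J$, Jensen's inequality (Proposition~\ref{S2P4}) and the integral condition \eqref{TH1E2} replace exactly the missing steps in your argument; I recommend you verify \eqref{TH1E2} for this $J$ and $f(u)=u^{1+2/N}$ instead.
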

Since $Z_r\subset Z_{N/2}$ for $r\ge N/2$, by Proposition~\ref{S1P2+} we see that
\[
u_0\in Z_{N/2}
\]
is an optimal integrability condition for the solvability when $f(u)=|u|^{2/N}u$.
In particular, $Z_{N/2}$ is a proper subset of $L^1(\RN)$.

We study existence of a solution in a doubly critical case when $f$ is a general nonlinearity.
The next main theorem is a generalization of Proposition~\ref{S1P2+}~(i).
\begin{maintheorem}[Existence, doubly critical case]\label{THB}
Let $u_0\ge0$ and $q=1+N/2$.
Suppose that $f\in X_q$ holds.
Let
\begin{multline}\label{THBE0}
g(u):=u[\log(u+e)]^{\alpha},\ \ h(u):=F(u)^{-N/2}\\
\textrm{and}\ \
J_{\alpha}(u):=g(h(u))=F(u)^{-N/2}\left[\log\left(F(u)^{-N/2}+e\right)\right]^{\alpha}.
\end{multline}
Then (\ref{S1E1}) admits a local in time nonnegative solution $u(t)$, $0<t<T$, if one of the following holds:
\begin{enumerate}
\item There exists $\alpha>N/2$ such that $J_{\alpha}(u_0)\in L^1_{\rm ul}(\RN)$ and $J_{\alpha}(u)$ is convex for large $u>0$, {\it i.e.}, $J''_{\alpha}(u)\ge 0$ for large $u>0$.
\item $J_{\alpha}(u_0)\in\calL^1_{\rm ul}(\RN)$ for $\alpha=N/2$ and there exists $\rho<1$ such that
\begin{equation}\label{TH3E1}
f'(u)F(u)-q\le\frac{N\alpha}{2}\cdot\frac{\rho}{\log\left(F(u)^{-N/2}+e\right)}\ \ \textrm{for large}\ u>0.
\end{equation}
\end{enumerate}
Moreover, in two cases, there exists $C>0$ such that
\begin{equation}\label{TH3E1+}
\left\|J_{\alpha}(u(t))\right\|_{L^1_{\rm ul}(\RN)}\le
C
\ \ \textrm{for}\ 0<t<T.
\end{equation}
\end{maintheorem}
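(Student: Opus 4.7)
The strategy is the monotone iteration method used throughout the paper. Set $u_1(t):=S(t)u_0$ and $u_{k+1}:=\calF[u_k]$; since $f$ is nondecreasing, the sequence $\{u_k\}$ is nondecreasing pointwise, so it suffices to establish a uniform-in-$k$ a priori bound of the form (\ref{TH3E1+}) on a fixed small interval $(0,T)$. The monotone convergence theorem will then yield a nonnegative solution $u=\lim_{k\to\infty}u_k\in L^\infty((0,T),L^1_{\rm ul}(\RN))\cap L^\infty_{\rm loc}((0,T),L^\infty(\RN))$, and (\ref{TH3E1+}) will transfer to the limit by Fatou.

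The heart of the argument is to derive a closed-in-itself inequality for $\phi_k(t):=\|J_\alpha(u_k(t))\|_{L^1_{\rm ul}}$. Apply $J_\alpha$ to the integral identity $u_{k+1}=\calF[u_k]$ and use the convexity of $J_\alpha$ on $[M,\infty)$ together with Jensen's inequality to pull $J_\alpha$ through both $S(t)$ and the time convolution. From $h'(u)=\frac{N}{2f(u)}h(u)^{1+2/N}$ one computes
\[
J_\alpha'(u)f(u)=\tfrac{N}{2}h(u)^{1+2/N}[\log(h(u)+e)]^\alpha\bigl(1+o(1)\bigr)\quad(u\to\infty),
\]
which dominates $J_\alpha(f(u))$ pointwise by a superlinear function of $J_\alpha(u)$ modulo a slowly varying logarithmic correction. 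Taking $L^1_{\rm ul}$ norms and invoking standard $L^p$-$L^q$ smoothing of $S(t)$ closes the recurrence into a Gronwall-type integral inequality of the form $\phi_{k+1}(t)\le C\phi_1(0)+C\int_0^t\Psi(\phi_k(s))\,ds$, which is solvable uniformly in $k$ on a short interval once $\phi_1(0)$ is under control.

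The two subcases differ in how the initial size is controlled and how the convexity defect is handled. In case~(i), $\alpha>N/2$ together with the assumption $J_\alpha''\ge 0$ at infinity lets the estimate close for $J_\alpha(u_0)\in L^1_{\rm ul}$ of arbitrary finite size. In the borderline case~(ii), $J_\alpha$ is only approximately convex, the defect being quantitatively controlled by the factor $\rho<1$ in (\ref{TH3E1}); the hypothesis $J_\alpha(u_0)\in\calL^1_{\rm ul}$ permits splitting $u_0$ into a $BUC$ part plus a part with arbitrarily small $L^1_{\rm ul}$-tail, which provides exactly the smallness needed to absorb the convexity defect into the main term. The main obstacle I expect is carrying out case~(ii) rigorously: quantifying the failure of convexity of $J_\alpha$ cleanly enough that the Jensen step retains a usable error, and matching this error against the $\calL^1_{\rm ul}$-smallness of the singular part of $u_0$, so that the iteration closes on a $k$-independent time interval $(0,T)$.
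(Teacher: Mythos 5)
Your overall plan (monotone iteration plus a uniform a priori bound, which by Proposition~\ref{S2P1} is equivalent to producing a supersolution) is in the right family, but the specific mechanism you propose does not close, and it is not the one the paper uses. The paper reduces Theorem~\ref{THB} to the abstract Theorem~\ref{TH1}, whose proof builds the \emph{explicit} supersolution $\bu(t)=J_\alpha^{-1}\bigl((1+\sigma)S(t)J_\alpha(u_1)\bigr)$; the whole estimate then hinges on the pointwise bound $\|\bu(s)\|_\infty\le J_\alpha^{-1}(Cs^{-N/2}\|J_\alpha(u_1)\|_{L^1_{\rm ul}})$ and on the convergence (case (i)) or boundedness (case (ii)) of the borderline integral $\tJ(\eta)\int_\eta^\infty \tf(\tau)J_\alpha'(\tau)J_\alpha(\tau)^{-1-2/N}\,d\tau$, which is exactly where $\alpha>N/2$ versus $\alpha=N/2$ enters. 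Your proposed recurrence $\phi_{k+1}(t)\le C\phi_1(0)+C\int_0^t\Psi(\phi_k(s))\,ds$ with $\phi_k(t)=\|J_\alpha(u_k(t))\|_{L^1_{\rm ul}}$ cannot be derived in that form: $\|J_\alpha(f(u))\|_{L^1_{\rm ul}}$ is not a function of $\|J_\alpha(u)\|_{L^1_{\rm ul}}$ (concentrate $u$ near a point), so after Jensen you are forced to factor out $\|f(u_k(s))/J_\alpha(u_k(s))^{\theta}\|_\infty$, which requires both the eventual monotonicity of $f/J_\alpha^{\theta}$ (verified in the paper from $f'(u)F(u)\to 1+N/2$, and omitted in your sketch) and a sup-norm bound on $u_k(s)$ with the precise rate $J_\alpha^{-1}(Cs^{-N/2})$; the resulting time integral is then logarithmically borderline, and the problem is precisely whether it is finite, not whether a Gronwall ODE has short-time solutions.

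Your description of case (ii) also misidentifies what the hypotheses do. Under \eqref{TH3E1} with $\rho<1$, $J_{N/2}$ \emph{is} convex for large $u$ (the paper's computation \eqref{eq01+}--\eqref{eq02+} with $\hat\rho=1$), so there is no ``convexity defect'' to absorb; the strict inequality $\rho<1$ is instead used, via the monotonicity of $g'(h(u))^{\tilde\rho}h'(u)$ for some $\tilde\rho\in(\rho,1)$, to prove that the critical integral above is bounded (condition \eqref{TH1E2}), and Corollary~\ref{S5C2} shows this fails at $\rho=1$. Likewise, the hypothesis $J_{N/2}(u_0)\in\calL^1_{\rm ul}(\RN)$ is exploited through Proposition~\ref{S2P2}~(ii), which makes the constant $C_*$ in the $L^1_{\rm ul}$--$L^\infty$ smoothing estimate arbitrarily small so that the bounded (but not small) critical integral can be beaten by $\sigma/(1+\sigma)$; it is not used to split $u_0$ into a $BUC$ part plus a small tail inside a Jensen argument. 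As written, the proposal leaves exactly the decisive steps --- the borderline integrability for $\alpha\ge N/2$ and the correct use of $\rho<1$ and of $\calL^1_{\rm ul}$ --- unproved.
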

In Corollary~\ref{S5C1} it will be shown that we cannot take $\rho=1$ in Theorem~\ref{THB}~(ii).
Thus, the condition $\rho<1$ is optimal.

Let $q=1+N/2$ and $\alpha\ge N/2$.
We can easily check that if $f\in X_q$ and (\ref{S1P2E1}) hold, then $J_{\alpha}(u)$ is convex for large $u>0$ and (\ref{TH3E1}) holds for $\rho=0$.
Therefore, Theorem~\ref{THB} immediately leads to the following simple sufficient condition:
\smallskip

\noindent
{\bf Corollary B'}
{\it
Let $u_0\ge 0$ and $q=1+N/2$.
Suppose that $f\in X_q$ and (\ref{S1P2E1}) hold.
Then (\ref{S1E1}) admits a local in time nonnegative solution if $J_{\alpha}(u_0)\in L^1_{\rm ul}(\RN)$ for some $\alpha>N/2$ or $J_{\alpha}(u_0)\in\calL^1_{\rm ul}(\RN)$ for $\alpha=N/2$.
}
\bigskip

We study nonexistence of a solution in a doubly critical case.
For $\beta\in\R$, we define
\[
f_{\beta}(u):=u^{1+2/N}\left[\log(u+e)\right]^{\beta},\ \ F_\beta(u):=\int_u^\infty 
\frac{d\tau}{f_\beta(\tau)}\ \ \textrm{and}\ \ h_\beta(u):=F_\beta(u)^{-N/2}.
\]

\begin{maintheorem}[Nonexistence, doubly critical case]\label{THC}
Let $u_0\ge 0$.
Suppose that $f$ satisfies (\ref{f}) and 
there exist $C_1>0$, $C_2>0$, $\beta>0$ and $0<\delta<1$ such that
the following hold:
\begin{enumerate}
\item $F^{-1}\circ F_\beta$ is convex on $[C_1,\infty)$, {\it i.e.}, 
$f'(F^{-1}(v))\ge f'_\beta(F^{-1}_\beta(v))$ for $0<v\le F_\beta(C_1)$.
\item $F(u)\le C_2 u^{-2/N}\left[\log(u+e)\right]^{\delta}$ for $u\ge C_1$.
\end{enumerate}
Let $J_{\alpha}(u):=F(u)^{-N/2}\left[\log\left(F(u)^{-N/2}+e\right)\right]^{\alpha}$.
For each $\alpha\in [0,N/2)$, there exists a nonnegative function $u_0\in L^1_{\rm ul}(\RN)$ satisfying $J_{\alpha}(u_0)\in L^1_{\rm ul}(\RN)$ such that, for every $T>0$, (\ref{S1E1}) admits no nonnegative solution.
\end{maintheorem}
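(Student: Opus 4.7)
The plan is to prove nonexistence for the general $f$ by reducing to an explicit singular initial datum for the model nonlinearity $f_\beta$ via the pointwise substitution $\Phi:=F^{-1}\circ F_\beta$. A direct calculation yields
\[
\Phi'(v)=\frac{f(\Phi(v))}{f_\beta(v)},\qquad \Phi''(v)=\frac{\Phi'(v)}{f_\beta(v)}\bigl(f'(\Phi(v))-f_\beta'(v)\bigr),
\]
so condition (i) is exactly convexity of $\Phi$ on $[C_1,\infty)$, and one also has the crucial identity $f(\Phi(v))=\Phi'(v)f_\beta(v)$.

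I would construct an explicit nonnegative, radially symmetric $v_0$ singular only at the origin, of the shape $v_0(x)=c\,\chi(|x|)\,|x|^{-N}[\log(1/|x|)]^{-\gamma}$ for $|x|$ small, with $\gamma=\gamma(\alpha,\beta)>1$ chosen so that, writing $J_\alpha^\beta(v):=h_\beta(v)[\log(h_\beta(v)+e)]^\alpha$,
\[
J_\alpha^\beta(v_0)\in L^1_{\rm ul}(\RN)\qquad\text{and}\qquad \int_0^t \bigl[S(t-s)f_\beta(S(s)v_0)\bigr](0)\,ds=+\infty\quad\text{for every }t>0.
\]
The first bound follows from $h_\beta(v)\sim (2/N)^{N/2}v[\log v]^{N\beta/2}$ at infinity; the divergent-iterate bound rests on the heat-kernel lower estimate $S(s)v_0(0)\gtrsim s^{-N/2}[\log(1/s)]^{-(\gamma-1)}$ combined with the explicit growth of $f_\beta$, and the hypothesis $\alpha<N/2$ is precisely what opens a nonempty window of admissible $\gamma$. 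I then define $u_0:=\Phi(v_0)$; by construction $F(u_0)=F_\beta(v_0)$, so $J_\alpha(u_0)=J_\alpha^\beta(v_0)\in L^1_{\rm ul}(\RN)$ automatically. The bound $u_0\in L^1_{\rm ul}(\RN)$ is verified by inverting condition (ii): using the monotonicity of $F$, the inequality $F(u)\le C_2 u^{-2/N}[\log(u+e)]^\delta$ translates to $F^{-1}(w)\lesssim w^{-N/2}[\log(1/w)]^{N\delta/2}$ as $w\to 0$, and composing with $F_\beta(v_0(x))$ gives $u_0(x)\lesssim |x|^{-N}[\log(1/|x|)]^{-\gamma+N(\beta+\delta)/2}$ near the origin; the assumption $\delta<1$ together with $\alpha<N/2$ is exactly what permits a choice of $\gamma$ making this integrable while keeping the iterate divergent.

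Finally, I would argue by contradiction. Suppose $u\ge 0$ is a mild solution of (\ref{S1E1}) with initial datum $u_0$ in the sense of Definition~\ref{S1D1}. Since $v_0\ge C_1$ on a small neighbourhood of the origin (after replacing $v_0$ by $\max(v_0,C_1)$ on that set, which is harmless), Jensen's inequality for the heat semigroup applied on the convex range of $\Phi$ yields $S(t)u_0\ge\Phi(S(t)v_0)$ pointwise there. Combined with $f(\Phi(v))=\Phi'(v)f_\beta(v)$ and the lower bound $\Phi'(v)\ge\Phi'(C_1)>0$ for $v\ge C_1$ (from convexity), this gives
\[
f(u(s,x))\ge f(\Phi(S(s)v_0(x)))\ge\Phi'(C_1)\,f_\beta(S(s)v_0(x))
\]
on the space-time set $\{S(s)v_0\ge C_1\}$. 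Substituting into the Duhamel inequality $u(t)\ge\int_0^t S(t-s)f(u(s))\,ds$ and evaluating at $x=0$ forces
\[
u(t,0)\ge\Phi'(C_1)\int_0^t \bigl[S(t-s)f_\beta(S(s)v_0)\bigr](0)\,ds=+\infty,
\]
contradicting the a.e.\ finiteness of $u(t)$ required by Definition~\ref{S1D1}.

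The main obstacle is the quantitative estimate in the first step: establishing divergence of the Duhamel iterate for $f_\beta$ with the polynomial-logarithmic $v_0$ requires carefully tracking three logarithmic factors (one from $v_0$, one from the heat smoothing, one from $f_\beta$) and pinning down the admissible range of $\gamma(\alpha,\beta)$ for every $\alpha\in[0,N/2)$. A secondary subtlety is that condition (i) only supplies convexity of $\Phi$ on $[C_1,\infty)$, so the Jensen step must be confined to the neighbourhood of the origin where $S(s)v_0\ge C_1$; since that is exactly the region driving the divergence, the localisation is harmless but must be stated carefully.
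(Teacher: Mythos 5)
Your overall reduction scheme is the same as the paper's: transplant an explicit singular datum for the model nonlinearity $f_\beta$ through $\Phi=F^{-1}\circ F_\beta$, use convexity of $\Phi$ (hypothesis (i)) plus Jensen to compare the two problems, and invert hypothesis (ii) to get $u_0\in L^1_{\rm ul}(\RN)$. The gap is in the engine you propose for the $f_\beta$ nonexistence: divergence of the \emph{first} Duhamel iterate does not hold on the whole range $\alpha\in[0,N/2)$, so the "nonempty window of admissible $\gamma$" you assert does not exist in general. Concretely, with $v_0(x)=|x|^{-N}[\log(1/|x|)]^{-\gamma}$ near the origin one has $S(s)v_0(0)\gtrsim s^{-N/2}[\log(1/s)]^{1-\gamma}$, hence $f_\beta(S(s)v_0)\gtrsim s^{-(N+2)/2}[\log(1/s)]^{(1-\gamma)(1+2/N)+\beta}$ on $B(0,\sqrt s)$, and the dominant contribution to the iterate is
\[
\int_0^{t/2}\bigl[S(t-s)f_\beta(S(s)v_0)\bigr](0)\,ds
\;\gtrsim\; t^{-N/2}\int_0^{t/2}s^{-1}\bigl[\log(1/s)\bigr]^{(1-\gamma)(1+2/N)+\beta}\,ds,
\]
which diverges if and only if $(\gamma-1)(1+2/N)\le \beta+1$, i.e.\ $\gamma\le 1+\tfrac{N(\beta+1)}{N+2}$. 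On the other hand $J^\beta_\alpha(v_0)\sim c\,|x|^{-N}[\log(1/|x|)]^{-\gamma+N\beta/2+\alpha}$ is locally integrable only if $\gamma>1+\alpha+\tfrac{N\beta}{2}$. These two constraints are compatible only when $\alpha<\tfrac{N}{N+2}-\tfrac{N^2\beta}{2(N+2)}$, which is strictly smaller than $N/2$ for every $N\ge1$ (and is even negative for $N\ge 2$, $\beta\ge 1$). So your method proves nothing for $\alpha$ close to $N/2$, precisely the delicate part of the statement.

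The paper avoids this by not relying on the first iterate at all. Its Theorem~\ref{S4T1} runs a Baras--Pierre type argument: it introduces the weighted mass $w(t)=\int u(x,t+4\rho^2)G(x,t)\,dx$, derives the integral inequality (\ref{eq16}) and then the nonlinear differential inequality (\ref{eq22+}) for $H(t)$, which effectively sums the whole Duhamel iteration. Integrating that inequality yields the sharp necessary condition $\int_{B(0,\rho)}u_0\lesssim[\log(1/\rho)]^{-N(\beta+1)/2}$ for existence, and the explicit datum (\ref{S4E1}) violates it by a factor $[\log(1/\rho)]^{\e}$ for every $\e\in(0,N/2-\alpha)$, which is what covers all $\alpha\in[0,N/2)$. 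To repair your proposal you would need to replace the first-iterate computation by such a nonlinear (iterated) necessary condition; the remaining steps of your argument (the identity $f(\Phi(v))=\Phi'(v)f_\beta(v)$, the Jensen comparison $S(t)\Phi(v_0)\ge\Phi(S(t)v_0)$, and the $L^1_{\rm ul}$ bound on $u_0$ from hypothesis (ii)) are sound and essentially identical to the paper's. A minor additional point: evaluating the divergence only at the single point $x=0$ does not by itself contradict a.e.\ finiteness; one must show divergence on a set of positive measure or test against the kernel, as the paper does.
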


\begin{remark}\label{S1R3}
\mbox{}
\begin{enumerate}
\item If $f$ satisfies (\ref{f}) and $f'(u)F(u)\ge1+N/2$ for large $u>0$, then for each $\beta>0$, the assumption of Theorem~\ref{THC}~(i) holds by taking $C_1>0$ sufficiently large.
\item A characterization of $f$ for existence and nonexistence of a solution in $\mathcal{L}^r_{\rm ul}(\RN)$, $r\ge 1$, is given in Corollary~\ref{S9C1} and Theorem~\ref{S9T3}.
This characterization is the same one as (\ref{LRSV}) which was obtained in \cite[Corollary~4.5 and Theorem~3.4]{LRSV16}.
\end{enumerate}
\end{remark}
We study Problem (B) in Corollaries~\ref{S3C1}, \ref{S3C2}, \ref{S4C1} and \ref{S4C2}.
These corollaries give existence and nonexistence conditions on $f$ when integrability conditions on $u_0$ are given.
These corollaries are not optimal, and could be improved.
A threshold growth and a threshold integrability can be summarized as Table~\ref{tab1}.

\begin{table}[t]\label{tab1}
\caption{Relationship between a threshold growth and a threshold integrability. Here, $g_{\frac{N}{2}}(u)=u[\log(u+e)]^{N/2}$, $q=\lim_{u\to\infty}f'(u)F(u)$ and $q_J=\lim_{u\to\infty}J'(u)^2/J(u)J''(u)$.}
\begin{tabular}{|c|c|ccc|c|c|}
\hline
& problem & growth & & integrability & existence & nonexistence\\
\hline
\hline
$1\le q<1+\frac{N}{2}$ & (A) & $f(u)$ & $\!\!\rightarrow\!\!$ & $F(u_0)^{-\frac{N}{2}}\in\mathcal{L}^1_{\rm ul}(\RN)$ & Thm~\ref{THA} & Prop~\ref{S1P2}(ii)\rule[-2mm]{0mm}{6mm}\\
$1\le q_J<\infty$ & (B) & $\frac{J(u)^{1+\frac{2}{N}}}{J'(u)}$ & $\!\!\leftarrow\!\!$ & $J(u_0)\in\mathcal{L}^1_{\rm ul}(\RN)$ & Cor~\ref{S3C1} & Cor \ref{S4C1}\rule[-3mm]{0mm}{0mm}\\
\hline
 $q=1+\frac{N}{2}$ & (A) & $f(u)$ & $\!\!\rightarrow\!\!$ & $\!\! g_{\frac{N}{2}}(F(u_0)^{-\frac{N}{2}})\in\mathcal{L}^1_{\rm ul}(\RN)\!\!$ & Thm~\ref{THB} & Thm~\ref{THC} \rule[-2mm]{0mm}{6mm}\\
$q_J=\infty$ & (B) & $\!\!\frac{J(u)^{1+\frac{2}{N}}}{J'(u)\log (J(u)+e)}\!\!$ & $\!\!\leftarrow\!\!$ & $J(u_0)\in\mathcal{L}^1_{\rm ul}(\RN)$ & Cor~\ref{S3C2} & Cor~\ref{S4C2}\rule[-3mm]{0mm}{0mm}\\ 
 \hline 
\end{tabular}
\end{table}

\smallskip
We consider an example of a doubly critical case. Let $f(u)=f_{\beta}(u)$.
An elementary calculation shows that if $\beta\ge -(1+2/N)\kappa$, then $f_{\beta}(u)$ is nondecreasing for $u>0$.
Here, $\kappa$ is the largest positive root of
\begin{equation}\label{kappa}
\log\kappa+2=\kappa,\ \ \textrm{where}\ \ \kappa\simeq 3.146.
\end{equation}

The following theorem is a complete classification of integrability conditions on $u_0$.
\begin{maintheorem}[Classification for $f_\beta$]\label{THD}
Let $u_0\ge 0$, 
\[
f(u)=f_{\beta}(u)\ \ \textrm{and}\ \ J_{\alpha}(u)=F_\beta(u)^{-N/2}\left[\log\left(F_\beta(u)^{-N/2}+e\right)\right]^{\alpha}.
\]
Then the following hold:
\begin{enumerate}
\item (Existence) (\ref{S1E1}) with initial function $u_0\in L^1_{\rm ul}(\RN)$ admits a local in time nonnegative solution if one of the following holds:
\begin{enumerate}
\item $\alpha>N/2$, $\beta\ge-1$ and ${J_{\alpha}}(u_0)\in L^1_{\rm ul}(\RN)$.
\item $\alpha=N/2$, $\beta>-1$ and ${J_{\alpha}}(u_0)\in\calL^1_{\rm ul}(\RN)$.
\item $-(1+2/N)\kappa\le \beta<-1$, where $\kappa$ is given by (\ref{kappa}).
\end{enumerate}
\item (Nonexistence)
\begin{enumerate}
\item Let $\beta>-1$.
For each $\alpha\in [0,N/2)$, there exists a nonnegative function $u_0\in L^1_{\rm ul}(\RN)$ such that $J_{\alpha}(u_0)\in L^1_{\rm ul}(\RN)$ and that (\ref{S1E1}) admits no nonnegative solution.
\item Let $\beta=-1$. 
For each $\alpha\in [0,N/2]$, there exists a nonnegative function $u_0\in\calL^1_{\rm ul}(\RN)$ such that $J_{\alpha}(u_0)\in\calL^1_{\rm ul}(\RN)$ and that (\ref{S1E1}) admits no nonnegative solution.
\end{enumerate}
\end{enumerate}
\end{maintheorem}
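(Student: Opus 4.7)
My plan is to reduce each case of Theorem~\ref{THD} to one of Theorems~\ref{THA}--\ref{THC} after verifying their hypotheses for $f=f_\beta$. The work rests on a single asymptotic: substituting $\tau=ue^t$ in $F_\beta(u)=\int_u^\infty\tau^{-1-2/N}[\log(\tau+e)]^{-\beta}\,d\tau$ and expanding $[\log u+t]^{-\beta}$ in powers of $1/\log u$ yields
\[
F_\beta(u)=\tfrac{N}{2}u^{-2/N}(\log u)^{-\beta}\Bigl(1-\tfrac{\beta N/2}{\log u}+O\bigl((\log u)^{-2}\bigr)\Bigr),\qquad u\to\infty,
\]
which combined with the elementary expansion of $f_\beta'$ gives $f_\beta\in X_q$ with $q=1+N/2$, the refined identity
\[
f_\beta'(u)F_\beta(u)-q\;=\;-\frac{(N/2)^2\beta}{\log u}\bigl(1+o(1)\bigr),
\]
and $h_\beta(u)\sim(2/N)^{N/2}u(\log u)^{\beta N/2}$ with $\log(h_\beta(u)+e)\sim\log u$.

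For the existence part, (i)(a) follows from Theorem~\ref{THB}~(i): the hypothesis on $u_0$ is given, and $J_\alpha(u)\sim Cu(\log u)^{\alpha+\beta N/2}$ with exponent $\alpha+\beta N/2\ge\alpha-N/2>0$ is convex for large $u$ by a short differentiation. For (i)(b) I apply Theorem~\ref{THB}~(ii) with $\alpha=N/2$: the refined identity together with $\log(h_\beta+e)\sim\log u$ reduces \eqref{TH3E1} to the asymptotic inequality $-\beta\le\rho$, which admits $\rho\in(-\beta,1)$ precisely when $\beta>-1$. For (i)(c), the condition $\beta<-1$ makes the Laister-type integral
\[
\int_1^\infty\frac{\tilde f_\beta(u)}{u^{1+2/N}}\,du=\int_1^\infty\frac{(\log u)^\beta}{u}\,du\;<\;\infty,
\]
after noting that $f_\beta(\tau)/\tau$ is eventually increasing, so $\tilde f_\beta(u)=f_\beta(u)/u$ for large $u$; this places the case under the general existence criterion of Corollary~\ref{S9C1}, and the passage from $\calL^1_{\rm ul}$ to $L^1_{\rm ul}$ is supplied by a monotone iteration using the standard bound $\|S(t)u_0\|_{L^\infty}\le Ct^{-N/2}\|u_0\|_{L^1_{\rm ul}}$.

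For the nonexistence part, (ii)(a) is Theorem~\ref{THC} applied to $f=f_\beta$: hypothesis~(ii) is read off the first asymptotic and holds for any $\delta\in(\max(0,-\beta),1)$, a range that is nonempty precisely when $\beta>-1$; hypothesis~(i) either follows from Remark~\ref{S1R3}~(i) (when $-1<\beta<0$ the refinement forces $f_\beta'(u)F_\beta(u)>1+N/2$ for large $u$) or, when $\beta\ge0$, is verified directly from the identity
\[
\psi''(u)=\frac{f_\beta(\psi(u))\bigl[f_\beta'(\psi(u))-f_{\beta'}'(u)\bigr]}{f_{\beta'}(u)^2}
\]
for $\psi=F_\beta^{-1}\circ F_{\beta'}$ with an auxiliary $\beta'>\beta$, using the matching $\psi(u)\sim u(\log u)^{(\beta'-\beta)N/2}$ obtained from $F_\beta(\psi)=F_{\beta'}$ and the next-order comparison of $f_\beta'(\psi)$ and $f_{\beta'}'(u)$ supplied by the same expansions.

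The main obstacle is (ii)(b), the boundary $\beta=-1$, where Theorem~\ref{THC} marginally fails (hypothesis~(ii) would force $\delta=1$). My plan here is to imitate the proof of Theorem~\ref{THC} with an added $\log\log$ refinement: build a nonnegative $u_0$ concentrated near the origin whose profile is calibrated so that both $u_0$ and $J_\alpha(u_0)$ lie in $\calL^1_{\rm ul}$ for the given $\alpha\in[0,N/2]$, while the Duhamel contribution $\int_0^tS(t-s)f_{-1}(\bar u(s))\,ds$ along any candidate nonnegative solution $\bar u$ blows up in finite time through a comparison ODE in which the constant $\kappa$ of \eqref{kappa} enters. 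Matching these borderline logarithmic rates simultaneously at the two extremes $\alpha=0$ and $\alpha=N/2$, within the closure space $\calL^1_{\rm ul}$, is where the argument will require the most care.
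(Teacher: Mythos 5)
Your treatment of (i)(a), (i)(b) and (ii)(a) follows the paper's route: the key asymptotic $\lim_{u\to\infty}(f_\beta'(u)F_\beta(u)-q)\log(h_\beta(u)+e)=-(N/2)^2\beta$ is exactly the paper's computation, and feeding it into Theorem~\ref{THB} with $\rho\in(-N\beta/(2\alpha),1)$ is what the paper does; likewise (i)(c) via the convergent Laister integral $\int^\infty(\log u)^\beta u^{-1}du$ is Theorem~\ref{TH1} with $J(u)=u$ in disguise (note that Theorem~\ref{TH1}~(i) already accepts $u_0\in L^1_{\rm ul}(\RN)$, so no separate ``passage from $\calL^1_{\rm ul}$ to $L^1_{\rm ul}$'' is needed). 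Two remarks on these parts. In (i)(a), convexity of $J_\alpha$ cannot be read off the asymptotic equivalence $J_\alpha(u)\sim Cu(\log u)^{\alpha+\beta N/2}$; you must differentiate $J_\alpha$ itself, which the paper does via its identities (\ref{eq01+})--(\ref{eq02+}) with $\hat\rho=1$, reducing convexity to the very inequality (\ref{TH3E1}) you verify anyway. In (ii)(a), your verification of Theorem~\ref{THC}~(i) for $\beta\ge0$ via an auxiliary $\beta'>\beta$ is both unnecessary and unfinished: since $f_\beta'(\psi(u))$ and $f_{\beta'}'(u)$ agree to leading order, the sign of $\psi''$ hinges on a next-order comparison you have not carried out. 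The paper simply takes the same $\beta$ in Theorem~\ref{THC}, so that $F^{-1}\circ F_\beta$ is the identity, and covers $-1<\beta\le0$ by Remark~\ref{S1R3}~(i).

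The genuine gap is (ii)(b). You correctly observe that Theorem~\ref{THC} fails at $\beta=-1$, but your replacement is only a plan (``imitate the proof of Theorem~\ref{THC} with an added $\log\log$ refinement \dots this is where the argument will require the most care''), and the plan points in the wrong direction: no Baras--Pierre-type ODE with $\kappa$ is needed ($\kappa$ enters the theorem only to guarantee $f_\beta$ is nondecreasing). The paper's argument is short and of a different nature. For $\beta=-1$ one has $\int_1^\infty\tilde f(\tau)\tau^{-1-2/N}d\tau\ge\int_\sigma^\infty\frac{d\tau}{(\tau+e)\log(\tau+e)}=\infty$, so Theorem~\ref{S5T1} (the necessary-and-sufficient solvability criterion in $\mathcal{L}^1_{\rm ul}(\RN)$, whose necessity part transplants the Laister et al.\ nonexistence construction) directly produces a nonnegative $u_0\in\calL^1_{\rm ul}(\RN)$ for which no nonnegative solution exists. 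The remaining point — that this $u_0$ also satisfies $J_\alpha(u_0)\in\calL^1_{\rm ul}(\RN)$ for every $\alpha\in[0,N/2]$ — follows because at $\beta=-1$ one has $F(u)\ge\frac{N}{2}u^{-2/N}\log(u+e)$, hence $h(u)\le Cu[\log(u+e)]^{-N/2}$ and $0\le J_\alpha'(u)\le C$; so $J_\alpha$ is Lipschitz and membership in $\calL^1_{\rm ul}(\RN)$ transfers from $u_0$ to $J_\alpha(u_0)$. Without this (or an equivalent) argument, case (ii)(b) of the theorem remains unproved in your write-up.
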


\begin{remark}\label{S1R4}
\mbox{}
\begin{enumerate}
\item In the case $\beta>-1$ (\ref{S1E1}) is solvable for all $u_0\ge 0$ satisfying $J_{N/2}(u_0)\in \mathcal{L}^1_{\rm ul}(\RN)$.
However, in the case $\beta=-1$ (\ref{S1E1}) is not necessarily solvable even if $J_{N/2}(u_0)\in \mathcal{L}^1_{\rm ul}(\RN)$.
\item Theorem~\ref{THD} indicates that if $\beta\ge -1$, then a threshold integrability condition is $J_{N/2}(u_0)\in\mathcal{L}^1_{\rm ul}(\RN)$.
\item If $\beta=-1$, then there are $C_2>C_1>0$ such that $C_1<J'_{N/2}(u)<C_2$ for $u\ge 0$.
Therefore, $J_{N/2}(u_0)\in\mathcal{L}^1_{\rm ul}(\RN)$ if and only if $u_0\in\mathcal{L}^1_{\rm ul}(\RN)$.
\item In \cite[Section~4.4]{LRSV16} it was proved that (\ref{S1E1+}) with $f(u)=f_\beta(u)$ on a smooth bounded domain $\Omega$ is always solvable (resp.\ is not always solvable) for a nonnegative function $u_0\in L^1(\Omega)$ if $\beta<-1$ (resp.\ if $-1\le\beta\le 0$).
\end{enumerate}
\end{remark}
\bigskip

We characterize the class of nonlinearities $X_q$, since Theorems~\ref{THA} and \ref{THB} assume $f\in X_q$.
\begin{definition}
\mbox{}
\begin{enumerate}
\item Let ${\rm RV}_p$, $0\le p<\infty$, denote the set of {\it regularly varying functions}, {\it i.e.}, 
$f\in {\rm RV}_p$ if
$$\lim_{u\to\infty}\frac{f(\lambda u)}{f(u)}=\lambda^p.$$
In particular, if $f\in {\rm RV}_0$, then $f$ is called a {\it slowly varying function}.
\item Let ${\rm RV}_{\infty}$ denote the set of {\it rapidly varying functions}, i.e,
$f\in {\rm RV}_{\infty}$ if
\[
\lim_{u\to\infty}\frac{f(\lambda u)}{f(u)}=
\begin{cases}
\infty & \textrm{for}\ \lambda>1,\\
0 & \textrm{for}\ 0<\lambda<1.
\end{cases}
\]
\end{enumerate}
\end{definition}
The class ${\rm RV}_p$ is a generalization of a homogeneous function of degree $p$ and ${\rm RV}_{\infty}$ is a generalization of a superpower function, {\it e.g.}, $e^u$.
Readers can consult the book \cite{GD87} for details of ${\rm RV}_p$.
\begin{maintheorem}\label{THE}
Assume that $p$ and $q$ satisfy the following: $p:=q/(q-1)$ if $q>1$, and $p:=\infty$ if $q=1$.
Then the following hold:
\begin{enumerate}
\item If $f\in X_q$ for some $q\in [1,\infty)$, then $f\in {\rm RV}_p$.
\item Suppose that $f$ satisfies (\ref{f}) and that $f'$ is nondecreasing.
Then, $f\in X_q$ for $q\in (1,\infty)$ if and only if $f\in{\rm RV}_p$ for $p\in (1,\infty)$.
\item Suppose that $f$ satisfies (\ref{f}) and that $f'(u)F(u)$ is nondecreasing.
Then, $f\in X_1$ if and only if $f\in{\rm RV}_{\infty}$.
\end{enumerate}
\end{maintheorem}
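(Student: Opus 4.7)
The unifying tool for all three parts is the identity
\[
\frac{d}{du}\bigl[f(u)F(u)\bigr] = f'(u)F(u) - 1,
\]
which is immediate from $F'(u) = -1/f(u)$. The plan is to exploit this identity to transfer information between $f'F$, $fF/u$, and $uf'/f$, and then invoke standard regular-variation theory as summarized in \cite{GD87}.

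For part (i), I assume $f\in X_q$ with $q\in[1,\infty)$. Since $(fF)' = f'F - 1 \to q-1$, L'Hospital's rule gives $\lim_{u\to\infty} f(u)F(u)/u = q-1$ (treating the cases $q=1$ and $q>1$ separately: if $q>1$ the integrand of the identity is eventually positive so $fF\to\infty$ and L'Hospital applies, while if $q=1$ one has $fF/u\to 0$ whether or not $fF$ is bounded). Combining the two limits,
\[
\frac{uf'(u)}{f(u)} = \frac{f'(u)F(u)}{f(u)F(u)/u} \longrightarrow
\begin{cases} q/(q-1)=p & \text{if } q>1,\\ +\infty & \text{if } q=1.\end{cases}
\]
Finally, I rewrite $\log[f(\lambda u)/f(u)] = \int_1^\lambda \frac{(tu)f'(tu)}{f(tu)}\,\frac{dt}{t}$ and pass to the limit (uniformly for $t\in[1,\lambda]$ since $tu\to\infty$) to conclude $f(\lambda u)/f(u)\to\lambda^p$ when $q>1$ and $\to\infty$ when $q=1$, i.e., $f\in\mathrm{RV}_p$ or $\mathrm{RV}_\infty$ respectively.

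Part (ii) reduces to proving $\mathrm{RV}_p\Rightarrow X_q$ under the extra assumption that $f'$ is nondecreasing, since the other direction is (i). Because $1/f\in\mathrm{RV}_{-p}$ with $-p<-1$, Karamata's theorem for integrals gives $F(u)\sim u/((p-1)f(u))$. The monotone density theorem, applicable because $f\in\mathrm{RV}_p$ and $f'$ is monotone, yields $f'(u)\sim p f(u)/u$. Multiplying these two asymptotics produces $f'(u)F(u) \to p/(p-1) = q$, hence $f\in X_q$.

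Part (iii) is the most delicate. The assumption that $f'F$ is nondecreasing guarantees that $q_0 := \lim_{u\to\infty} f'(u)F(u)$ exists in $[1,\infty]$. If $q_0\in(1,\infty)$, part (i) would give $f\in\mathrm{RV}_{q_0/(q_0-1)}$ with finite exponent, contradicting $f\in\mathrm{RV}_\infty$. The remaining possibility $q_0=\infty$ will be ruled out by showing it is incompatible with $f\in\mathrm{RV}_\infty$: on the one hand, integrating $(fF)' = f'F-1$ forces $f(u)F(u)/u \to \infty$; on the other hand, the Potter-type bound for rapidly varying functions (for every $M>1$, eventually $f(\lambda u)/f(u)\ge \lambda^M$ for $\lambda\ge 1$) allows the estimate
\[
f(u)F(u) = \int_u^\infty \frac{f(u)}{f(\tau)}\,d\tau \le \int_u^\infty \Bigl(\frac{u}{\tau}\Bigr)^{M}d\tau = \frac{u}{M-1},
\]
so $f(u)F(u)/u \le 1/(M-1)$, and letting $M\to\infty$ gives $f(u)F(u)/u\to 0$. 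This contradicts $f(u)F(u)/u\to\infty$, leaving only $q_0=1$. The main obstacle I anticipate is this last contradiction, since one must justify the Potter-type lower bound for $\mathrm{RV}_\infty$ (or an equivalent integrability estimate) rather than citing the more familiar Potter bounds for finite-index regular variation.
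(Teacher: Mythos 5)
Your proposal is correct, and parts (i)--(ii) run essentially parallel to the paper: the paper also passes through $(fF)'=f'F-1$ to get $uf'/f\to p$ (resp. $\infty$) and, for the converse in (ii), reproves inline exactly the monotone-density estimate (sandwiching $uf'(u)/f(u)$ between difference quotients, following \cite[Proposition~1.7.11]{GD87}) plus an L'Hospital computation of $F(u)/(u/f(u))$, where you instead cite Karamata's integral theorem and the monotone density theorem directly; the paper also detours through the Karamata representation (its Lemma~7.1) for (i) with $q>1$, whereas you verify the defining limit $f(\lambda u)/f(u)\to\lambda^p$ directly. The genuine divergence is in (iii). The paper never introduces the trichotomy on $q_0:=\lim f'(u)F(u)$: it shows $f'$ is nondecreasing, deduces $uf'/f\to\infty$ from the ${\rm RV}_\infty$ hypothesis, gets $f(u)F(u)/u\to0$ by L'Hospital, and then squeezes $f'F\to1$ by integrating $f'(\mu u)F(\mu u)-1$ over $\mu$ and exploiting the monotonicity of $f'F$ on both sides. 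Your argument instead rules out $q_0\in(1,\infty)$ by feeding $q_0$ back into part (i) (using that ${\rm RV}_{p_0}$ and ${\rm RV}_\infty$ are disjoint) and rules out $q_0=\infty$ by an integral estimate; this is arguably cleaner in that it reuses (i) rather than redoing integral manipulations, at the cost of needing a quantitative bound on $F$ for rapidly varying $f$.

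One caveat on that last step, which you correctly flag as the delicate point: the constant-free bound ``$f(\lambda u)/f(u)\ge\lambda^M$ eventually, uniformly for all $\lambda\ge1$'' does not follow from the pointwise definition of ${\rm RV}_\infty$ near $\lambda=1$ (a nondecreasing $f$ can be locally flat), so as literally stated the Potter-type inequality is slightly too strong. It is easily repaired: either split off the range $\lambda\in[1,\mu)$, whose contribution to $\int_u^\infty f(u)/f(\tau)\,d\tau$ is at most $(\mu-1)u$, and apply the geometric iteration $f(\mu^k u)\ge c^k f(u)$ on $[\mu,\infty)$; or note that any single $M>1$ already gives $f(u)F(u)/u\le C_M$ bounded, which suffices to contradict $f(u)F(u)/u\to\infty$; or bypass the issue entirely by borrowing the paper's route ($uf'/f\to\infty$, hence $fF/u\to0$ by L'Hospital). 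With that repair the proof is complete.
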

Theorem~\ref{THE}~(ii) and (iii) say that $X_q$ ($1\le q<\infty$) and ${\rm RV}_p$ ($1<p\le\infty$) are equivalent.
Therefore, Theorems~\ref{THA} and \ref{THB} can be applied to $f\in {\rm RV}_p$ under additional assumptions.
It follows from Karamata's representation theorem, which is stated in Proposition~\ref{S7P1}, that for each function $f\in {\rm RV}_p$, $1<p<\infty$, $f(u)$ has a concrete form (\ref{S7P1E1}) which explicitly describes a function of $X_q$.
Moreover, it is known that $f\in {\rm RV}_p$, $0<p<\infty$, can be written as $f(u)=u^pL(u)$ for $u>1$, where $L\in {\rm RV}_0$, {\it i.e.}, a slowly varying function.

\bigskip
Let us explain technical details.
In the existence part a critical case (Theorem~\ref{THA}~(iii) without (\ref{S1P2E1})) or a doubly critical case (Theorem~\ref{THB}) were not covered by existing results.
Since these cases are delicate, we introduce a new method.
First, we separately treat the nonlinear term $f$ and a convex function $J$, which appears in an integrability condition $J(u_0)\in L^1_{\rm ul}(\RN)$.
We introduce a simple but new supersolution (\ref{TH1E3}), using $J$.
In \cite{IKS16,GM21,RS13} similar functions were also used as supersolutions.
However, these supersolutions were directly related to integrability conditions.
In Theorem~\ref{TH1} we show that (\ref{TH1E3}) is actually a supersolution for (\ref{S1E1}), and hence by monotone iterative method we can construct a nonnegative solution.
In \cite{FI18} a change of variables was used to construct a supersolution, and (\ref{S1P2E1}) was necessary.
Essential conditions for $J$ are (\ref{TH1E1}) and (\ref{TH1E2}).
Second, we relate $f$ and $J$.
Specifically, we take $J(u)=F(u)^{-r}$ in Theorem~\ref{THA} and 
$J(u)=J_{\alpha}(u)$ in Theorem~\ref{THB}.
This method can analyze in detail a relationship between the growth of $f$ and the integrability of $u_0$, and can treat superpower nonlinearities.
Parabolic systems with superpower nonlinearities were studied in \cite{IKS16,MS19,S19}.
Theorem~\ref{TH1} is also useful in the study of Problem (B).
Using Theorem~\ref{TH1}, we give a necessary and sufficient condition on $f$ for a solvability in $\mathcal{L}^r_{\rm ul}(\RN)$, $r\ge 1$, in Section 9.
Theorem~\ref{TH1} is used in the proof of the sufficient part.
Main technical tools in the proof of Theorem~\ref{TH1} are a monotone iterative method (Proposition~\ref{S2P1}), $L^p$-$L^q$ estimates (Proposition~\ref{S2P2}) and Jensen's inequality (Proposition~\ref{S2P4}).

It is not easy to obtain a nonexistence result in a doubly critical case.
In \cite{BP85,HI18} necessary conditions on $u_0$ were obtained for $f(u)=u^p$, and nonexistence results were established.
In Theorem~\ref{S4T1} we prove a nonexistence theorem for $f_{\beta}(u)=u^{1+2/N}[\log(u+e)]^{\beta}$, $\beta>0$, which needs a more detailed analysis than previous studies.
The function $f_{\beta}$ is not homogeneous, and the function $H(t)$ defined by (\ref{eq22}), which is related to a local $L^1$-norm of a solution, is a key in the proof of Theorem~\ref{S4T1}.
The behavior of $H(t)$ gives a necessary condition for the existence of a nonnegative solution.
If we take (\ref{S4E1}) as an initial function, then we obtain a contradiction, and a nonexistence theorem for $f_{\beta}$ is proved.
The proof of Theorem~\ref{THC} is by contradiction.
Suppose that (\ref{S1E1}) has a nonnegative solution.
Using a change of variables, we can construct a supersolution for (\ref{S1E1}) with $f_{\beta}$ from  a solution of (\ref{S1E1}).
Then, it follows from a monotone iterative method that (\ref{S1E1}) with $f_{\beta}$ has a nonnegative solution.
However, (\ref{S1E1}) with $f_{\beta}$ does not have a nonnegative solution, because of Theorem~\ref{S4T1}.
Therefore, the contradiction concludes the proof of Theorem~\ref{THC}.
Main technical tools in the proofs of Theorems~\ref{S4T1} and \ref{THC} are the differential inequality (\ref{eq22+}) and a comparison principle.
Theorems~\ref{THB} and \ref{THC} are used in the proof of a complete classification for $f_{\beta}(u)$ (Theorem~\ref{THD}).

In this paper Problem (B) is also studied.
Specifically, we obtain growth conditions on $f$ for existence and nonexistence results when the integrability condition $J(u_0)\in {L}^1_{\rm ul}(\RN)$ is given.
Corollaries~\ref{S3C1} and \ref{S3C2} are derived from Theorem~\ref{TH1}.
Corollaries~\ref{S4C1} and \ref{S4C2} are counterparts of Proposition~\ref{S1P2}~(ii) and Theorem~\ref{THC}, respectively.

In \cite{LRSV16} a complete characterization for existence and nonexistence of a solution of (\ref{S1E1}) in $L^r(\Omega)$ was obtained.
Their definition of a solution is different from Definition~\ref{S1D1}, and requires that 
$u\in L^{\infty}((0,T), L^r (\Omega))$.
Only in Section~9 we adopt a similar definition of \cite{LRSV16} which is different from Definition~\ref{S1D1}, and obtain the same characterization in the $\mathcal{L}^r_{\rm ul}(\RN)$ framework.

This paper consists of ten sections.
In Section~2 several examples to which Theorem~\ref{THA} can be applied are given.
We recall basic propositions and prove useful lemmas.
They will be used in the proof of our Theorems~\ref{THA}, \ref{THB}, \ref{THC} and \ref{THD}.
In Section~3 we prove an abstract existence theorem (Theorem~\ref{TH1}) and prove Theorem~\ref{THA}.
Moreover, existence conditions on $f$ are obtained in Corollaries~\ref{S3C1} and \ref{S3C2}.
In Section~4 we prove Theorem~\ref{THC}.
Nonexistence conditions on $f$ are obtained in Corollaries~\ref{S4C1} and \ref{S4C2}.
In Section~5 we study a necessary and sufficient condition for a solvability of (\ref{S1E1}) in $\mathcal{L}^1_{\rm ul}(\RN)$.
Section~6 is devoted to the proof of Theorem~\ref{THD}.
In Section~7 we prove Theorem~\ref{THE}.
Section 8 is a summary and problems.
Sections 9 and 10 are appendices to \cite{LRSV16} and \cite{GM21}, respectively.

\section{Examples and preliminaries}
We give four examples and recall known results which are useful in the proof of the main theorems.
\subsection{Example 1. $f(u)=\exp(u^p)$, $p>0$}
By direct calculation we have
\[
q:=\lim_{u\to\infty}f'(u)F(u)=\lim_{u\to\infty}\frac{f'(u)^2}{f(u)f''(u)}
=\lim_{u\to\infty}\frac{p}{p+(p-1)u^{-p}}=1.
\]
We have
\[
\frac{d}{du}\left(\frac{f'(u)}{f(u)^{1/q}}\right)
=p(p-1)u^{p-2}.
\]
We consider the case $p\ge 1$
Since $p\ge 1$, $f'(u)/f(u)^{1/q}$ is nondecreasing.
Since
\begin{equation}\label{S2E1E1}
\frac{f'(u)}{f(u)^{1/q}}f(u)^{1/q}\int_u^{\infty}\frac{ds}{f(s)}\le f(u)^{1/q}\int_u^{\infty}\frac{f'(s)ds}{f(s)^{1/q+1}}
=f(u)^{1/q}\left[-qf(s)^{-1/q}\right]_u^{\infty}=q,
\end{equation}
we see that $f'(u)F(u)\le q$.
Proposition~\ref{S1P2}~(i) and (ii) are applicable.
Next, we consider the case $0<p<1$.
Since $f'(u)/f(u)^{1/q}$ is decreasing, by calculation similar to (\ref{S2E1E1}) we see that $f'(u)F(u)>1$.
Proposition~\ref{S1P2}~(i) is not applicable, while Theorem~\ref{THA}~(i) and (iii) are applicable.
Using Theorem~\ref{THA}~(i) and (iii) and Proposition~\ref{S1P2}~(ii), we obtain the following:
\begin{theorem}\label{S2T2}
Let $u_0\ge 0$ and $f(u)=\exp(u^p)$ $(p>0)$. Then the following hold:\\
(i) The problem (\ref{S1E1}) admits a local in time nonnegative solution if $F(u_0)^{-r}\in L^1_{\rm ul}(\RN)$ for some $r>N/2$ or $F(u_0)^{-N/2}\in \calL^1_{\rm ul}(\RN)$.\\
(ii) For each $r\in (0,N/2)$, there exists a nonnegative initial function $u_0\in L^1_{\rm ul}(\RN)$ such that $F(u_0)^{-r}\in L^1_{\rm ul}(\RN)$ and (\ref{S1E1}) admits no nonnegative solution.
\end{theorem}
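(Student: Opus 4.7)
The strategy is to collect the computations already carried out in the preamble to this subsection and then apply the main theorems directly; essentially no new analysis is needed beyond verifying that the hypotheses of Theorem~\ref{THA} and Proposition~\ref{S1P2}(ii) are met.

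First, I would record that $f(u)=\exp(u^p)$ satisfies (\ref{f}) for $p>0$: positivity, monotonicity and $F(u)<\infty$ for $u>0$ are clear, and the L'Hospital computation already in the subsection gives $f\in X_q$ with
\[
q=\lim_{u\to\infty}\frac{f'(u)^2}{f(u)f''(u)}=\lim_{u\to\infty}\frac{p}{p+(p-1)u^{-p}}=1.
\]
In particular $q=1<1+r$ for every $r>0$, so the strict inequality needed by Theorem~\ref{THA}(i) and (iii) (and by the $q<1+N/2$ hypothesis of Proposition~\ref{S1P2}(ii)) is automatic, independent of whether $p\ge 1$ or $0<p<1$.

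For part (i), I would split along the integrability exponent. If $F(u_0)^{-r}\in L^1_{\rm ul}(\RN)$ for some $r>N/2$, then the triple $(q,r)=(1,r)$ lies in the subcritical region $r>N/2$, $q<1+r$, so Theorem~\ref{THA}(i) directly yields a local nonnegative solution. If instead $F(u_0)^{-N/2}\in\calL^1_{\rm ul}(\RN)$, then $(q,r)=(1,N/2)$ sits in the critical region $r=N/2$, $q<1+r=1+N/2$, so Theorem~\ref{THA}(iii) applies. This unifies the two regimes $p\ge 1$ and $0<p<1$: for $p\ge 1$ the pointwise bound $f'(u)F(u)\le q$ derived in (\ref{S2E1E1}) also lets Proposition~\ref{S1P2}(i) cover (i), but for $0<p<1$ that bound fails, so one genuinely needs Theorem~\ref{THA}, which does not assume (\ref{S1P2E1}).

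For part (ii), I would apply Proposition~\ref{S1P2}(ii) with $q=1$ and any $r\in(0,N/2)$: the conditions $q<1+N/2$ and $q\le 1+r$ both hold trivially, and convexity $f''(u)\ge 0$ holds for $p\ge 1$ by an elementary differentiation of $\exp(u^p)$. For $0<p<1$ the function $f$ is not $C^2$ at the origin and $f''$ may change sign near $0$, which is the one small obstacle; I would handle it by replacing $f$ on a bounded set by a smooth, nondecreasing, convex extension $\tilde f$ with $\tilde f\equiv f$ for $u\ge u_*$ large. Since Proposition~\ref{S1P2}(ii) is a nonexistence statement and $\tilde f\le f$ forces any nonnegative solution of the modified equation to dominate a solution of the original one on the set where they agree, the nonexistence transfers: producing $u_0\in L^1_{\rm ul}(\RN)$ with $F(u_0)^{-r}\in L^1_{\rm ul}(\RN)$ for which the $\tilde f$-problem has no nonnegative solution then rules out a solution of (\ref{S1E1}) for $f$ itself. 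The only step that requires any care is this convexity reduction at the origin; everything else is a direct citation of Theorem~\ref{THA} and Proposition~\ref{S1P2}(ii).
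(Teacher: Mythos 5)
Your argument is correct and follows the paper's own route: existence via Theorem~\ref{THA}~(i) and (iii) after computing $q=1$, and nonexistence via Proposition~\ref{S1P2}~(ii), exactly as the paper does. The convex $C^2$ modification of $f$ near the origin for $0<p<1$ (needed because $f''$ changes sign there and $f\notin C^2[0,\infty)$ for small $p$) is extra care that the paper silently omits, and your transfer of nonexistence from $\tilde f\le f$ back to $f$ via the supersolution/monotone-iteration argument is the same device the paper uses in the proof of Corollary~\ref{S4C1}.
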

By L'Hospital's rule we see that
\[
\lim_{u\to\infty}\frac{F(u)}{p^{-1}(u+1)^{-p+1}e^{-u^p}}=1.
\]
Therefore, $F(u_0)^{-r}\in L^1_{\rm ul}(\RN)$ if and only if $(u+1)^{(p-1)r}e^{ru^p}\in L^1_{\rm ul}(\RN)$.

\subsection{Example 2. $f(u)=\exp(|\log u|^{p-1}\log u)$, $p>1$}
By direct calculation we have
\[
q:=\lim_{u\to\infty}f'(u)F(u)=\lim_{u\to\infty}\frac{f'(u)^2}{f(u)f''(u)}
=\lim_{u\to\infty}\frac{1}{1+\frac{p-1}{p(\log u)^p}-\frac{1}{p(\log u)^{p-1}}}=1.
\]
We have
\[
\frac{d}{du}\left(\frac{f'(u)}{f(u)^{1/q}}\right)
=p\frac{(\log u)^{p-2}}{u^2}\left\{(p-1)-\log u\right\}<0
\ \ \textrm{for large}\ u>0.
\]
Since $f'(u)/f(u)^{1/q}$ is decreasing for large $u>0$, by calculation similar to (\ref{S2E1E1}) we see that $f'(u)F(u)> 1$.
Proposition~\ref{S1P2}~(i) is not applicable, while Theorem~\ref{THA}~(i) and (iii) are applicable.
Using Theorem~\ref{THA}~(i) and (iii) and Proposition~\ref{S1P2}~(ii), we obtain the following:
\begin{theorem}
Let $u_0\ge 0$ and $f(u)=\exp(|\log u|^{p-1}\log u)$, $p>1$.
Then the same statements as Theorem~\ref{S2T2} hold.
\end{theorem}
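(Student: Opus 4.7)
The plan is to apply the same machinery used in the proof of Theorem~\ref{S2T2}, relying on the computations carried out in the paragraph above: namely that $f\in X_q$ with $q=\lim_{u\to\infty}f'(u)F(u)=1$, and that $f'(u)F(u)>1$ for large $u$ so that (\ref{S1P2E1}) fails, blocking Proposition~\ref{S1P2}~(i) but leaving Theorem~\ref{THA} available in its place.

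For the existence half, statement~(i), I would simply invoke Theorem~\ref{THA}. Since $q=1$, the hypothesis $q<1+r$ reduces to $1<1+r$, which is automatic for every $r>0$. Part~(i) of Theorem~\ref{THA} then covers $r>N/2$ with $F(u_0)^{-r}\in L^1_{\rm ul}(\RN)$, while part~(iii) covers the critical case $r=N/2$ with $F(u_0)^{-N/2}\in\calL^1_{\rm ul}(\RN)$. In either range we obtain a local in time nonnegative solution, which is exactly the claim of~(i).

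For the nonexistence half, statement~(ii), I would apply Proposition~\ref{S1P2}~(ii) for each $r\in(0,N/2)$. The parameter conditions $q<1+N/2$ and $q\le 1+r$ reduce to $1<1+N/2$ and $1\le 1+r$, both trivial. The remaining structural hypothesis $f\in C^2[0,\infty)$ with $f''\ge 0$ on $[0,\infty)$ needs a brief discussion: since $p>1$ forces $|\log u|^{p-1}\log u\to-\infty$ as $u\to 0^+$, the function $f$ extends continuously to the origin by $f(0)=0$ and decays there faster than any polynomial, and a direct differentiation shows $f''(u)\ge 0$ for all large $u$.

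The main obstacle is verifying (or circumventing) the $C^2$ and global convexity conditions down to $u=0$. If convexity should fail on a small interval near the origin, the standard workaround is to replace $f$ on $[0,M]$ by a smooth nondecreasing convex extension that agrees with the original $f$ for $u\ge 2M$; such a modification preserves membership in $X_q$ (a purely tail property), does not affect the limiting value $q=1$, and leaves intact every hypothesis of Proposition~\ref{S1P2}~(ii). The nonexistence-producing initial datum supplied by the proposition depends only on the large-$u$ behavior of $f$, so one can arrange $u_0$ to take its relevant values in the range where the original and modified nonlinearities coincide, and the conclusion transfers back to our $f$.
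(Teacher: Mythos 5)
Your proposal is correct and follows essentially the same route as the paper: existence from Theorem~\ref{THA}~(i) and (iii) using $q=1$ (so $q<1+r$ holds automatically), and nonexistence from Proposition~\ref{S1P2}~(ii). Your extra discussion of the $C^2[0,\infty)$ and global convexity hypotheses near $u=0$ (and near $u=1$, where $|\log u|^{p-1}\log u$ may fail to be $C^2$ for $1<p<2$) addresses a point the paper passes over silently, and the tail-modification argument you sketch is the standard and correct way to handle it.
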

By L'Hospital's rule we see that
\[
\lim_{u\to\infty}\frac{F(u)}{\frac{u+e}{p[\log (u+e)]^{p-1}}e^{-|\log u|^{p-1}\log u}}=1.
\]
Therefore, $F(u_0)^{-r}\in L^1_{\rm ul}(\RN)$ if and only if
\[
\frac{[\log (u+e)]^{(p-1)r}}{(u+e)^r}e^{r|\log u|^{p-1}\log u}\in L^1_{\rm ul}(\RN).
\]
\subsection{Example 3. $f(u)=(u+a)^p/\{(p-1)\log(u+a)-1\}$, $p>1+2/N$}
We define $a:=e^{2/(p-1)}$ so that $(p-1)\log (u+a)-1\ge 1$ for $u\ge 0$.
Let $q:=p/(p-1)$.
By direct calculation we have
\[
F(u)=\frac{\log (u+a)}{(u+a)^{p-1}},
\]
hence,
\[
f'(u)F(u)=\frac{p}{p-1}+\frac{(p-1)(2p-1)\log(u+a)-p}{(p-1)\{(p-1)\log (u+a)-1\}^2}
\to q\ \ \textrm{as}\ \ u\to\infty.
\]
Since $f'(u)F(u)>q$, Proposition~\ref{S1P2} (i) is not applicable.
The statements of Theorem~\ref{THA}~(i) and (iii) and Proposition~\ref{S1P2}~(ii) hold.
Here,
\[
F(u)^{-r}=\frac{(u+a)^{(p-1)r}}{[\log(u+a)]^r}.
\]

\subsection{Example 4. The $n$-th iterated exponential function} 
Let $f(u):=\exp(\underbrace{\cdots\exp(u)\cdots}_{n\ {\rm times}})$, $n\ge 1$.
It is easy to show that $q=1$ and $f'(u)^2/(f(u)f''(u))\le 1$. See \cite{M18} for details.
Integrating $1/f(u)\le f''(u)/f'(u)^2$ over $[u,\infty)$, we have $f'(u)F(u)\le 1$.
Using Proposition~\ref{S1P2}, we obtain the following:
\begin{theorem}
Let $N\ge 1$, $u_0\ge 0$ and
$f(u):=\exp(\underbrace{\cdots\exp(u)\cdots}_{n\ {\rm times}})$, $n\ge 1$.
Then the same statements as Theorem~\ref{S2T2} hold.
\end{theorem}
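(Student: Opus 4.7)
The plan is to reduce everything to Proposition~\ref{S1P2} by verifying its hypotheses for the $n$-fold iterated exponential $f$. Since Theorem~\ref{S2T2} itself is a direct corollary of Proposition~\ref{S1P2}~(i) and (ii) specialized to $q=1$, all that needs checking is that $f\in X_1$, that (\ref{S1P2E1}) holds, and that $f\in C^2$ with $f''(u)\ge 0$ for all $u\ge 0$.

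First I would establish simultaneously that $f\in X_1$ and the stronger pointwise bound $f'(u)^2\le f(u)f''(u)$. Both reduce to a short induction on $n$: for $n=1$, $f(u)=e^u$ gives $f'^{\,2}=ff''$ identically; for the inductive step, writing $f(u)=e^{g(u)}$ where $g$ is the $(n-1)$-fold iterated exponential, a direct chain-rule computation shows the inequality is preserved and that the ratio $f'^{\,2}/(ff'')$ tends to $1$ as $u\to\infty$. This induction is already carried out in \cite{M18} and yields
\[
q=\lim_{u\to\infty}\frac{f'(u)^2}{f(u)f''(u)}=1,\qquad \frac{1}{f(u)}\le\frac{f''(u)}{f'(u)^2}=-\frac{d}{du}\!\left(\frac{1}{f'(u)}\right).
\]

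Integrating the last inequality from $u$ to $\infty$ and noting $1/f'(\tau)\to 0$ as $\tau\to\infty$ gives $F(u)\le 1/f'(u)$, i.e.\ $f'(u)F(u)\le 1=q$, which is exactly (\ref{S1P2E1}). The positivity, monotonicity, smoothness and the finiteness $F(u)<\infty$ required by (\ref{f}) are all obvious from the rapid growth of iterated exponentials. Convexity $f''(u)\ge 0$ is immediate since $f$ is a composition of convex nondecreasing functions.

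With $q=1<1+N/2$, $q\le 1+r$ for every $r>0$, condition (\ref{S1P2E1}) satisfied, and $f\in C^2$ with $f''\ge 0$, Proposition~\ref{S1P2}~(i) yields the existence assertion in both the subcritical ($r>N/2$) and critical ($r=N/2$) cases, while Proposition~\ref{S1P2}~(ii) yields the nonexistence assertion for $0<r<N/2$. These are precisely the two statements of Theorem~\ref{S2T2}, completing the proof. The only nontrivial input is the inductive verification of $f'^{\,2}\le ff''$ and $q=1$, which constitutes the main (mild) obstacle but is already recorded in \cite{M18}.
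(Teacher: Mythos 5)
Your proposal is correct and follows essentially the same route as the paper: establish $q=1$ and $f'(u)^2\le f(u)f''(u)$ (citing \cite{M18}), integrate $1/f\le f''/f'^2$ over $[u,\infty)$ to obtain $f'(u)F(u)\le 1$, i.e.\ (\ref{S1P2E1}), and then invoke Proposition~\ref{S1P2}~(i) and (ii). Your additional explicit check of the convexity hypothesis $f''\ge 0$ for the nonexistence part is a welcome (if minor) completion of what the paper leaves implicit.
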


\subsection{Preliminaries}
For any set $X$ and the mappings $a=a(x)$ and $b=b(x)$ from $X$ to $[0,\infty)$, we say 
\[
a(x)\lesssim b(x) \ \ \textrm{for all}\ x\in X
\]
if there exists a positive constant $C$ such that $a(x) \le Cb(x)$ for all $x\in X$.

We recall a monotone iterative method.
\begin{proposition}\label{S2P1}
Let $0<T\le\infty$ and let $f$ be a continuous nondecreasing function such that $f(0)\ge 0$.
The problem (\ref{S1E1}) has a nonnegative solution for $0<t<T$ if and only if (\ref{S1E1}) has a nonnegative supersolution $\bu(t)\in L^{\infty}((0,T),L^1_{\rm ul}(\RN))\cap L_{\rm loc}^{\infty}((0,T),L^{\infty}(\RN))$.
Moreover, if a nonnegative supersolution $\bu(t)$ exists, then the solution $u(t)$ obtained satisfies $0\le u(t)\le\bu(t)$.
\end{proposition}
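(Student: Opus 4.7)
The plan is a standard monotone iteration. The $(\Rightarrow)$ direction is immediate, since any solution in the sense of Definition~\ref{S1D1} satisfies $u=\calF[u]\ge\calF[u]$ and has the required regularity, hence is itself a supersolution.

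For the nontrivial direction $(\Leftarrow)$, assume a nonnegative supersolution $\bu$ exists. Set $w_0\equiv 0$ and define iteratively
\[
w_{n+1}(x,t) := \calF[w_n](x,t) = S(t)u_0(x) + \int_0^t S(t-s)f(w_n(s))\,ds.
\]
The first step is to establish, by induction on $n$, the pointwise a.e.\ chain
\[
0 \le w_1 \le w_2 \le \cdots \le w_n \le \bu\quad\textrm{on}\ \RN\times(0,T).
\]
The base inequality $w_1\ge 0$ uses $u_0\ge 0$, $f(0)\ge 0$, and the positivity of the heat kernel $K$. The monotonicity $w_{n+1}\ge w_n$ is propagated from $w_1\ge w_0$ via monotonicity of $f$ and positivity of $S(t-s)$; the bound $w_n\le\bu$ is propagated from $w_0\le\bu$ in the same way, this time combined with the supersolution inequality $\bu\ge\calF[\bu]$.

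Given the chain, define the pointwise limit $u(x,t) := \lim_{n\to\infty} w_n(x,t)$, which exists a.e.\ and satisfies $0\le u\le\bu$. Continuity and monotonicity of $f$ yield $f(w_n)\uparrow f(u)$ pointwise, and the Lebesgue monotone convergence theorem applied to the nonnegative integrand $K(x,y,t-s)f(w_n(y,s))$ on $\RN\times(0,t)$ (with Fubini handling the iterated integral in $s$ and $y$) permits exchange of limit and $\calF$, giving $\calF[w_n]\uparrow\calF[u]$. Passing to the limit in $w_{n+1}=\calF[w_n]$ then produces the required identity $u=\calF[u]$ a.e., which is (\ref{S1D1E1}).

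Finally, the integrability in Definition~\ref{S1D1} is inherited from $\bu$: the pointwise bound $0\le u\le\bu$ immediately gives
\[
\|u(t)\|_{L^1_{\rm ul}(\RN)}\le\|\bu(t)\|_{L^1_{\rm ul}(\RN)},\qquad \|u(t)\|_{L^\infty(\RN)}\le\|\bu(t)\|_{L^\infty(\RN)},
\]
so $u\in L^\infty((0,T),L^1_{\rm ul}(\RN))\cap L^\infty_{\rm loc}((0,T),L^\infty(\RN))$. The delicate point which I expect to be the main technical obstacle is ensuring that each $\calF[w_n]$, and in particular $\calF[u]$, is finite a.e.\ so that the iteration is well defined and the fixed-point identity makes sense; this is handled by the chain $0\le\calF[w_n]\le\calF[\bu]\le\bu$, where the supersolution inequality together with the a.e.\ finiteness of $\bu$ built into Definition~\ref{S1D1} is precisely what rules out blow-up of the iterates. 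The pointwise bound $u\le\bu$ is exactly the ``moreover'' assertion.
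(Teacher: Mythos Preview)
Your proof is correct and follows essentially the same monotone-iteration strategy as the paper. The only cosmetic difference is the choice of starting point: the paper initializes with $u_1=S(t)u_0$ and then iterates $u_n=\calF[u_{n-1}]$, whereas you begin at $w_0\equiv 0$; both sequences are monotone, bounded above by $\bu$, and converge to the same minimal integral solution, so this difference is immaterial.
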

We show the proof for readers' convenience.
See {\it e.g.} \cite[Theorem~2.1]{RS13} for details.
\begin{proof}
If (\ref{S1E1}) has a nonnegative solution, then the solution is also a supersolution.
Thus, it is enough to show that (\ref{S1E1}) has a nonnegative solution if (\ref{S1E1}) has a supersolution.
Let $\bu$ be a supersolution for $0<t<T$.
Let $u_1=S(t)u_0$.
We define $u_n$, $n=1,2,3,\ldots$, by
\[
u_n=\calF[u_{n-1}].
\]
Then we can show by induction that
\[
0\le u_1\le u_2\le\cdots\le u_n\le\cdots\le\bu <\infty\ \ \textrm{for a.e.}\ x\in\RN,\ 0<t<T.
\]
This indicates that the limit $\lim_{n\to\infty}u_n(x,t)$ which is denoted by $u(x,t)$ exists for almost all $x\in\RN$ and $0<t<T$.
By the monotone convergence theorem we see that
\[
\lim_{n\to\infty}\calF[u_{n-1}]=\calF(u),
\]
and hence $u=\calF(u)$.
It is clear that $0\le u(t)\le \bu(t)$.
Since $\bu(t)\in L^{\infty}((0,T),L^1_{\rm ul}(\RN))\cap L^{\infty}_{\rm loc}((0,T),L^{\infty}(\RN))$, we see that $u(t)\in L^{\infty}((0,T),L^1_{\rm ul}(\RN))\cap L^{\infty}_{\rm loc}((0,T),L^{\infty}(\RN))$
Thus, $u$ is a solution of (\ref{S1E1}).
\end{proof}

\begin{proposition}\label{S2P2}
The following hold:
\begin{enumerate}
\item Let $N\ge 1$ and $1\le\alpha\le\beta\le\infty$.
There is $C>0$ and $t_0>0$ such that, for $\phi\in L_{\rm ul}^{\alpha}(\RN)$,
\[
\left\|S(t)\phi\right\|_{L_{\rm ul}^{\beta}(\RN)}
\le {C}{t^{-\frac{N}{2}\left(\frac{1}{\alpha}-\frac{1}{\beta}\right)}}
\left\|\phi\right\|_{L_{\rm ul}^{\alpha}(\RN)}
\ \ \textrm{for}\ \ 0<t<t_0.
\]
\item Let $N\ge 1$ and $1\le\alpha<\beta\le\infty$.
Then, for each $\phi\in \calL^{\alpha}_{\rm ul}(\RN)$ and $C_*>0$, there is $t_0=t_0(C_*,\phi)$ such that
\[
\left\|S(t)\phi\right\|_{L_{\rm ul}^{\beta}(\RN)}\le C_*t^{-\frac{N}{2}\left(\frac{1}{\alpha}-\frac{1}{\beta}\right)}
\ \ \textrm{for}\ \ 0<t<t_0.
\]
\end{enumerate}
\end{proposition}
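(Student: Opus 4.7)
The plan is to prove (i) by a decomposition-of-space argument that reduces the uniformly local estimate to the classical Gaussian $L^\alpha$--$L^\beta$ bound on $\RN$, and then to derive (ii) from (i) by density of $BUC(\RN)$ in $\calL^\alpha_{\rm ul}(\RN)$.

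For (i), set $\gamma:=\frac{N}{2}(\frac{1}{\alpha}-\frac{1}{\beta})$. By translation invariance of $S(t)$ it is enough to bound $\|S(t)\phi\|_{L^\beta(B(0,1))}$ by the desired expression. I would partition $\RN$ into unit cubes $Q_k$ centred at $k\in\mathbb{Z}^N$ and write $\phi=\sum_k \phi\chi_{Q_k}$, so that
$S(t)\phi(x)=\sum_k \int_{Q_k} K(x,z,t)\phi(z)\,dz.$
For indices with $|k|\le K_0$ (a constant depending only on $t_0$), Hölder's inequality together with the classical Gaussian convolution estimate applied to $\phi\chi_{Q_k}\in L^\alpha(\RN)$ delivers a contribution $\lesssim t^{-\gamma}\|\phi\|_{L^\alpha_{\rm ul}(\RN)}$; the sum over such $k$ is finite. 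For $|k|>K_0$ and $x\in B(0,1)$, the Gaussian decay $K(x,z,t)\lesssim t^{-N/2}\exp(-c|k|^2/t)$ gives an exponentially small factor that absorbs any polynomial loss, so after summing the tail contribution is also $\lesssim t^{-\gamma}\|\phi\|_{L^\alpha_{\rm ul}(\RN)}$, uniformly in $0<t<t_0$. Taking the $L^\beta(B(0,1))$-norm in $x$ and then the supremum over translates yields (i).

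For (ii) we now have $\alpha<\beta$, hence $\gamma>0$. Given $\phi\in\calL^\alpha_{\rm ul}(\RN)$ and $C_*>0$, choose $\psi\in BUC(\RN)$ with $\|\phi-\psi\|_{L^\alpha_{\rm ul}(\RN)}\le C_*/(2C)$, where $C$ is the constant from (i), and split $S(t)\phi=S(t)\psi+S(t)(\phi-\psi)$. The maximum principle yields $\|S(t)\psi\|_{L^\infty(\RN)}\le\|\psi\|_{L^\infty(\RN)}$, so that $\|S(t)\psi\|_{L^\beta_{\rm ul}(\RN)}$ is bounded by a constant depending only on $\psi$; since $\gamma>0$, this constant is eventually dominated by $\frac{C_*}{2}t^{-\gamma}$ once $t$ is small enough (the threshold is allowed to depend on $\psi$, hence on $\phi$). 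Applying (i) to $\phi-\psi$ bounds the remaining piece by $C\|\phi-\psi\|_{L^\alpha_{\rm ul}(\RN)}t^{-\gamma}\le\frac{C_*}{2}t^{-\gamma}$, and summing the two estimates finishes (ii).

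The main technical obstacle is the book-keeping for the Gaussian tails in (i): one has to verify that $\sum_{|k|>K_0}\exp(-c|k|^2/t)\|\phi\chi_{Q_k}\|_{L^\alpha}$ is controlled by $t^{-\gamma}\|\phi\|_{L^\alpha_{\rm ul}(\RN)}$ uniformly in $0<t<t_0$. This is where the cut-off $t_0$ is essential, since for large $t$ the Gaussian decay weakens and the factor $t^{-N/2}$ cannot be absorbed as simply; restricting to $0<t<t_0$ makes the series uniformly convergent and yields a constant independent of $\phi$.
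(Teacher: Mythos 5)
Your argument is correct. Note that the paper does not prove Proposition~\ref{S2P2} at all: it simply cites \cite[Propositions~2.4 and 2.5]{GM21}, which in turn rest on \cite[Corollary~3.1]{MT06} and \cite[Lemma~8]{BC96}. Your proof of (i) — unit decomposition, the classical $L^\alpha$--$L^\beta$ Gaussian bound for the near cubes, and absorption of the Gaussian tails (including the $t^{-N/2}$ prefactor) uniformly on $0<t<t_0$ — is exactly the standard mechanism behind those references, and your derivation of (ii) from (i) by $BUC$-approximation, the maximum principle, and the fact that $\gamma>0$ lets a $\psi$-dependent constant be dominated by $\tfrac{C_*}{2}t^{-\gamma}$ for small $t$, is also the expected route; the only bookkeeping point worth making explicit is that the $t_0$ produced in (ii) should be taken no larger than the $t_0$ from (i).
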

A proof of Proposition~\ref{S2P2},  which is based on \cite[Corollary~3.1]{MT06} and \cite[Lemma~8]{BC96}, can be found in \cite[Propositions~2.4 and 2.5]{GM21}.
Note that $C_*>0$ in (ii) can be chosen arbitrary small.

\begin{proposition}\label{S2P3}
Let $1\le \alpha<\infty$.
The following are equivalent:
\begin{enumerate}
\item $\phi\in\calL^{\alpha}_{\rm ul}(\RN)$.
\item $\lim_{|y|\to 0}\left\|\phi(\,\cdot\,+y)-\phi(\,\cdot\,)\right\|_{L^{\alpha}_{\rm ul}(\RN)}=0$.
\item $\lim_{t\to 0}\left\|S(t)\phi-\phi\right\|_{L^{\alpha}_{\rm ul}(\RN)}=0$.
\end{enumerate}
\end{proposition}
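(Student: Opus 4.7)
The plan is to prove the three implications cyclically, (i) $\Rightarrow$ (ii) $\Rightarrow$ (iii) $\Rightarrow$ (i), using repeatedly the translation invariance $\|\phi(\,\cdot\,+y)\|_{L^{\alpha}_{\rm ul}(\RN)}=\|\phi\|_{L^{\alpha}_{\rm ul}(\RN)}$ and the density of $BUC(\RN)$ in $\mathcal{L}^{\alpha}_{\rm ul}(\RN)$.

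For (i) $\Rightarrow$ (ii), I would first handle $\phi\in BUC(\RN)$: uniform continuity gives $\|\phi(\,\cdot\,+y)-\phi(\,\cdot\,)\|_{L^{\infty}(\RN)}\to 0$ as $|y|\to 0$, and since each unit ball has fixed volume this bounds the $L^{\alpha}_{\rm ul}$ norm of the difference. For general $\phi\in\mathcal{L}^{\alpha}_{\rm ul}(\RN)$ and $\varepsilon>0$, pick $\psi\in BUC(\RN)$ with $\|\phi-\psi\|_{L^{\alpha}_{\rm ul}}<\varepsilon/3$; then
\[
\|\phi(\,\cdot\,+y)-\phi(\,\cdot\,)\|_{L^{\alpha}_{\rm ul}}\le 2\|\phi-\psi\|_{L^{\alpha}_{\rm ul}}+\|\psi(\,\cdot\,+y)-\psi(\,\cdot\,)\|_{L^{\alpha}_{\rm ul}},
\]
where the first term uses translation invariance and the second is $<\varepsilon/3$ for $|y|$ small, by the $BUC$ case.

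For (ii) $\Rightarrow$ (iii), I would change variables $y=x+2\sqrt{t}\,w$ in the kernel representation to obtain
\[
S(t)\phi(x)-\phi(x)=\pi^{-N/2}\int_{\RN}e^{-|w|^2}\bigl(\phi(x+2\sqrt{t}\,w)-\phi(x)\bigr)\,dw,
\]
and then apply Minkowski's integral inequality in $L^{\alpha}_{\rm ul}(\RN)$:
\[
\|S(t)\phi-\phi\|_{L^{\alpha}_{\rm ul}}\le \pi^{-N/2}\int_{\RN}e^{-|w|^2}\,\|\phi(\,\cdot\,+2\sqrt{t}\,w)-\phi(\,\cdot\,)\|_{L^{\alpha}_{\rm ul}}\,dw.
\]
The integrand is dominated by $2e^{-|w|^2}\|\phi\|_{L^{\alpha}_{\rm ul}}$ and, by (ii), tends to $0$ pointwise in $w$ as $t\to 0$. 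Dominated convergence finishes this implication.

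For (iii) $\Rightarrow$ (i), it suffices to show that $S(t)\phi\in BUC(\RN)$ for every fixed $t>0$, since then (iii) exhibits $\phi$ as an $L^{\alpha}_{\rm ul}$-limit of $BUC$ functions, giving $\phi\in\mathcal{L}^{\alpha}_{\rm ul}(\RN)$. Boundedness of $S(t)\phi$ follows from Proposition~\ref{S2P2}(i) applied with $\beta=\infty$. Uniform continuity comes from differentiating under the integral: since $|\nabla_x K(x,y,t)|\lesssim t^{-1/2}K(x,y,2t)$, splitting $\RN$ into translates of the unit ball and bounding $\|\phi\|_{L^1(B(k,1))}$ by $\|\phi\|_{L^1_{\rm ul}}$ yields a bound on $|\nabla_x S(t)\phi(x)|$ by $C(t)\|\phi\|_{L^1_{\rm ul}}$ independent of $x$, hence $S(t)\phi$ is globally Lipschitz. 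The main obstacle I anticipate lies precisely in this last step: the standard regularising property of the heat semigroup must be verified in the uniformly local framework, since an $L^{\alpha}_{\rm ul}(\RN)$ function is neither globally integrable nor a priori bounded, so one must be careful to assemble the Gaussian decay against the per-cell $L^1$ bound before concluding that $S(t)\phi\in BUC(\RN)$.
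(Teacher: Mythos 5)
Your argument is correct. The paper does not actually prove Proposition~\ref{S2P3}: it only refers the reader to \cite[Proposition~2.2]{MT06}, so there is no in-paper proof to compare against; your cyclic scheme --- density of $BUC(\RN)$ plus translation invariance of $\|\cdot\|_{L^{\alpha}_{\rm ul}(\RN)}$ for (i)$\Rightarrow$(ii), Minkowski's integral inequality and dominated convergence for (ii)$\Rightarrow$(iii), and the smoothing property $S(t)\phi\in BUC(\RN)$ for (iii)$\Rightarrow$(i) --- is the standard route and is essentially the argument of the cited reference. Two small points worth stating explicitly: Minkowski's integral inequality is valid for the uniformly local norm because it holds on each ball $B(y,1)$ and one then takes the supremum over $y$; and in the last step the bound $|\nabla_xK(x,y,t)|\lesssim t^{-1/2}K(x,y,2t)$ combined with the cell decomposition indeed gives a global Lipschitz bound for $S(t)\phi$ in terms of $\|\phi\|_{L^1_{\rm ul}(\RN)}$, so the step you flagged as the main obstacle goes through.
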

Fundamental properties of $S(t)$ in $L^{\alpha}_{\rm ul}(\RN)$ were studied in \cite{MT06}.
For details of Proposition~\ref{S2P3}, see \cite[Proposition~2.2]{MT06}.

\begin{proposition}\label{S2P4}{\upshape(cf. \cite[Lemma~2.4]{FI18})}
Let $C\ge 0$. The following {\upshape(i)} and {\upshape(ii)} hold:
\begin{enumerate}
\item Suppose that $J: [C,\infty)\to[0,\infty)$ is a convex function. 
If $\phi\in L^1_{\rm ul}(\RN)$, $J(\phi)\in L^1_{\rm ul}(\RN)$ and $\phi\ge C$ in $\RN$, then
\[
J(S(t)[\phi](x))\le S(t)[J(\phi)](x) \ \ \text{in $\RN\times(0,\infty)$.}
\]
\item Suppose that $K: [C,\infty)\to[0,\infty)$ is a concave function. 
If $\phi\in L^1_{\rm ul}(\RN)$ and $\phi\ge C$ in $\RN$, then
\[
K(S(t)[\phi](x))\ge S(t)[K(\phi)](x) \ \ \text{in $\RN\times(0,\infty)$.}
\]
\end{enumerate}
\end{proposition}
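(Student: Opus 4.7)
The plan is to reduce both statements to the classical Jensen inequality for integrals against a probability measure, applied pointwise in $(x,t)$. For each fixed $(x,t)\in\RN\times(0,\infty)$, the heat kernel $y\mapsto K(x,y,t)=(4\pi t)^{-N/2}\exp(-|x-y|^2/4t)$ is strictly positive and satisfies $\int_{\RN}K(x,y,t)\,dy=1$, so $d\mu_{x,t}(y):=K(x,y,t)\,dy$ is a Borel probability measure on $\RN$ and
\[
S(t)\phi(x)=\int_{\RN}\phi(y)\,d\mu_{x,t}(y)
\]
is exactly the expectation of $\phi$ under $\mu_{x,t}$. This is the only structural fact I would need.

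Before invoking Jensen, I would check that every quantity in each inequality is finite. In (i), $\phi\in L^1_{\rm ul}(\RN)$ together with the Gaussian decay of $K$ gives $S(t)\phi(x)<\infty$, and $J(\phi)\in L^1_{\rm ul}(\RN)$ gives $S(t)[J(\phi)](x)<\infty$, for all $(x,t)$. The hypothesis $\phi\ge C$ and positivity of $K$ yield $S(t)\phi(x)\ge C\int_{\RN}K(x,y,t)\,dy=C$, so $J(S(t)\phi(x))$ is defined. In (ii), I do not assume $K(\phi)\in L^1_{\rm ul}$, but a nonnegative concave function on $[C,\infty)$ grows at most linearly, so $K(\phi)\lesssim 1+\phi\in L^1_{\rm ul}(\RN)$ automatically, which makes $S(t)[K(\phi)](x)$ finite; again $S(t)\phi(x)\ge C$, so $K(S(t)\phi(x))$ is defined.

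With everything finite, the conclusions are immediate from the classical Jensen inequality for a probability measure. For (i), convexity of $J$ gives
\[
J\!\left(\int_{\RN}\phi\,d\mu_{x,t}\right)\le\int_{\RN}J(\phi)\,d\mu_{x,t},
\]
which is $J(S(t)\phi(x))\le S(t)[J(\phi)](x)$; for (ii), concavity of $K$ reverses the inequality to $K(S(t)\phi(x))\ge S(t)[K(\phi)](x)$. There is no genuine obstacle; the only mild subtlety is the integrability bookkeeping for unbounded $J$ or $K$, which can be handled by truncating to $J_n:=\min\{J,n\}$ (respectively $K_n:=\min\{K,n\}$), applying Jensen in the bounded case, and passing to the limit by monotone convergence on both sides.
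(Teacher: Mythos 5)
Your argument is correct and is exactly the paper's approach: the paper proves this proposition by viewing $K(x,y,t)\,dy$ as a probability measure and invoking Jensen's inequality (deferring details to \cite[Proposition~2.9]{GM21}), which is what you do. One small aside: your final truncation remark is both unnecessary (the hypotheses already make both sides finite) and slightly off, since $\min\{J,n\}$ is not convex; but the main argument does not rely on it.
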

Proposition~\ref{S2P4} follows from Jensen's inequality.
See \cite[Proposition~2.9]{GM21} for a proof of Proposition~\ref{S2P4}.
\smallskip

Hereafter in this section we collect useful lemmas.
\begin{lemma}
\label{S2L0}
Let $C>0$. If $u\in\mathcal{L}^{1}_{\rm ul}(\RN)$, then 
$\max\{u,C\}\in\mathcal{L}^{1}_{\rm ul}(\RN)$.
\end{lemma}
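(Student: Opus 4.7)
The plan is to exploit the fact that the truncation map $t \mapsto \max\{t, C\}$ is $1$-Lipschitz, together with the definition of $\mathcal{L}^1_{\rm ul}(\RN)$ as the $L^1_{\rm ul}$-closure of $BUC(\RN)$.

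First, since $u \in \mathcal{L}^1_{\rm ul}(\RN)$, pick an approximating sequence $\{\phi_n\} \subset BUC(\RN)$ with $\|\phi_n - u\|_{L^1_{\rm ul}(\RN)} \to 0$. Next, I would observe that $\psi_n := \max\{\phi_n, C\}$ still lies in $BUC(\RN)$: boundedness is clear since $\|\psi_n\|_\infty \le \max\{\|\phi_n\|_\infty, C\}$, and uniform continuity follows from the pointwise bound $|\max\{a,C\} - \max\{b,C\}| \le |a-b|$ applied to $a = \phi_n(x)$, $b = \phi_n(y)$.

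The crucial step is to show $\psi_n \to \max\{u, C\}$ in $L^1_{\rm ul}(\RN)$. Using the same $1$-Lipschitz inequality pointwise,
\[
\bigl|\max\{\phi_n(x), C\} - \max\{u(x), C\}\bigr| \le |\phi_n(x) - u(x)| \quad \text{for a.e. } x \in \RN,
\]
so integrating over any ball $B(y,1)$ and taking the supremum over $y$ yields
\[
\bigl\|\psi_n - \max\{u, C\}\bigr\|_{L^1_{\rm ul}(\RN)} \le \|\phi_n - u\|_{L^1_{\rm ul}(\RN)} \to 0.
\]
Since $\psi_n \in BUC(\RN)$, this exhibits $\max\{u, C\}$ as an $L^1_{\rm ul}$-limit of $BUC$ functions, hence $\max\{u, C\} \in \mathcal{L}^1_{\rm ul}(\RN)$.

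There is no real obstacle here: the only thing to verify carefully is that $\psi_n$ actually belongs to $BUC(\RN)$ (not just $L^\infty$), which is immediate from the Lipschitz property of the truncation. Everything else is a direct consequence of the definition of $\mathcal{L}^1_{\rm ul}(\RN)$.
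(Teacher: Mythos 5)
Your proof is correct and is essentially identical to the paper's: both take a $BUC$ approximating sequence, truncate it by $\max\{\cdot,C\}$, and use the pointwise inequality $|\max\{a,C\}-\max\{b,C\}|\le|a-b|$ to pass the $L^1_{\rm ul}$ convergence through the truncation. Your write-up just makes the $1$-Lipschitz justification of $\psi_n\in BUC(\RN)$ slightly more explicit than the paper does.
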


\begin{proof}
Since $u\in\mathcal{L}^{1}_{\rm ul}(\RN)$, there exists a sequence 
$\{u_n\}\subset BUC(\RN)$ such that $u_n \to u$ in $L^1_{\rm ul}(\RN)$ 
as $n\to\infty$. 
Let $\{v_n (x)\}_{n=1}^{\infty}$ be defined by $v_n (x):=\max\{u_n (x),C\}$. 
We see that $\{v_n\}\subset BUC(\RN)$ and obtain
\begin{eqnarray*}
|\max\{u,C\}-v_n |\le|u-{u_n}|\to0 \ \ \text{in $L^1_{\rm ul}(\RN)$ as $n\to\infty$.}
\end{eqnarray*}
Thus $\max\{u,C\}\in\mathcal{L}^{1}_{\rm ul}(\RN)$.
\end{proof}

\begin{lemma}\label{S2L1}
Let $q\ge 1$ and $\e>0$. If $f\in X_q$, then $F(u)\lesssim u^{-1/(q-1+\e)}$ for large $u>0$.
\end{lemma}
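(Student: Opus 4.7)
The plan is to bound $F(u)$ from above by exploiting the behavior of the product $f(u)F(u)$, which is controlled via the hypothesis $f'(u)F(u)\to q$ through the identity $(fF)'=f'F-1$.

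First, I would fix $\e>0$ and use the defining property of $f\in X_q$ to choose $U_0>0$ so large that $f'(u)F(u)\le q+\e/2$ for all $u\ge U_0$. Differentiating the product $fF$ gives $(f(u)F(u))'=f'(u)F(u)-1\le q-1+\e/2$ on $[U_0,\infty)$. Integrating from $U_0$ to $u$ then yields
\[
f(u)F(u)\le f(U_0)F(U_0)+\bigl(q-1+\tfrac{\e}{2}\bigr)(u-U_0)\le (q-1+\e)\,u
\]
for all $u$ larger than some $U_1\ge U_0$ (the constant term is absorbed into the extra $\e/2\cdot u$).

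Next, I would convert this bound on $fF$ into a differential inequality for $\log(1/F)$. Since $F'=-1/f$, one has $(\log(1/F(u)))'=1/(f(u)F(u))\ge 1/((q-1+\e)u)$ on $[U_1,\infty)$. Note that $q\ge 1$ and $\e>0$ ensure $q-1+\e>0$, so this is meaningful. Integrating from $U_1$ to $u$ gives
\[
\log\frac{1}{F(u)}-\log\frac{1}{F(U_1)}\ge \frac{1}{q-1+\e}\log\frac{u}{U_1},
\]
which rearranges to $F(u)\le F(U_1)\bigl(U_1/u\bigr)^{1/(q-1+\e)}\lesssim u^{-1/(q-1+\e)}$ for all $u\ge U_1$, as desired.

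There is no real obstacle here: the argument is a direct consequence of the identity $(fF)'=f'F-1$ combined with the asymptotics defining $X_q$. The only minor point to watch is that the small slack $\e/2$ in the first step must be large enough to absorb the integration constant $f(U_0)F(U_0)-(q-1+\e/2)U_0$; this is why the sharper bound $q-1+\e/2$ is weakened to $q-1+\e$ only after $u$ is taken sufficiently large.
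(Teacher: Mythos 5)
Your proof is correct. It reaches the same conclusion as the paper but organizes the two integrations differently: the paper turns $f'(u)F(u)\le q+\e$ into the autonomous differential inequality $F''/F'\ge (q+\e)F'/F$, integrates once to get $-F'(u)\gtrsim F(u)^{q+\e}$, and then separates variables in $F$; you instead integrate the identity $(fF)'=f'F-1$ to get the non-autonomous linear bound $f(u)F(u)\le (q-1+\e)u$ and then separate variables in $u$ via $(\log(1/F))'=1/(fF)$. The two routes are equally elementary and equally rigorous; yours has the minor advantage of making the constant-absorption step explicit (the $\e/2$ slack), and it matches the technique the paper itself uses elsewhere (e.g.\ in Lemma~\ref{S7L1} and in the $q=1$ case of the proof of Theorem~\ref{THE}~(i), where $(fF)'=q-1+\eta$ is integrated). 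The paper's version is marginally shorter but leaves the final integration of $-F'\gtrsim F^{q+\e}$ to the reader. No gaps in either argument.
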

\begin{proof}
Since $f\in X_q$, we see that $f'(u)F(u)\le q+\e$ for large $u>0$.
By this together with $f'(u) =F''(u)/F'(u)^2$ and $F'(u)=-1/f(u)<0$ we have
\[
\frac{F''(u)}{F'(u)}\ge(q+\e)\frac{F'(u)}{F(u)},
\]
which implies that $-F'(u)\gtrsim F(u)^{q+\e}$ for large $u>0$.
Then we obtain
$F(u)\lesssim u^{-1/(q-1+\e)}$ for large $u>0$.
\end{proof}

\begin{lemma}\label{S2L2}
Let $N \ge 1$, $\beta>0$ and $h_\beta(u):=F_\beta(u)^{-N/2}$.
Put $\tilde{h}_\beta(u):=(N/4)^{N/2}u[\log(u+e)]^{-N\beta/2}$. 
Then $\tilde{h}_\beta(u)\le h^{-1}_\beta(u)$ for large $u>0$.
\end{lemma}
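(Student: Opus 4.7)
The plan is to verify the inequality by applying $h_\beta$ (which is strictly increasing since $F_\beta$ is strictly decreasing) to both sides and reducing to a lower bound for $F_\beta$. Specifically, $\tilde{h}_\beta(u)\le h_\beta^{-1}(u)$ is equivalent to $h_\beta(\tilde{h}_\beta(u))\le u$, i.e.
\[
F_\beta(\tilde{h}_\beta(u))\ge u^{-2/N}\qquad\text{for large $u>0$.}
\]
So the task reduces to estimating $F_\beta$ from below at the point $w=\tilde{h}_\beta(u)=(N/4)^{N/2}u[\log(u+e)]^{-N\beta/2}$.

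Next I would extract a clean lower bound for $F_\beta(w)$ for large $w$ by truncating: for any fixed $C>1$,
\[
F_\beta(w)\ge \int_w^{Cw}\frac{d\tau}{\tau^{1+2/N}[\log(\tau+e)]^\beta}
\ge \frac{1}{[\log(Cw+e)]^\beta}\int_w^{Cw}\tau^{-1-2/N}d\tau
= \frac{N}{2}\cdot\frac{1-C^{-2/N}}{w^{2/N}[\log(Cw+e)]^\beta},
\]
where I used that $\tau\mapsto\log(\tau+e)$ is increasing. Now substitute $w=\tilde{h}_\beta(u)$. Then $w^{2/N}=(N/4)u^{2/N}[\log(u+e)]^{-\beta}$, so
\[
F_\beta(w)\ge 2(1-C^{-2/N})\cdot u^{-2/N}\cdot\left(\frac{\log(u+e)}{\log(Cw+e)}\right)^{\!\beta}.
\]

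Finally, I would handle the logarithmic ratio. Writing $\log(Cw+e)=\log u+O(\log\log u)$ as $u\to\infty$ (since $Cw=C(N/4)^{N/2}u[\log(u+e)]^{-N\beta/2}$), the ratio $\log(u+e)/\log(Cw+e)\to 1$. Therefore, for any $\e>0$ and all sufficiently large $u$,
\[
F_\beta(\tilde{h}_\beta(u))\ge 2(1-C^{-2/N})(1-\e)\cdot u^{-2/N}.
\]
Choosing $C>2^{N/2}$ so that $2(1-C^{-2/N})>1$, and then $\e$ small enough, yields $F_\beta(\tilde{h}_\beta(u))\ge u^{-2/N}$ for large $u$, which is the desired inequality.

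The only mildly delicate point is the asymptotics of $\log(Cw+e)/\log(u+e)$, but this is straightforward because $\log w=\log u-(N\beta/2)\log\log(u+e)+O(1)$, which is $\log u(1+o(1))$; everything else in the argument is an explicit integral estimate. Choosing the truncation constant $C$ large enough to beat the factor $(N/4)^{N/2}$ in $\tilde{h}_\beta$ is the reason the constant $N/4$ (rather than $N/2$) appears in the statement.
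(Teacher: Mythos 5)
Your proof is correct. The overall skeleton is the same as the paper's: both reduce the claim to $h_\beta(\tilde h_\beta(u))\le u$, i.e.\ to a lower bound for $F_\beta$ at the point $\tilde h_\beta(u)$, and then use monotonicity of $h_\beta$. The difference lies in how that lower bound is produced. The paper recognizes an exact antiderivative: it bounds the integrand of $F_\beta$ from below by $\frac{N}{4}\left(-\frac{d}{d\tau}\left(\tau^{-2/N}[\log(\tau+e)]^{-\beta}\right)\right)$ for large $\tau$ (the factor $\frac{N}{4}\cdot\frac{2}{N}=\frac12<1$ absorbs the extra $\beta$-term), obtaining the clean pointwise bound $F_\beta(u)\ge\frac{N}{4}u^{-2/N}[\log(u+e)]^{-\beta}$, and then composes. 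You instead truncate the integral to $[w,Cw]$ and pull out the logarithm by monotonicity, which yields the constant $\frac{N}{2}(1-C^{-2/N})$ — asymptotically better than $N/4$ — at the price of the slightly worse factor $[\log(Cw+e)]^{-\beta}$, which you then absorb via the limit $\log(Cw+e)/\log(u+e)\to1$. Your approach buys a sharper constant in the limit and avoids having to guess the antiderivative, but requires the extra asymptotic step for the log ratio and the choice $C>2^{N/2}$; the paper's is a one-line computation once the antiderivative is spotted. Both are complete, and your closing remark correctly identifies why the constant $N/4$ (rather than $N/2$) in $\tilde h_\beta$ is what makes the argument close.
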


\begin{proof}
Let $u>0$ be sufficiently large. Then it follows that
\[
F_\beta(u)=\int_u^\infty \frac{d\tau}{\tau^{1+2/N}[\log(\tau+e)]^\beta}
\ge\frac{N}{4}\int_u^\infty \frac{\frac{2}{N}\log(\tau+e)+\frac{\beta\tau}{\tau+e}}{\tau^{1+2/N}[\log(\tau+e)]^{\beta+1}}d\tau
=\frac{N}{4}u^{-2/N}[\log(u+e)]^{-\beta}.
\]
Hence, we obtain $h_\beta(u)\le(4/N)^{N/2}u[\log(u+e)]^{N\beta/2}$. We see that
\[
h_\beta(\tilde{h}_\beta(u))\le\left(\frac{4}{N}\right)^{N/2}\tilde{h}(u)[\log(\tilde{h}(u)+e)]^{\frac{N}{2}\beta}=u[\log(u+e)]^{-\frac{N}{2}\beta}[\log(\tilde{h}(u)+e)]^{\frac{N}{2}\beta}\le u.
\]
Since $h_\beta$ is increasing, $\tilde{h}_\beta(u)\le h^{-1}_\beta(u)$ for large $u>0$.
\end{proof}

\begin{lemma}\label{S2L3}
Let $N\ge1$.
Suppose that $f$ satisfies all the assumptions of Theorem~\ref{THC}.
Let $h(u):=F(u)^{-N/2}$.
Then there exists $0<\delta'<N/2$ such that $h^{-1}(u)\lesssim u[\log(u+e)]^{\delta'}$ for large $u>0$.
\end{lemma}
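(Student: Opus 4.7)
The plan is to extract a lower bound on $h(u) = F(u)^{-N/2}$ from assumption (ii) of Theorem~\ref{THC} and then invert it to get an upper bound on $h^{-1}$. Note that assumption (i) will play no role here — only assumption (ii) and the growth/positivity information in (\ref{f}) are needed.

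First, from (ii), $F(u)\le C_2\, u^{-2/N}[\log(u+e)]^{\delta}$ for $u\ge C_1$, so
\[
h(u)=F(u)^{-N/2}\ge C_2^{-N/2}\,u\,[\log(u+e)]^{-N\delta/2}\qquad\textrm{for}\ u\ge C_1.
\]
I would introduce the comparison function $g(u):=C_2^{-N/2}u[\log(u+e)]^{-N\delta/2}$. A short computation (the same one used to verify monotonicity of $\tilde{h}_\beta$ in Lemma~\ref{S2L2}) shows $g$ is eventually strictly increasing to $\infty$, so $g^{-1}$ makes sense for large arguments. Since $h$ is increasing and $h\ge g$ for large $u$, I get
\[
h^{-1}(v)\le g^{-1}(v)\qquad\textrm{for large}\ v>0,
\]
which reduces everything to bounding $g^{-1}(v)$.

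The main (and only nontrivial) step is inverting $g$. Writing $u=g^{-1}(v)$, the definition gives $u=C_2^{N/2}\,v\,[\log(u+e)]^{N\delta/2}$. This forces $u\ge C_2^{N/2}v$, hence $u\to\infty$ with $v$. I would now run a one-line bootstrap on logarithms: taking logs,
\[
\log u=\log v+\tfrac{N\delta}{2}\log\log(u+e)+O(1),
\]
and substituting any crude initial bound such as $\log u\le 2\log v$ yields $\log u=\log v+O(\log\log v)$; in particular for any prescribed $\eta>0$, $\log(u+e)\le(1+\eta)\log(v+e)$ for all sufficiently large $v$. Plugging this back into the defining equation,
\[
u\le C_2^{N/2}(1+\eta)^{N\delta/2}\, v\,[\log(v+e)]^{N\delta/2}\qquad\textrm{for large}\ v.
\]

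Since the hypothesis of Theorem~\ref{THC} includes $0<\delta<1$, the exponent $\delta':=N\delta/2$ satisfies $0<\delta'<N/2$, and the display above gives exactly $h^{-1}(v)\lesssim v[\log(v+e)]^{\delta'}$ for large $v$. The only technical obstacle is the logarithmic bootstrap, which is elementary; the heart of the argument is really the implication ``$F$ decays like $u^{-2/N}(\log u)^{\delta}$'' $\Rightarrow$ ``$h^{-1}$ grows at most like $v(\log v)^{N\delta/2}$''.
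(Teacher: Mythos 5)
Your proof is correct and follows essentially the same route as the paper's: both extract the lower bound $h(u)\ge C_2^{-N/2}\,u\,[\log(u+e)]^{-N\delta/2}$ from assumption (ii) of Theorem~\ref{THC} and exploit the fact that the logarithm of the comparison function is asymptotic to the logarithm of its argument. The only difference is one of execution, not of idea: the paper guesses $\hat h(u)=C_2^{N/2}u[\log(u+e)]^{N\delta/2+\e}$ and verifies $h(\hat h(u))\ge u$ for large $u$, whereas you invert the comparison function directly via a logarithmic bootstrap, which even yields the marginally sharper exponent $\delta'=N\delta/2$ (still in $(0,N/2)$ since $\delta<1$, so the conclusion is unaffected).
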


\begin{proof}
Let $\delta$ be given by the assumption of Theorem~\ref{THC}.
Since $0<\delta<1$, we choose $\e>0$ such that $N\delta/2+\e<N/2$.
Put $\delta':=N\delta/2+\e>0$ and $\hat{h}(u):=C_2^{N/2}u[\log(u+e)]^{\delta'}$.
Since $h(u)\ge C_2^{-N/2}u[\log(u+e)]^{-N\delta/2}$ for $u\ge C_1$, we obtain
\begin{equation}\label{eq41}
h(\hat{h}(u))\ge C_2^{-N/2}\hat{h}(u)[\log(\hat{h}(u)+e)]^{-\frac{N}{2}\delta}
=u\left(\frac{\log(u+e)}{\log(\hat{h}(u)+e)}\right)^{\frac{N}{2}\delta}
[\log(u+e)]^{\e}\ \ \textrm{for large}\ u>0.
\end{equation}
We see that
\[
\frac{\log u}{\log\hat{h}(u)}=\frac{\log u}{\log u+\delta'\log(\log(u+e))+\log C_2^{N/2}}\to1
\ \ \text{as $u\to\infty$,}
\]
which yields
\[
\left(\frac{\log(u+e)}{\log(\hat{h}(u)+e)}\right)^{\frac{N}{2}\delta}
=\left(\frac{\log(u+e)}{\log u}
\frac{\log u}{\log\hat{h}(u)}\frac{\log\hat{h}(u)}{\log(\hat{h}(u)+e)}\right)^{\frac{N}{2}\delta}\to1
\ \ \text{as $u\to\infty$.}
\]
By this together with \eqref{eq41} we have $h(\hat{h}(u))\ge u$ for large $u>0$. Since $h$ is increasing, $h^{-1}(u)\le\hat{h}(u)$ for large $u>0$.
\end{proof}

\section{Existence}
In this section a function $J$ satisfies the following:
\begin{multline}\label{J}
J\in C^2[0,\infty),\ \lim_{u\to\infty}J(u)=\infty,\ J(u)>0\ \textrm{for}\ u>0,\ J'(u)>0\  \textrm{for}\ u>0\\
\textrm{and $J'(u)$ is nondecreasing for large $u>0$.}\tag{J}
\end{multline}
The main theorem in this section is the following:
\begin{theorem}\label{TH1}
Let $N\ge 1$ and $u_0\ge0$.
Suppose that $f\in C[0,\infty)$, $f$ is nonnegative and $f$ is nondecreasing for $u>0$
and that $J$ satisfies (\ref{J}).
Suppose that there exist $\theta\in(0,1]$ and $\xi\ge0$ such that one of the following holds:
\begin{enumerate}
\item $J(u_0)\in L^1_{\rm ul}(\RN)$ and
\begin{equation}\label{TH1E1}
\lim_{\eta\to\infty}\tJ(\eta)\int_{\eta}^{\infty}\frac{\tf(\tau)J'(\tau)d\tau}{J(\tau)^{1+2/N}}=0,
\end{equation}
\item $J(u_0)\in \calL^1_{\rm ul}(\RN)$ and
\begin{equation}\label{TH1E2}
\limsup_{\eta\to\infty}\tJ(\eta)\int_{\eta}^{\infty}\frac{\tf(\tau)J'(\tau)d\tau}{J(\tau)^{1+2/N}}<\infty,
\end{equation}
\end{enumerate}
where
\begin{equation}\label{TH1E0}
\tf(u):=\sup_{\xi\le\tau\le u}\frac{f(\tau)}{J(\tau)^{\theta}}\ \ \textrm{and}\ \ 
\tJ(u):=\sup_{\xi\le\tau\le u}\frac{J'(\tau)}{J(\tau)^{1-\theta}}.
\end{equation}
Then (\ref{S1E1}) admits a local in time nonnegative solution $u(t)$ for $0<t<T$.
Moreover, there exists $C_0>0$ such that
\begin{equation}\label{TH1E2+}
\left\|J(u(t))\right\|_{L^1_{\rm ul}(\RN)}\le C_0
\ \ \textrm{for}\ 0<t<T.
\end{equation}
\end{theorem}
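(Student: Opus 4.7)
The plan is to apply the monotone iteration scheme of Proposition~\ref{S2P1}: it suffices to exhibit a nonnegative supersolution $\bu$ lying in $L^\infty((0,T),L^1_{\rm ul}(\RN))\cap L^\infty_{\rm loc}((0,T),L^\infty(\RN))$ with $J(\bu(t))$ uniformly bounded in $L^1_{\rm ul}(\RN)$ on $(0,T)$; since the solution $u$ produced by iteration satisfies $0\le u\le \bu$, monotonicity of $J$ will give (\ref{TH1E2+}). The key idea, which decouples $J$ from $f$, is to look for $\bu$ of the form
\[
\bu(x,t) \;:=\; J^{-1}\bigl(A\,S(t)[J(u_0)](x) + \gamma(t)\bigr),
\]
for some constant $A>1$ and some increasing $\gamma$ with $\gamma(0)=0$ to be determined. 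With this ansatz, the $L^1_{\rm ul}$ bound on $J(\bu(t))$ is controlled by $A\|J(u_0)\|_{L^1_{\rm ul}(\RN)}+\gamma(t)$ for free; and, because $J$ is eventually convex by (\ref{J}), Jensen's inequality (Proposition~\ref{S2P4}), applied after replacing $u_0$ by $\max\{u_0,C\}$ for large $C$ (permissible in case (ii) by Lemma~\ref{S2L0}), yields $J(S(t)u_0)\le S(t)J(u_0)$, so $\bu\ge S(t)u_0$ automatically.

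It then remains to prove
\[
\int_0^t S(t-s)\,f(\bu(s))\,ds \;\le\; \bu(t)-S(t)u_0 \qquad \text{on } (0,T).
\]
Using the definitions (\ref{TH1E0}), for $\bu$ sufficiently large we have $f(\bu)\le \tf(\bu)J(\bu)^\theta$ and $J'(\bu)\le \tJ(\bu)J(\bu)^{1-\theta}$; on the other side, the mean value theorem applied to $J^{-1}$ yields $\bu(t)-J^{-1}(AS(t)J(u_0))\ge \gamma(t)/J'(\bu(t))$. Combining these and factoring out the monotone quantity $\tJ(\|\bu(t)\|_{L^\infty_{\rm ul}})$ reduces the required inequality to an estimate of the form $\gamma(t)\gtrsim \tJ(\eta(t))\int_0^t \|S(t-s)[\tf(\bu(s))J(\bu(s))^\theta]\|_{L^\infty_{\rm ul}}\,ds$, where $\eta(t)$ is the $L^\infty_{\rm ul}$-norm of $J(\bu(t))$. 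Passing to $L^\infty_{\rm ul}$ through Proposition~\ref{S2P2} and performing the change of variable $w=J(\tau)$ inside the resulting integral makes the right-hand side appear exactly as a constant times the tail $\tJ(\eta)\int_\eta^\infty \tf(\tau)J'(\tau)J(\tau)^{-1-2/N}\,d\tau$ from hypotheses (\ref{TH1E1}) and (\ref{TH1E2}).

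In case (i), Proposition~\ref{S2P2}(i) gives $\|S(t)J(u_0)\|_{L^\infty_{\rm ul}}\lesssim t^{-N/2}\|J(u_0)\|_{L^1_{\rm ul}}$, so $\eta(t)\to\infty$ as $T\downarrow 0$ and the vanishing in (\ref{TH1E1}) closes the estimate after setting $\gamma(t)$ to a small multiple of that tail. Case (ii) is more delicate: only a finite $\limsup$ is available in (\ref{TH1E2}), so the prefactor converting the $L^1_{\rm ul}$ norm of $J(u_0)$ into the $L^\infty_{\rm ul}$ norm of $S(t)J(u_0)$ must itself be arbitrarily small; that smallness is supplied by Proposition~\ref{S2P2}(ii) together with Proposition~\ref{S2P3} and the hypothesis $J(u_0)\in \calL^1_{\rm ul}(\RN)$. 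Once the supersolution inequality holds, Proposition~\ref{S2P1} produces the required nonnegative solution $u$ with $J(u(t))\le J(\bu(t))$, and (\ref{TH1E2+}) follows.

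The main obstacle I anticipate is case (ii): the absorption of the nonlinear term into $\gamma$ does not close up to absolute constants, so the smallness of $C_*$ in Proposition~\ref{S2P2}(ii), the magnitude of the $\limsup$ in (\ref{TH1E2}), and the precise calibration of $\theta\in(0,1]$ and $\xi\ge 0$ inside (\ref{TH1E0}) all have to be balanced simultaneously on a sufficiently short time interval depending on $\|J(u_0)\|_{L^1_{\rm ul}}$ and on the rate of $\|S(t)J(u_0)-J(u_0)\|_{L^1_{\rm ul}}\to 0$. A secondary technical nuisance is that (\ref{J}) only ensures convexity of $J$ for large arguments, which forces the Jensen step to be carried out after the truncation permitted by Lemma~\ref{S2L0}.
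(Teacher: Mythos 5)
Your overall strategy coincides with the paper's: truncate $u_0$ so that $J$ is convex and the suprema in (\ref{TH1E0}) are realized, take $\bu=J^{-1}(\,\cdot\,)$ of a perturbation of $S(t)J(u_0)$, use Jensen's inequality (Proposition~\ref{S2P4}) to get $\bu\ge S(t)u_0$ and to push $S(t-s)$ through $J(\bu(s))^{\theta}$, and convert the time integral into the tail integral of (\ref{TH1E1})--(\ref{TH1E2}) by a change of variables. The gap is in how you create the slack between $\bu(t)$ and $S(t)u_0$: with $\bu=J^{-1}(A\,S(t)J(u_0)+\gamma(t))$ you derive the slack \emph{only} from the additive term, namely $\bu(t)-S(t)u_0\ge\gamma(t)/J'(\bu(t))$, and this cannot absorb the Duhamel term. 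Indeed the Duhamel term is bounded pointwise by $J(\bu(t))^{\theta}\int_0^t\|f(\bu(s))/J(\bu(s))^{\theta}\|_{\infty}\,ds$, so your supersolution inequality requires
\begin{equation*}
\gamma(t)\ \ge\ J'(\bu(t))\,J(\bu(t))^{\theta}\int_0^t\tf\bigl(\|\bu(s)\|_{\infty}\bigr)\,ds
\ =\ \frac{J'(\bu(t))}{J(\bu(t))^{1-\theta}}\cdot J(\bu(t))\cdot\int_0^t\tf\bigl(\|\bu(s)\|_{\infty}\bigr)\,ds,
\end{equation*}
whose right-hand side, after the change of variables, is a constant times $\eta(t)\cdot\tJ(\cdot)\int\tf J'J^{-1-2/N}d\tau$ with $\eta(t)=\|J(\bu(t))\|_{\infty}\gtrsim t^{-N/2}$. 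Your displayed reduction silently drops this factor $J(\bu(t))\le\eta(t)$ when you claim the right-hand side is ``exactly a constant times the tail.'' Since (\ref{TH1E1})--(\ref{TH1E2}) give no decay \emph{rate} for the tail, the product $\eta(t)\times(\text{tail})$ may blow up as $t\to0^{+}$, so no increasing $\gamma$ with $\gamma(0)=0$ (indeed no $\gamma$ bounded near $t=0$) can satisfy the requirement; and enlarging $\gamma$ only enlarges $\eta$, so the condition is self-defeating.

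The repair is to take the slack from a multiplicative factor, which is what the paper does with $\bu=J^{-1}((1+\sigma)S(t)J(u_1))$: concavity of $J^{-1}$ and the mean value theorem give $\bu(t)-J^{-1}(S(t)J(u_1))\ge\frac{\sigma}{1+\sigma}\,J(\bu(t))/J'(\bu(t))$, and this gap carries exactly the factor $J(\bu(t))/J'(\bu(t))$ needed to cancel $J(\bu(t))^{\theta}=\frac{J(\bu(t))}{J'(\bu(t))}\cdot\frac{J'(\bu(t))}{J(\bu(t))^{1-\theta}}$ in the Duhamel bound. What remains is the dimensionless quantity $\tJ(\|\bu(t)\|_{\infty})\int_0^t\tf(\|\bu(s)\|_{\infty})ds$, which the change of variables identifies with the tail in (\ref{TH1E1})--(\ref{TH1E2}); case (i) makes it $\le\sigma/(1+\sigma)$ for small $t$, and case (ii) does so after choosing $C_*$ small in Proposition~\ref{S2P2}~(ii), as you correctly anticipate. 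With that one change (use the $A$-part of your gap, not the $\gamma$-part; equivalently set $\gamma\equiv0$), the rest of your outline --- the truncation via Lemma~\ref{S2L0}, the two Jensen steps, and the role of $\calL^1_{\rm ul}(\RN)$ in case (ii) --- is sound.
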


\begin{proof}
Let $C_1>0$ be large such that $J(u)$ is convex for $u\ge C_1$.
Let $\sigma>0$ be a constant and $u_1(x):=\max\{u_0(x),C_1,1,\xi\}$.
We see that $J(u_1)\in L^1_{\rm ul}(\RN)$ in the case (i).
By Lemma~\ref{S2L0} we see that $J(u_1)\in \calL^1_{\rm ul}(\RN)$ in the case (ii).
Since $J$ is convex for $u\ge C_1$, in two cases (i) and (ii) $J(u_1)\in L^1_{\rm ul}(\RN)$ implies $u_1\in L^1_{\rm ul}(\RN)$, and hence it follows from Proposition~\ref{S2P2} that $\left\|S(t)u_1\right\|_{\infty}<\infty$ for $t>0$.
The first term of $\mathcal{F}$ is well defined.

We define
\begin{equation}\label{TH1E3}
\bu(t):=J^{-1}\left((1+\sigma)S(t)J(u_1)\right).
\end{equation}
We show that
\begin{equation}\label{TH1E3+}
\bu(t)\in L^{\infty}((0,T),L^1_{\rm ul}(\RN))\cap L_{\rm loc}^{\infty}((0,T),L^{\infty}(\RN))
\end{equation}
for sufficiently small $T>0$.
Since $J(u_1)\in L^1_{\rm ul}(\RN)$, it follows from
Proposition~\ref{S2P2}~(i) that
\[
\left\|\bu(t)\right\|_{L^{\infty}(\RN)}\le J^{-1}((1+\sigma)Ct^{-N/2}\left\| 
J(u_1)\right\|_{L^1_{\rm ul}(\RN)})<\infty\ \ \textrm{for small}\ t>0.
\]
Hence, $\bu(t)\in L^{\infty}_{\rm loc}((0,T),L^{\infty}(\RN))$ for small $T>0$.
Since $J^{-1}(u)$ is concave for $u\ge J(C_1)$, by (\ref{TH1E3}) and Proposition~\ref{S2P2}~(i) we have
\[
\left\|\bu(t)\right\|_{L^1_{\rm ul}(\RN)}
\lesssim\left\|S(t)J(u_1)+1\right\|_{L^1_{\rm ul}(\RN)}
\le \left\|S(t)J(u_1)\right\|_{L^1_{\rm ul}(\RN)}+\left\|1\right\|_{L^1_{\rm ul}(\RN)}
\lesssim\left\|J(u_1)\right\|_{L^1_{\rm ul}(\RN)}+1,
\]
and hence $\bu\in L^{\infty}((0,T),L^1_{\rm ul}(\RN))$.
We have proved (\ref{TH1E3+}).

By Proposition~\ref{S2P4}~(i) we have
\begin{multline}\label{TH1E5}
\bu(t)-S(t)u_0
\ge \bu(t)-S(t)u_1
\ge \bu(t)-J^{-1}\left(S(t)J(u_1)\right)\\
=J^{-1}\left((1+\sigma)S(t)J(u_1)\right)-J^{-1}\left(S(t)J(u_1)\right)
=(J^{-1})'\left((1+\rho\sigma)S(t)J(u_1)\right)\sigma S(t)J(u_1)
\end{multline}
for some $\rho=\rho(x,t)\in[0,1]$.
Here, we used the mean value theorem.
Since $J(u)$ is convex for $u\ge C_1$, $J^{-1}(u)$ is concave for $u\ge J(C_1)$.
We have
\begin{multline}\label{TH1E6}
(J^{-1})'\left((1+\rho\sigma)S(t)J(u_1)\right)\sigma S(t)J(u_1)
\ge (J^{-1})'\left((1+\sigma)S(t)J(u_1)\right)\sigma S(t)J(u_1)\\
=\frac{\sigma S(t)J(u_1)}{J'\left(J^{-1}\left((1+\sigma)S(t)J(u_1)\right)\right)}
=\frac{\sigma}{1+\sigma}\frac{J(\bu(t))}{J'(\bu(t))}.
\end{multline}
By (\ref{TH1E5}) and (\ref{TH1E6}) we have
\begin{equation}\label{TH1E7}
\bu(t)-S(t)u_0\ge\frac{\sigma}{1+\sigma}\frac{J(\bu(t))}{J'(\bu(t))}.
\end{equation}
On the other hand, let $s\in (0,t)$.
Since $0<\theta\le 1$, it follows from Proposition~\ref{S2P4}~(ii) that
\begin{multline}\label{TH1E8}
S(t-s)J(\bu(s))^{\theta}=S(t-s)\left[\left\{(1+\sigma)S(s)J(u_1)\right\}^{\theta}\right]\\
\le \left\{S(t-s)\left[(1+\sigma)S(s)J(u_1)\right]\right\}^{\theta}
\le\left\{(1+\sigma)S(t)J(u_1)\right\}^{\theta}=J(\bu(t))^{\theta}.
\end{multline}
Using (\ref{TH1E8}), we have
\begin{multline}\label{TH1E9}
\int_0^tS(t-s)f(\bu(s))ds
\le\int_0^tS(t-s)\left[\left\|\frac{f(\bu(s))}{J(\bu(s))^{\theta}}\right\|_{\infty}J(\bu(s))^{\theta}\right]ds\\
\le\int_0^t\left\|\frac{f(\bu(s))}{J(\bu(s))^{\theta}}\right\|_{\infty}S(t-s)J(\bu(s))^{\theta}ds
\le J(\bu(t))^{\theta}\int_0^t\left\|\frac{f(\bu(s))}{J(\bu(s))^{\theta}}\right\|_{\infty}ds.
\end{multline}

We prove the case (i).
We have
\begin{equation}\label{TH1E10+1}
J(\bu(t))^{\theta}\int_0^t\left\|\frac{f(\bu(s))}{J(\bu(s))^{\theta}}\right\|_{\infty}ds
\le \frac{J(\bu(t))}{J'(\bu(t))}\tJ(\left\|\bu(t)\right\|_{\infty})
\int_0^t\tf(\left\|\bu(s)\right\|_{\infty})ds.
\end{equation}
We define $\eta$ by
\begin{equation}\label{TH1E10+2}
\eta:=J^{-1}(C(1+\sigma)t^{-N/2}\left\|J(u_1)\right\|_{L^1_{\rm ul}(\RN)}).
\end{equation}
Since $\left\|\bu(t)\right\|_{\infty}\le\eta$, we have
\begin{equation}\label{TH1E10+3}
\tJ(\left\|\bu(t)\right\|_{\infty})
\int_0^t\tf(\left\|\bu(s)\right\|_{\infty})ds\le
\frac{2}{N}\{C(1+\sigma)\left\|J(u_1)\right\|_{L^1_{\rm ul}(\RN)}\}^{{2}/{N}}
\tJ(\eta)
\int_{\eta}^{\infty}\frac{\tf(\tau)J'(\tau)d\tau}{J(\tau)^{1+2/N}},
\end{equation}
where we used a change of variables $\tau:=J^{-1}(C(1+\sigma)s^{-N/2}\left\|J(u_1)\right\|_{L^1_{\rm ul}(\RN)})$.
Because of (\ref{TH1E1}), there exists a large $\eta>0$ such that
\begin{equation}\label{TH1E11}
\frac{2}{N}\{C(1+\sigma)\left\|J(u_1)\right\|_{L^1_{\rm ul}(\RN)}\}^{{2}/{N}}
\tJ(\eta)
\int_{\eta}^{\infty}\frac{\tf(\tau)J'(\tau)d\tau}{J(\tau)^{1+2/N}}
\le\frac{\sigma}{1+\sigma}.
\end{equation}
Therefore, if $\eta>0$ is large, then by (\ref{TH1E11}), (\ref{TH1E10+3}), (\ref{TH1E10+1}) and (\ref{TH1E9}) we have
\begin{equation}\label{TH1E12}
\int_0^tS(t-s)f(\bu(s))ds\le\frac{\sigma}{1+\sigma}\frac{J(\bu(t))}{J'(\bu(t))}.
\end{equation}
By (\ref{TH1E12}) and (\ref{TH1E7}) we have
\begin{equation}\label{TH1E12+}
\bu(t)-S(t)u_0\ge\int_0^tS(t-s)f(\bu(s))ds.
\end{equation}
Since $\eta$ and $t$ are related by (\ref{TH1E10+2}), we have shown that there is a small $t>0$ such that $\bu(t)$ is a supersolution which satisfies (\ref{TH1E3+}).
By Proposition~\ref{S2P1} we see that (\ref{S1E1}) has a nonnegative solution $u(t)$ and that $u(t)\le\bu(t)$ for $0<t<T$.
Since $\sigma>0$ is a constant, (\ref{TH1E2+}) follows from (\ref{TH1E3}).

We prove the case (ii).
In the case (ii) the inequality (\ref{TH1E9}) also holds.
Since $J(u_1)\in \calL^1_{\rm ul}(\RN)$, it follows from
Proposition~\ref{S2P2}~(ii) that $\left\|\bu(s)\right\|_{L^\infty(\RN)}\le J^{-1}((1+\sigma)C_*s^{-N/2}\left\|J(u_1)\right\|_{L^1_{\rm ul}(\RN)})$.
By the same calculation as (\ref{TH1E10+3}) we have (\ref{TH1E10+3}) with $C$ replaced by $C_*$.
Because of (\ref{TH1E2}) and Proposition~\ref{S2P2}~(ii) we can take $C_*>0$ such that if $0<t<t_0(C_*)$, then
\begin{equation}\label{TH1E13}
\frac{2}{N}\{C_*(1+\sigma)\left\|J(u_1)\right\|_{L^1_{\rm ul}(\RN)}\}^{2/N}
\tJ(\eta)
\int_{\eta}^{\infty}\frac{\tf(\tau)J'(\tau)}{J(\tau)^{1+2/N}}d\tau
\le\frac{\sigma}{1+\sigma}.
\end{equation}
By (\ref{TH1E13}), (\ref{TH1E10+3}) with $C_*$ and (\ref{TH1E9})
we have (\ref{TH1E12}).
By (\ref{TH1E12}) and (\ref{TH1E7}) we have (\ref{TH1E12+}).
Since $\bu(t)$ is a supersolution which satisfies (\ref{TH1E3+}), by Proposition~\ref{S2P1} we see that (\ref{S1E1}) has a nonnegative solution $u(t)$ and that $u(t)\le\bu(t)$ for $0<t<T$.
Since $\sigma>0$ is a constant, (\ref{TH1E2+}) follows from (\ref{TH1E3}).
\end{proof}

\begin{proof}[\bf Proof of Theorem~\ref{THA}]
In three cases (i) (ii) and (iii) we easily see that $f\in C[0,\infty)$, $f$ is nonnegative and $f$ is nondecreasing for $u>0$, since $f\in X_q$.
Let $J(u):=F(u)^{-r}$.
We show that $J$ satisfies (\ref{J}).
It is enough to show that $J''(u)\ge 0$ for large $u$, since other properties follow from the definition of $J(u)$.
In the cases (i) and (iii) we see that $q<1+r$, and hence
\[
J''(u)=\frac{r(r+1-f'(u)F(u))}{f(u)^2F(u)^{r+2}}\ge 0\ \ \textrm{for large}\ u.
\]
In the case (ii) we see that $r+1=q\ge f'(u)F(u)$ for large $u$, and hence $J''(u)\ge 0$ for large $u$.

Next, we show that
\begin{equation}\label{THAPE1}
\frac{d}{du}\left(\frac{f(u)}{J(u)^{\theta}}\right)\ge 0\ \ \textrm{for large $u$}\ \ \textrm{and}\ \ 
\frac{d}{du}\left(\frac{J'(u)}{J(u)^{1-\theta}}\right)\ge 0\ \ \textrm{for large $u$}.
\end{equation}
We consider the cases (i) and (iii).
Then, $1\le q<1+r$ and $r\ge N/2$, and there exists $\theta>0$ such that $\frac{q-1}{r}<\theta<\min\{1,\frac{q}{r}\}$.
Then,
\begin{equation}\label{THAPE2}
\frac{d}{du}\left(\frac{f(u)}{J(u)^{\theta}}\right)
=(f'(u)F(u)-r\theta)F(u)^{r\theta-1}>0\ \ \textrm{for large}\ u>0,
\end{equation}
since $f'(u)F(u)-r\theta\to q-r\theta>0$ as $u\to\infty$.
We have
\[
\frac{d}{du}\left(\frac{J'(u)}{J(u)^{1-\theta}}\right)
=(1-\theta)J(u)^{\theta-1}J''(u)\left(\frac{1}{1-\theta}-\frac{J'(u)^2}{J(u)J''(u)}\right).
\]
Then,
\[
\frac{J'(u)^2}{J(u)J''(u)}=\frac{r}{r+1-f'(u)F(u)}\to \frac{r}{r+1-q}\ \textrm{as}\ u\to\infty.
\]
We see that $\frac{1}{1-\theta}-\frac{r}{r+1-q}>0$, and hence $J'(u)/J(u)^{1-\theta}$ is nondecreasing.
Thus, (\ref{THAPE1}) holds in the cases (i) and (iii).
We consider the case (ii).
Then $q=1+r$ and $r>N/2$.
We take $\theta=1$.
Since $q-r=1$, (\ref{THAPE2}) holds.
Since $\theta=1$, by (\ref{S1P2E1}) we see that
\[
\frac{d}{du}\left(\frac{J'(u)}{J(u)^{1-\theta}}\right)=J''(u)=\frac{r(q-f'(u)F(u))}{f(u)^2F(u)^{r+2}}\ge 0\ \ \textrm{for large}\ u.
\]
Thus, (\ref{THAPE1}) holds in the case (ii).

We prove (i) and (ii).
Then, $r>N/2$.
We check (\ref{TH1E1}).
Because of (\ref{THAPE1}), we can take $\xi>0$ such that $f(u)/J(u)^{\theta}$ and $J'(u)/J(u)^{1-\theta}$ are nondecreasing for $u>\xi$.
If $\eta>\xi$ is large, then
\[
\tJ(\eta)\int_{\eta}^{\infty}\frac{\tf(\tau)J'(\tau)d\tau}{J(\tau)^{1+2/N}}
\le\int_{\eta}^{\infty}\frac{f(\tau)J'(\tau)^2d\tau}{J(\tau)^{2+2/N}}
=r^2\int_{\eta}^{\infty}\frac{F(\tau)^{2r/N-2}}{f(\tau)}d\tau
=\frac{r^2F(\eta)^{2r/N-1}}{2r/N-1}\to 0\ \textrm{as}\ \eta\to\infty.
\]
Since $J$ satisfies all the assumptions of Theorem~\ref{TH1}~(i), by Theorem~\ref{TH1}~(i) we see that (\ref{S1E1}) has a nonnegative solution and (\ref{THAE0}) holds.

We prove (iii).
Then, $r=N/2$.
We check (\ref{TH1E2}).
Since $q-1<r\theta$, we can choose $\e>0$ such that $q-1+\e<r\theta$.
By Lemma~\ref{S2L1} we see that
\begin{equation}\label{THAPE3}
\int_{\eta}^{\infty}F(\tau)^{r\theta}d\tau
<C\int_{\eta}^{\infty}\tau^{-r\theta/(q-1+\e)}d\tau<\infty.
\end{equation}
Integrating $f'(u)/f(u)\le (q+\e)/f(u)F(u)$, we have $f(u)\le CF(u)^{-q-\e}$ for large $u$, and hence
\begin{equation}\label{THAPE4}
f(u)F(u)^{1+r\theta}\le CF(u)^{-q-\e+1+r\theta}\to 0\ \ \textrm{as}\ \ u\to\infty.
\end{equation}
Because of (\ref{THAPE1}), we can take $\xi>0$ such that $f(u)/J(u)^{\theta}$ and $J'(u)/J(u)^{1-\theta}$ are nondecreasing for $u\ge\xi$.
If $\eta>\xi$ is large, then
\[
\tJ(\eta)\int_{\eta}^{\infty}\frac{\tf(\tau)J'(\tau)d\tau}{J(\tau)^{1+2/N}}
=\frac{J'(\eta)}{J(\eta)^{1-\theta}}
\int_{\eta}^{\infty}\frac{f(\tau)J'(\tau)d\tau}{J(\tau)^{1+\theta+2/N}}
=\frac{r^2}{f(\eta)F(\eta)^{1+r\theta}}
\int_{\eta}^{\infty}F(\tau)^{r\theta}d\tau.
\]
Because of (\ref{THAPE3}) and (\ref{THAPE4}), L'Hospital's rule is applicable in the following limit:
\[
\lim_{\eta\to\infty}\frac{\int_{\eta}^{\infty}F(\tau)^{r\theta}d\tau}{f(\eta)F(\eta)^{1+r\theta}}
=
\lim_{\eta\to\infty}\frac{\frac{d}{d\eta}
\left(\int_{\eta}^{\infty}F(\tau)^{r\theta}d\tau\right)}
{\frac{d}{d\eta}\left(f(\eta)F(\eta)^{1+r\theta}\right)}
=\lim_{\eta\to\infty}\frac{-1}{f'(\eta)F(\eta)-1-r\theta}
=\frac{1}{r\theta+1-q}>0.
\]
By Theorem~\ref{TH1}~(ii) we see that (\ref{S1E1}) has a nonnegative solution and (\ref{THAE0}) holds.
\end{proof}

\begin{proof}[\bf Proof of Theorem~\ref{THB}]
In two cases (i) and (ii) we easily see that $f\in C[0,\infty)$, $f$ is nonnegative and $f$ is nondecreasing for $u>0$, since $f\in X_q$.
Let $J(u):=J_{\alpha}(u)$ and $\theta=1$. 
In both cases (i) and (ii) we have
\[
\frac{d}{du}\left(\frac{f(u)}{J(u)^{\theta}}\right)=\frac{h(u)^{-1+2/N}}{[\log(h(u)+e)]^{\alpha}}
\left\{f'(u)F(u)-\frac{N}{2}\left(1+\frac{\alpha h(u)}{(h(u)+e)\log(h(u)+e)}
\right)\right\}\ge 0\ \ \textrm{for large}\ u,
\]
since $f'(u)F(u)\to1+N/2$ as $u\to\infty$.
In the case (i) we see that $\frac{d}{du}\left(\frac{J'(u)}{J(u)^{1-\theta}}\right)=J''(u)\ge 0$, since $J$ is convex.
In the case (ii) let $\hat{\rho}>\rho$. By direct calculation we have
\begin{equation}\label{eq01+}
\frac{d}{du}\left(g'(h(u))^{\hat{\rho}}h'(u)\right)=g'(h(u))^{\hat{\rho}-1}\cdot
\frac{N}{2f(u)^2F(u)^{N+2}}\cdot\frac{[\log(h(u)+e)]^{\alpha-1}}{h(u)+e}\cdot j(u),
\end{equation}
where
\begin{align}\label{eq02+}
\begin{split}
j(u)&:=\frac{N}{2}\alpha\hat{\rho}\left\{1+\frac{e}{h(u)+e}+\frac{(\alpha-1)h(u)}{(h(u)+e)\log(h(u)+e)}
\right\}+(q-f'(u)F(u))\left\{\left(1+\frac{e}{h(u)}\right)\log(h(u)+e)+\alpha\right\}\\
&\ge\frac{N}{2}\alpha\hat{\rho}\left\{1+\frac{e}{h(u)+e}+\frac{(\alpha-1)h(u)}{(h(u)+e)\log(h(u)+e)}
\right\}-\frac{N}{2}\alpha\rho
\left\{1+\frac{e}{h(u)}+\frac{\alpha}{\log(h(u)+e)}\right\}\\
&\to\frac{N}{2}\alpha(\hat{\rho}-\rho)>0 \ \ \textrm{as}\ u\to\infty.
\end{split}
\end{align}
Note that we use (\ref{TH3E1}).
Considering the case where $\hat{\rho}=1$, we see that $J''(u)\ge 0$ for large $u>0$.
In both cases (i) and (ii) we have checked (\ref{J}) and (\ref{THAPE1}).

We prove (i). 
Because of (\ref{THAPE1}), we can take a large $\xi>0$ such that $\tf(u)=f(u)/J(u)^{\theta}$ for $u\ge\xi$ and $\tJ(u)=J'(u)/J(u)^{1-\theta}$ for $u\ge\xi$.
Then,
\begin{multline*}
\frac{J'(\eta)}{J(\eta)^{1-\theta}}\int_{\eta}^{\infty}\frac{f(\tau)J'(\tau)d\tau}{J(\tau)^{1+\theta+2/N}}
\le\int_{\eta}^{\infty}\frac{f(\tau)J'(\tau)^2d\tau}{J(\tau)^{2+2/N}}
=\int_{\eta}^{\infty}\frac{f(\tau)g'(h(\tau))^2h'(\tau)^2d\tau}{g(h(\tau))^{2+2/N}}\\
\le\frac{N}{2}C
\int_{h(\eta)}^{\infty}\frac{2d\sigma}{(\sigma+e)[\log(\sigma+e)]^{2\alpha/N}}
=\frac{NC}{2\alpha/N-1}[\log(h(\eta)+e)]^{1-2\alpha/N}\to 0\ \ \textrm{as}\ \ \eta\to\infty,
\end{multline*}
and hence (\ref{TH1E1}) holds.
By Theorem~\ref{TH1}~(i) we see that (\ref{S1E1}) has a nonnegative solution and that (\ref{TH3E1+}) holds.

We prove (ii).
Because of (\ref{THAPE1}), we can take a large $\xi>0$ such that $\tf(u)=f(u)/J(u)^{\theta}$ for $u\ge\xi$ and $\tJ(u)=J'(u)/J(u)^{1-\theta}$ for $u\ge\xi$.
We choose $\tilde{\rho}\in(\rho,1)$.
It follows from \eqref{eq01+} and \eqref{eq02+} that
$g'(h(u))^{\tilde{\rho}}h'(u)$ is increasing for large $u>0$.
When $\eta>0$ is large, we have
\begin{multline*}
\frac{J'(\eta)}{J(\eta)^{1-\theta}}\int_{\eta}^{\infty}\frac{f(\tau)J'(\tau)d\tau}{J(\tau)^{1+\theta+2/N}}
=g'(h(\eta))^{1-\tilde{\rho}}g'(h(\eta))^{\tilde{\rho}}h'(\eta)
\int_{\eta}^{\infty}\frac{f(\tau)g'(h(\tau))h'(\tau)d\tau}{g(h(\tau))^{2+2/N}}\\
\le g'(h(\eta))^{1-\tilde{\rho}}
\int_{\eta}^{\infty}\frac{f(\tau)g'(h(\tau))^{1+\tilde{\rho}}h'(\tau)^2d\tau}{g(h(\tau))^{2+2/N}}
=\frac{N}{2}g'(h(\eta))^{1-\tilde{\rho}}\int_{h(\eta)}^{\infty}\frac{\sigma^{1+2/N}g'(\sigma)^{1+\tilde{\rho}}d\sigma}{g(\sigma)^{2+2/N}}\\
\le \frac{N}{2}C[\log(h(\eta)+e)]^{\frac{N(1-\tilde{\rho})}{2}}
\int_{h(\eta)}^{\infty}\frac{d\sigma}{(\sigma+e)[\log(\sigma+e)]^{1+\frac{N(1-\tilde{\rho})}{2}}}
=\frac{C}{1-\tilde{\rho}}.
\end{multline*}
By Theorem~\ref{TH1}~(ii) we see that (\ref{S1E1}) has a nonnegative solution and that (\ref{TH3E1+}) holds.
\end{proof}

\begin{corollary}\label{S3C1}
Let $u_0\ge 0$.
Suppose that $f\in C[0,\infty)$, $f$ is nonnegative and $f$ is nondecreasing for $u>0$.
Suppose that $J\in C^2[0,\infty)$ satisfies (\ref{J}) and
\begin{equation}\label{S3C1E-1}
\textrm{the limit}\ \gamma:=\lim_{u\to\infty}\frac{J(u)J''(u)}{J'(u)^2}\ \textrm{exists.}
\end{equation}
If there exists a small $\e>0$ such that
\begin{equation}\label{S3C1E1}
\limsup_{u\to\infty}\frac{f(u)J'(u)}{J(u)^{1+2/N-\e}}<\infty,
\end{equation}
then (\ref{S1E1}) admits a local in time nonnegative solution for $u_0$ satisfying $J(u_0)\in L^1_{\rm ul}(\RN)$.
\end{corollary}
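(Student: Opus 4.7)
The plan is to reduce the corollary to Theorem~\ref{TH1}~(i); the nontrivial task is to verify the integrability condition \eqref{TH1E1} for a suitable $\theta\in(0,1]$ and $\xi\ge0$. Before choosing $\theta$, I would note that \eqref{S3C1E1} is monotone in $\e$: shrinking $\e$ only makes the limsup smaller (in fact zero), so I may assume $0<\e<2/N$. From~(\ref{J}), $J''\ge0$ eventually, giving $\gamma\ge0$; and a standard argument using $(\log J')'=(\log J)'\cdot(JJ''/(J')^2)$ shows $\gamma\le1$ (otherwise integration forces $J^{-\delta}$ to vanish in finite time, contradicting $J\in C^2[0,\infty)$).

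I would then choose $\theta$ so that two functions are eventually nondecreasing: $J'(u)/J(u)^{1-\theta}$, which governs $\tJ$, and the auxiliary function $h(u):=J(u)^{1+2/N-\e-\theta}/J'(u)$, which will majorize $f/J^\theta$. A short computation of logarithmic derivatives shows that the first condition reduces asymptotically to $\theta>1-\gamma$ and the second to $\theta<1+2/N-\e-\gamma$; since $0\le\gamma\le1$ and $\e<2/N$, the interval $(1-\gamma,1+2/N-\e-\gamma)\cap(0,1]$ is nonempty whenever $\gamma>0$, and for $\gamma=0$ I would simply take $\theta=1$ and use that $J'$ is itself nondecreasing by~(\ref{J}).

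With these monotonicities in place on $[\xi,\infty)$, I would use \eqref{S3C1E1} to obtain the pointwise bound $f(s)/J(s)^\theta\le Ch(s)$; then the monotonicity of $h$ passes through the supremum to yield $\tf(\tau)\le Ch(\tau)$. Since $h(\tau)J'(\tau)/J(\tau)^{1+2/N}=J(\tau)^{-\e-\theta}$ and $\tJ(\eta)=J'(\eta)/J(\eta)^{1-\theta}$, the left-hand side of \eqref{TH1E1} is bounded above by
\[
C\,\frac{J'(\eta)}{J(\eta)^{1-\theta}}\int_\eta^\infty J(\tau)^{-\e-\theta}\,d\tau.
\]
Changing variables $v=J(\tau)$ and invoking the asymptotic $J'(u)=J(u)^{\gamma+o(1)}$ (obtained by integrating $(\log J')'=(\log J)'(\gamma+o(1))$ and exponentiating), this expression is dominated by $C_{\eta_1}J(\eta)^{2\eta_1-\e}$ for any small $\eta_1>0$, which tends to $0$ as $\eta\to\infty$ once $\eta_1<\e/2$.

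The main obstacle is that $f$ is only continuous and nondecreasing, so the quotient $f/J^\theta$ need not itself be monotone and the supremum in the definition of $\tf$ cannot simply be discarded as in the proofs of Theorems~\ref{THA} and~\ref{THB} (where $f\in X_q$ supplies the monotonicity). The device for circumventing this is to majorize $f/J^\theta$ by the monotone function $h$; the strict upper bound $\theta<1+2/N-\e-\gamma$ (rather than equality) is precisely what forces $h'>0$ eventually, so that the majorization survives the supremum and the final estimate goes through.
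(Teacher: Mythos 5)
Your proposal is correct and follows essentially the same route as the paper's proof: reduce to Theorem~\ref{TH1}~(i), pick $\theta\in(1-\gamma,\,1-\gamma+2/N-\e)\cap(0,1]$ (with $\theta=1$ when $\gamma=0$), majorize $f/J^\theta$ by the eventually monotone function $J^{1+2/N-\e-\theta}/J'$ so the supremum in $\tf$ can be controlled, and then verify (\ref{TH1E1}). The only (harmless) deviation is in the final estimate, where the paper simply uses the monotonicity of $J'/J^{1-\theta}$ to pull $\tJ(\eta)$ inside the integral and obtain the exact antiderivative $\e^{-1}J(\eta)^{-\e}$, whereas you invoke the asymptotic $J'=J^{\gamma+o(1)}$ and a change of variables; both yield the required decay.
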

\begin{proof}
We define $q_J$ by
\[
q_J:=\lim_{u\to\infty}\frac{J'(u)^2}{J(u)J''(u)},
\]
which is a conjugate exponent of the growth rate of $J$.
In \cite{FI18,M18} it was shown that $1\le q_J\le\infty$ if the limit $q_J$ exists.
Therefore, $0\le \gamma\le 1$.

Let $\hf(u):=J(u)^{1+2/N-\e}/J'(u)$.
First, we consider the case $0<\gamma\le 1$.
Since $0<\gamma\le 1$, we can take $\theta\in (0,1]$ such that $1-\gamma<\theta<1-\gamma+2/N-\e$.
Then, for large $u>0$,
\begin{align}
\frac{d}{du}\left(\frac{f(u)}{J(u)^{\theta}}\right)
&=J(u)^{1+2/N-\theta-\e}\left(1+\frac{2}{N}-\theta-\e-\frac{J(u)J''(u)}{J'(u)^2}\right)>0,\label{S3C1E1+-}\\
\frac{d}{du}\left(\frac{J'(u)}{J(u)^{1-\theta}}\right)
&=\frac{J'(u)^2}{J(u)^{2-\theta}}\left(\frac{J(u)J''(u)}{J'(u)^2}-1+\theta\right)>0.\nonumber
\end{align}
Let $\xi$ be large, and let $\tf$ and $\tJ$ be defined by (\ref{TH1E0}).
Then, there exists $\xi>0$ such that
\begin{equation}\label{S3C1E1+}
\tf(u)=\frac{\hf(u)}{J(u)^{\theta}}\ \textrm{for}\ u\ge\xi\ \ 
\textrm{and}\ \ \tJ(u)=\frac{J'(u)}{J(u)^{1-\theta}}\ \textrm{for}\ u\ge\xi.
\end{equation}
Second, we consider the case $\gamma=0$.
Let $\theta =1$.
Then we see that (\ref{S3C1E1+-}) holds for large $\tau>0$.
Since $\theta=1$,
\[
\frac{d}{du}\left(\frac{J'(u)}{J(u)^{1-\theta}}\right)=J''(u)\ge 0
\ \ \textrm{for large}\ u>0.
\]
Then there exists $\xi>0$ such that (\ref{S3C1E1+}) holds.

We prove the corollary in the case $0\le \gamma\le1$.
It follows from (\ref{S3C1E1}) that
\begin{equation}\label{S3C1E2}
f(u)\le C_0\hf(u)\ \ \textrm{for large}\ u>0.
\end{equation}
Since
\[
\tJ(\eta)\int_{\eta}^{\infty}\frac{\tf(\tau)J'(\tau)d\tau}{J(\tau)^{1+2/N}}
=\int_{\eta}^{\infty}\frac{J(\tau)^{1+2/N-\theta-\e}J'(\tau)^2d\tau}{J'(\tau)J(\tau)^{2+2/N-\theta}}
=\int_{\eta}^{\infty}\frac{J'(\tau)d\tau}{J(\tau)^{1+\e}}=\frac{1}{\e}J(\eta)^{-\e}\to 0
\ \ \textrm{as}\ \ \eta\to\infty,
\]
by (\ref{S3C1E2}) we see that (\ref{TH1E1}) holds for $f$.
Hence, it follows from Theorem~\ref{TH1}~(i) that (\ref{S1E1}) with $f$ admits a nonnegative solution if $J(u_0)\in L^1_{\rm ul}(\RN)$.
The proof is complete.
\end{proof}

\begin{corollary}\label{S3C2}
Let $u_0\ge 0$.
Suppose that $f\in C[0,\infty)$, $f$ is nonnegative and $f$ is nondecreasing for $u>0$.
Suppose that $J\in C^2[0,\infty)$ satisfies (\ref{J}) and
\begin{equation}\label{S3C2E-1}
\lim_{u\to\infty}\frac{J(u)J''(u)}{J'(u)^2}=0.
\end{equation}
If there exists $\gamma>N/2$ such that
\begin{equation}\label{S3C2E0}
\limsup_{u\to\infty}\frac{f(u)J'(u)[\log(J(u)+e)]^{2\gamma/N}}{J(u)^{1+2/N}}<\infty,
\end{equation}
then (\ref{S1E1}) admits a local in time nonnegative solution for $u_0$ satisfying $J(u_0)\in L^1_{\rm ul}(\RN)$.
\end{corollary}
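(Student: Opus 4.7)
The plan is to invoke Theorem~\ref{TH1}(i) with $\theta=1$, paralleling the $\gamma=0$ branch of the proof of Corollary~\ref{S3C1} but replacing the polynomial gain $J(u)^{-\e}$ there by the logarithmic gain $[\log(J(u)+e)]^{-2\gamma/N}$ dictated by~\eqref{S3C2E0}. Accordingly, set
\[
\hf(u):=\frac{J(u)^{1+2/N}}{J'(u)\bigl[\log(J(u)+e)\bigr]^{2\gamma/N}},
\]
so that~\eqref{S3C2E0} yields $f(u)\le C_{0}\hf(u)$ for all large $u$. The aim is to choose $\xi$ large enough that the suprema in~\eqref{TH1E0} satisfy $\tJ(u)=J'(u)$ (which is immediate from~\eqref{J}) and $\tf(u)\le C_{0}\hf(u)/J(u)$ for $u\ge\xi$, and then to verify~\eqref{TH1E1}.

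The critical monotonicity step is showing that $\hf/J$ is eventually nondecreasing. Writing $\hf(u)/J(u)=\Phi(J(u))/J'(u)$ with $\Phi(s):=s^{2/N}/[\log(s+e)]^{2\gamma/N}$, one computes
\[
\bigl(\hf/J\bigr)'(u)=\Phi'(J(u))-\frac{\Phi(J(u))\,J''(u)}{J'(u)^{2}},
\]
and a direct expansion yields $\Phi'(s)=(2/N)\,s^{2/N-1}[\log(s+e)]^{-2\gamma/N}(1+o(1))$ as $s\to\infty$. The hypothesis~\eqref{S3C2E-1} forces $J''(u)/J'(u)^{2}=o(1/J(u))$, so the subtracted term is $o(J(u)^{2/N-1}[\log J(u)]^{-2\gamma/N})$ and is strictly dominated by $\Phi'(J(u))$ for large $u$. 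Positivity of $(\hf/J)'$ then follows, and combining with $f\le C_{0}\hf$ gives the desired bound on $\tf$ after choosing $\xi$ suitably large.

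It remains to verify~\eqref{TH1E1}. Substituting the bounds on $\tf$ and $\tJ$ and the definition of $\hf$ collapses the integrand to at most $C_{0}/(J(\tau)[\log(J(\tau)+e)]^{2\gamma/N})$. Using $J'(\eta)\le J'(\tau)$ for $\tau\ge\eta$ (since $J'$ is eventually nondecreasing) and $1/J(\tau)\le 2/(J(\tau)+e)$ for large $\tau$, the substitution $\sigma=\log(J(\tau)+e)$ converts the left-hand side of~\eqref{TH1E1} into a bounded multiple of $\int_{\log(J(\eta)+e)}^{\infty}\sigma^{-2\gamma/N}\,d\sigma$, which equals $C\,[\log(J(\eta)+e)]^{1-2\gamma/N}$ and tends to zero as $\eta\to\infty$ because $\gamma>N/2$. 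Theorem~\ref{TH1}(i) then produces the desired solution. The main obstacle is the monotonicity argument for $\hf/J$: the hypothesis~\eqref{S3C2E-1} is exactly what keeps the $J''$-contribution subdominant to $\Phi'(J)$, while $\gamma>N/2$ is exactly what makes the final $\sigma$-integral converge, so the two assumptions of the corollary play complementary roles that must both be invoked.
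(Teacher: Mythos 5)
Your proposal is correct and follows essentially the same route as the paper: take $\theta=1$, compare $f$ with $\hf(u)=J(u)^{1+2/N}/\bigl(J'(u)[\log(J(u)+e)]^{2\gamma/N}\bigr)$, use \eqref{S3C2E-1} to show $\hf/J$ is eventually nondecreasing so that $\tf(u)\le C_0\hf(u)/J(u)$, and then reduce \eqref{TH1E1} to $\int^{\infty}\sigma^{-2\gamma/N}\,d\sigma<\infty$ via the substitution $\sigma=\log(J(\tau)+e)$. Your $\Phi$-and-little-$o$ presentation of the monotonicity step is just a repackaging of the paper's explicit derivative computation.
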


\begin{proof}
Let $\hf(u)=J(u)^{1+2/N}/J'(u)[\log(J(u)+e)]^{2\gamma/N}$ and let $\theta=1$.
By the same argument as in the proof of Corollary~\ref{S3C1} we see that $\tJ(u)=J'(u)/J(u)^{1-\theta}$ for $u\ge \xi$ if $\xi>0$ is large.
Since
\[
\frac{d}{d\tau}\left(\frac{J(u)^{2/N}}{J'(u)[\log(J(u)+e)]^{2\gamma/N}}\right)
=\frac{J(u)^{2/N-1}}{\log(J(u)+e)}\left(\frac{2}{N}-\frac{J(u)J''(u)}{J'(u)^2}-\frac{2\gamma}{N}\frac{J(u)}{(J(u)+e)\log(J(u)+e)}\right)>0
\]
for large $u>0$, we see that $\tf(u)=\hf(u)/J(u)^{\theta}$ for $u\ge\xi$ if $\xi>0$ is large.
Thus, there exists $\xi>0$ such that (\ref{S3C1E1+}) holds.

It follows from (\ref{S3C2E0}) that
\begin{equation}\label{S3C2E1}
f(u)\le C_0\hf(u)\ \ \textrm{for large}\ u>0.
\end{equation}
Since
\begin{multline*}
\tJ(\eta)\int_{\eta}^{\infty}\frac{\tf(\tau)J'(\tau)d\tau}{J(\tau)^{1+2/N}}
\le
\int_{\eta}^{\infty}\frac{J(\tau)^{1+2/N-\theta}J'(u)^2d\tau}{J(u)^{2+2/N-\theta}J'(u)[\log(J(u)+e)]^{2\gamma/N}}\\
\le\frac{C}{2\gamma/N-1}[\log(J(\eta)+e)]^{1-2\gamma/N}
\to 0\ \ \textrm{as}\ \ \eta\to\infty.
\end{multline*}
by (\ref{S3C2E1}) we see that (\ref{TH1E1}) holds for $f$.
Hence, it follows from Theorem~\ref{TH1}~(i) that (\ref{S1E1}) with $f$ admits a nonnegative solution if $J(u_0)\in L^1_{\rm ul}(\RN)$.
The proof is complete.
\end{proof}

\section{Nonexistence}
In this section let $N\ge1$ and $0\le\alpha<N/2$.
We begin to consider the case where $f(u)=f_{\beta}(u)$, $\beta>0$. 
We recall that
\[
F_\beta(u):=\int_u^{\infty}\frac{d\tau}{f_\beta(\tau)}\ \ \textrm{and}\ \
h_\beta(u):=F_\beta(u)^{-N/2}.
\]
Let $\e\in (0,N/2-\alpha)$.
There exists $C_0>0$ such that $f_\beta(u)$ is convex on $[C_0,\infty)$.
Then there exists $m\in (0,1/e)$ such that $|x|^{-N}(\log(1/|x|))^{-N/2-1+\e}\ge h_{\beta}(C_0)$ for $|x|\le m$.
We define
\begin{equation}\label{S4E1}
u_0 (x):=
\begin{cases}
h^{-1}_\beta\left(|x|^{-N}\left(\log\frac{1}{|x|}\right)^{-N/2-1+\e}\right) & \text{if $|x|\le m$,}\\
h^{-1}_\beta\left(m^{-N}\left(\log\frac{1}{m}\right)^{-N/2-1+\e}\right) & \text{if $|x|>m$.}
\end{cases}
\end{equation}
\begin{lemma}\label{S4L1}
Let $J_{\alpha}(u)$ and $g(u)$ be defined by (\ref{THBE0}).
Let $f(u)=f_{\beta}(u)$, $\beta>0$ and let $u_0(x)$ be defined by (\ref{S4E1}).
Then, the following hold:
\begin{enumerate}
\item $J_\alpha(u_0)=g(h_\beta(u_0))\in L^1_{\rm ul}(\RN)$.
\item $u_0\in\calL^1_{\rm ul}(\RN)$.
\end{enumerate}
\end{lemma}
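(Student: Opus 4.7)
The plan is to compute $J_\alpha(u_0)$ directly via the explicit form of $h_\beta(u_0)$ on $B(0,m)$, deduce integrability from a change-of-variables estimate, then bootstrap to $u_0\in L^1_{\rm ul}(\RN)$ using the fact that $J_\alpha(u)\ge u$ for large $u$, and finally approximate $u_0$ by truncated $BUC$ functions.

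For part (i), the point is that $u_0$ is cooked up so that for $|x|\le m$,
\[
h_\beta(u_0(x))=|x|^{-N}\bigl(\log(1/|x|)\bigr)^{-N/2-1+\e},
\]
hence $J_\alpha(u_0(x))=h_\beta(u_0(x))\bigl[\log(h_\beta(u_0(x))+e)\bigr]^\alpha$. As $|x|\to 0$ the bracket is $\sim N\log(1/|x|)$, so
\[
J_\alpha(u_0(x))\lesssim |x|^{-N}\bigl(\log(1/|x|)\bigr)^{\alpha-N/2-1+\e}
\]
on a small ball. Polar coordinates followed by $s=\log(1/r)$ reduce the integral near the origin to $\int^{\infty}s^{\alpha-N/2-1+\e}\,ds$, which is finite precisely because the hypothesis $\e\in(0,N/2-\alpha)$ makes the exponent strictly less than $-1$. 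Outside any neighborhood of the origin, $u_0$ and $J_\alpha(u_0)$ are bounded, so the $L^1_{\rm ul}$ norm is clearly finite.

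For part (ii), I first show $u_0\in L^1_{\rm ul}(\RN)$. Lemma~\ref{S2L2} provides the lower asymptotic $F_\beta(u)\ge\tfrac{N}{4}u^{-2/N}[\log(u+e)]^{-\beta}$; a matching upper bound $F_\beta(u)\le\tfrac{N}{2}u^{-2/N}[\log(u+e)]^{-\beta}$ follows by replacing $[\log(\tau+e)]^{-\beta}$ by its value at $\tau=u$ inside the integral defining $F_\beta$. Together these give $h_\beta(u)\asymp u[\log(u+e)]^{N\beta/2}$; since $\beta>0$, in particular $h_\beta(u)\ge u$ for large $u$, so $J_\alpha(u)\ge h_\beta(u)\ge u$ for large $u$. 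Combined with (i) this yields $u_0\in L^1_{\rm ul}(\RN)$ (equivalently, one reads off $u_0(x)\lesssim |x|^{-N}(\log(1/|x|))^{-N/2-1+\e}$ near $0$, which is integrable because $\e<N/2$).

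To finish, I approximate $u_0$ by truncations $u_k(x):=\min\{u_0(x),k\}$. The function $u_0$ is continuous on $\RN\setminus\{0\}$ (the inside and outside pieces match at $|x|=m$ by construction), so each $u_k$ is continuous on all of $\RN$, bounded, and constant outside a compact set; hence $u_k\in BUC(\RN)$. The nonnegative difference $u_0-u_k$ is supported in a ball $\{|x|<\rho_k\}$ with $\rho_k\to0$, and
\[
\sup_{y\in\RN}\int_{B(y,1)}(u_0-u_k)\,dx\le\int_{\{u_0>k\}}u_0\,dx\longrightarrow 0\quad(k\to\infty)
\]
by dominated convergence (using $u_0\in L^1(B(0,1))$). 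Thus $u_k\to u_0$ in $L^1_{\rm ul}(\RN)$, which by definition gives $u_0\in\calL^1_{\rm ul}(\RN)$. The only nontrivial step is the two-sided asymptotic for $h_\beta$: Lemma~\ref{S2L2} alone gives the wrong direction for an upper bound on $u_0$, so one has to complement it with the easy upper bound on $F_\beta$ just described; once that is done both (i) and (ii) are elementary.
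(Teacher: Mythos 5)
Your proof is correct and follows essentially the same route as the paper: part (i) is the identical computation (bound the logarithmic factor by a constant times $\log(1/|x|)$ and integrate in polar coordinates, using $\e<N/2-\alpha$), and part (ii) rests on the same key fact $h_\beta^{-1}(v)\lesssim v$ for large $v$. The only differences are cosmetic: you obtain $h_\beta^{-1}(v)\lesssim v$ from the elementary upper bound $F_\beta(u)\le\frac{N}{2}u^{-2/N}[\log(u+e)]^{-\beta}$ (valid since $\beta>0$), whereas the paper deduces it from concavity of $h_\beta^{-1}$ via $f_\beta'F_\beta\le 1+N/2$; and you approximate by truncations $\min\{u_0,k\}$ with absolute continuity of the integral, whereas the paper replaces $u_0$ by its constant value on the balls $B(0,2^{-n}m)$ and estimates the same integral $\int_0^{2^{-n}m}r^{-1}(\log(1/r))^{-N/2-1+\e}\,dr\to0$ directly — both are valid and of equal strength.
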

\begin{proof}
(i) Let $\rho\in (0,m]$ be fixed.
It suffices to prove $\int_{B(0,\rho)} J_\alpha(u_0(x)) dx<\infty$.
If $|x|\le m$, then
\begin{eqnarray}
\label{eq14+}
\log\left(|x|^{-N}\left(\log\frac{1}{|x|}\right)^{-N/2-1+\e}+e\right)
\le\log\left(|x|^{-N}+e\right)\le2N\log\frac{1}{|x|},
\end{eqnarray}
which yields
\[
J_\alpha(u_0(x))=g\left(|x|^{-N}\left(\log\frac{1}{|x|}\right)^{-N/2-1+\e}\right)
\le|x|^{-N}\left(\log\frac{1}{|x|}\right)^{-N/2-1+\e}\left(2N\log\frac{1}{|x|}\right)^{\alpha}.
\]
We deduce that
\[
\displaystyle\int_{B(0,\rho)}J_\alpha(u_0(x)) dx\lesssim
\displaystyle\int_0^\rho \frac{1}{r}\left(\log\frac{1}{r}\right)^{-N/2-1+\e+\alpha}dr
=\frac{1}{N/2-\e-\alpha}\cdot\left(\log\frac{1}{\rho}\right)^{-N/2+\e+\alpha}<\infty.
\]
Thus, $J_\alpha(u_0)\in L^1_{\rm ul}(\RN)$.\\
(ii) Let $\{\phi_n(x)\}_{n=1}^{\infty}$ be defined by
\[
\phi_n (x):=
\begin{cases}
u_0(2^{-n}m) & \text{if $|x|\le 2^{-n}m$,}\\
u_0(x) & \text{if $|x|>2^{-n}m$.}
\end{cases}
\]
We see that $\{\phi_n\}\subset {BUC}(\RN)$. 
Since $f'(u)F(u)\le q$ for large $u>0$, $h^{-1}_\beta(u)$ is concave for large $u>0$, which implies that $h^{-1}_\beta(u)\lesssim u$ for large $u>0$.
Therefore, if $n$ is large, then
\begin{multline*}
\displaystyle\int_{\RN} |u_0(x)-\phi_n (x)| dx
\le\displaystyle\int_{B(0,2^{-n}m)} h^{-1}_\beta\left(|x|^{-N}\left(\log\frac{1}{|x|}\right)^{-N/2-1+\e}\right)dx\\
\lesssim
\displaystyle\int_0^{2^{-n}m} \frac{1}{r}\left(\log\frac{1}{r}\right)^{-N/2-1+\e}dr
=\frac{1}{N/2-\e}\left(\log\frac{2^n}{m}\right)^{-N/2+\e}
\to0 \ \ \text{as $n\to\infty$.}
\end{multline*}
Thus, $u_0\in\mathcal{L}^1_{\rm ul}(\RN)$.
\end{proof}

\begin{theorem}\label{S4T1}
Let $N \ge 1$, $0\le\alpha<N/2$ and $\beta>0$.
Let $u_0$ be defined by (\ref{S4E1}).
Then (\ref{S1E1}) with $f(u)=f_{\beta}(u)$ admits no local in time nonnegative solution.
\end{theorem}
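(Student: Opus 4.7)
The plan is a proof by contradiction. Assume a nonnegative solution $u$ of \eqref{S1E1} with $f=f_\beta$ and $u_0$ from \eqref{S4E1} exists on some interval $(0,T)$. For a small parameter $T_0\in(0,T)$ to be chosen, introduce the localized weighted mass
\[
H(t):=(S(T_0-t)u(t))(0)=\int_{\RN}K(0,y,T_0-t)\,u(y,t)\,dy,\qquad 0<t<T_0.
\]
Duhamel's formula $u(t)=S(t)u_0+\int_0^t S(t-s)f_\beta(u(s))\,ds$ combined with the semigroup identity $S(T_0-t)S(t)=S(T_0)$ gives
\[
H(t)=(S(T_0)u_0)(0)+\int_0^t (S(T_0-s)f_\beta(u(s)))(0)\,ds,
\]
and an application of Jensen's inequality (Proposition~\ref{S2P4}) to the probability measure $K(0,\cdot,T_0-s)\,dy$, using $f_\beta''\ge 0$ on $[C_0,\infty)$, yields (modulo a bounded correction) the differential inequality of type \eqref{eq22+},
\[
H'(t)\gtrsim f_\beta(H(t))\quad\text{for }0<t<T_0.
\]

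Separation of variables produces the necessary condition $F_\beta((S(T_0)u_0)(0))\gtrsim T_0$ for $u$ to persist to time $T_0$. The remaining task is a quantitative refutation of this inequality for all sufficiently small $T_0$. By Lemma~\ref{S2L2} the initial datum obeys $u_0(y)\gtrsim |y|^{-N}(\log(1/|y|))^{-N/2-1+\e-N\beta/2}$ for small $|y|$; restricting the Gaussian integral defining $(S(T_0)u_0)(0)$ to the ball $|y|\le\sqrt{T_0}$ and applying the change of variables $s=\log(1/|y|)$ yields the explicit lower bound $(S(T_0)u_0)(0)\gtrsim T_0^{-N/2}(\log(1/T_0))^{-N/2+\e-N\beta/2}$. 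Combined with the asymptotic $F_\beta(v)\sim (N/2)v^{-2/N}(\log v)^{-\beta}$ as $v\to\infty$, this estimate needs to be sharpened to $F_\beta((S(T_0)u_0)(0))<T_0$ to close the contradiction.

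The main obstacle is precisely this sharpening. A direct one-shot Jensen/ODE argument gives only $F_\beta((S(T_0)u_0)(0))\lesssim T_0(\log(1/T_0))^{1-2\e/N}$, which (since $\e<N/2-\alpha\le N/2$ forces the logarithmic exponent to be positive) is much larger than $T_0$---consistent with, not contradictory to, the necessary condition. The choice \eqref{S4E1} is engineered to sit exactly at the doubly critical borderline, so that $J_\alpha(u_0)\in L^1_{\rm ul}(\RN)$ holds for the given $\alpha\in[0,N/2)$ while the sharp integrability $J_{N/2}(u_0)\in\mathcal{L}^1_{\rm ul}(\RN)$ that would produce existence via Theorem~\ref{THB} fails. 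To recover the missing logarithmic factor, one bootstraps the Duhamel lower bound, iterating $u(t)\ge S(t)u_0+\int_0^t S(t-s)f_\beta(S(s)u_0)\,ds$ and exploiting the fact that $S(s)u_0$ retains the singular profile of $u_0$ on the annulus $\sqrt s\lesssim |y|\lesssim\sqrt{T_0}$, so that the nonlinear integral contributes an additional $\log(1/T_0)$ factor to $H$; alternatively, one replaces $H$ by a weighted mass incorporating a $\log$-correction matching the $[\log u]^\beta$ growth of $f_\beta$. Either refinement, combined with a comparison principle, closes the argument and is what the authors describe as requiring ``a more detailed analysis than previous studies.''
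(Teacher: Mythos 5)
Your setup is sound as far as it goes, and you have correctly diagnosed the central difficulty: the one-shot Baras--Pierre computation gives $F_\beta\bigl((S(T_0)u_0)(0)\bigr)\approx T_0\,(\log(1/T_0))^{1-2\e/N}\gg T_0$, so the naive necessary condition is \emph{satisfied} by this $u_0$ and yields no contradiction. But the proposal stops exactly where the proof has to begin. The two repairs you gesture at (one extra Duhamel iteration, or an unspecified ``log-corrected weighted mass'') are not carried out, and the first cannot work in the form stated: to force $F_\beta(H)<T_0$ you would need to improve the lower bound on $H$ itself by a factor $(\log(1/T_0))^{N/2-\e}$, which for the small $\e$ required when $\alpha$ is close to $N/2$ is nearly a full power $(\log(1/T_0))^{N/2}$; a single insertion of $f_\beta(S(s)u_0)$ into Duhamel does not produce that.

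The paper's mechanism is different in two essential respects. First, the differential inequality is run over the two-scale window $t\in[\rho^2,\rho]$, and the logarithmic weight is injected not by iterating Duhamel but by feeding the lower bound $w(s)\gtrsim s^{-N/2}(\log\tfrac{1}{\rho})^{-N(\beta+1)/2-1+\e}$ (obtained from the initial data alone) back into the nonlinearity, which gives $\log(w(s)+e)\ge\tfrac{N}{2}(\log\tfrac{1}{s}+C_2(\rho))$ and hence the weighted ODE \eqref{eq22+}. Integrating that ODE over the whole window $[\rho^2,\rho]$, where $\log(1/t)$ doubles, produces $(2\log\tfrac{1}{\rho}+C_2)^{\beta+1}-(\log\tfrac{1}{\rho}+C_2)^{\beta+1}\approx(2^{\beta+1}-1)(\log\tfrac{1}{\rho})^{\beta+1}$. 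Second, the resulting necessary condition is an upper bound on the \emph{local $L^1$ mass} $M_\tau=\int_{B(0,\rho)}u(\cdot,\tau)\,dy$, namely $M_\tau\lesssim(\log\tfrac{1}{\rho})^{-N(\beta+1)/2}$, which is contradicted by the lower bound $\int_{B(0,\rho)}u_0\,dy\gtrsim(\log\tfrac{1}{\rho})^{-N(\beta+1)/2+\e}$: the contradiction lives entirely in the $\e$ excess in the exponent of $\log(1/\rho)$, not in a comparison of $F_\beta$ of a pointwise value with $T_0$. Without this two-scale, $L^1$-level bookkeeping your argument does not close, so the key step of the proof is missing.
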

We postpone the proof of Theorem~\ref{S4T1}.

\begin{proof}[\bf Proof of Theorem \ref{THC}]
We construct an initial function $u_0$. Choose $\e$ such that 
$0<\e<N/2-\max\{\alpha,\delta'\}$, where $0<\delta'<N/2$ is chosen later. 
We also choose $C_0\ge C_1$ such that $f_\beta(u)$ is convex on $[C_0,\infty)$.
Then we define $v_0(x)$ by the right hand side of \eqref{S4E1}.
Put $\mathcal{J}:=F^{-1}\circ F_\beta$ and $u_0(x):=\mathcal{J}(v_0(x))$.
We see that $u_0(x)=h^{-1}\left(|x|^{-N}\left(\log\frac{1}{|x|}\right)^{-N/2-1+\e}\right)$
for $|x|\le{m}$.
Then we can obtain $J_{\alpha}(u_0)\in L^1_{\rm ul}(\RN)$ in the same way as Lemma~\ref{S4L1}~(i).

We show that $u_0\in L^1_{\rm ul}(\RN)$.
It suffices to prove $\int_{B(0,\rho)} u_0(x) dx<\infty$ for small $\rho>0$.
By the assumption (ii) we can apply Lemma~\ref{S2L3}.
Using Lemma~\ref{S2L3} and \eqref{eq14+} we see that there exists $\delta'\in(0,N/2)$ independent of $\e$ such that
\[
h^{-1}\left(|x|^{-N}\left(\log\frac{1}{|x|}\right)^{-N/2-1+\e}\right)
\lesssim |x|^{-N}\left(\log\frac{1}{|x|}\right)^{-N/2-1+\e}\left(2N\log\frac{1}{|x|}\right)^{\delta'}
\]
for small $|x|>0$. Then we deduce that
\[
\displaystyle\int_{B(0,\rho)} u_0(x) dx\lesssim
\displaystyle\int_0^\rho \frac{1}{r}\left(\log\frac{1}{r}\right)^{-N/2-1+\e+\delta'}dr
=\frac{1}{N/2-\e-\delta'}\cdot\left(\log\frac{1}{\rho}\right)^{-N/2+\e+\delta'}
\]
for small $\rho>0$. Thus, $u_0\in L^1_{\rm ul}(\RN)$.

The proof is by contradiction. Suppose that there exists $T>0$ such that \eqref{S1E1} has a nonnegative solution. Since $u(t)\ge 
S(t)u_0\ge\calJ(C_0)$ in $\RN\times(0,T)$, we can define $v(x,t):=\calJ^{-1}(u(x,t))$.
Since $\calJ''(v)\ge 0$ for $v\ge C_0$ by the assumption (i), we have $v(t)\ge C_0$ and
\[
\partial_t v=\frac{\calJ''(v)}{\calJ'(v)}|\nabla v|^2+\Delta v+\frac{f(\calJ(v))}{\calJ'(v)}\ge\Delta v+f_\beta(v) \ \ \text{in $\RN\times(0,T)$.}
\]
Here we use $f(\calJ(v))=f_{\beta}(v)\calJ'(v)$.
Then it follows (see \cite[p.77]{QS07} for details) that
\begin{equation}\label{THCPE01}
v(t)\ge S(t-\tau)v(\tau)+\int_{\tau}^t S(t-s)f_\beta(v(s))ds
\ \ \text{in $\RN\times(\tau,T)$.}
\end{equation}
We also obtain
\[
\mathcal{J}(v(\tau))=u(\tau)\ge S(\tau)u_0=S(\tau)\mathcal{J}(v_0)
\ge\mathcal{J}(S(\tau)v_0) \ \ \text{in $\RN\times(0,T)$.}
\]
Since $\mathcal{J}$ is increasing on $[C_0,\infty)$, we see that $v(\tau)\ge S(\tau)v_0$ in $\RN\times(0,T)$.
Letting $\tau\to 0$ in (\ref{THCPE01}), we have
\[
v(t)\ge S(t)v_0+\int_0^t S(t-s)f_\beta(v(s))ds
\ \ \text{in $\RN\times(0,T)$,}
\]
and hence Proposition~\ref{S2P1} says that (\ref{S1E1}) has a nonnegative solution.
It contradicts the nonexistence result in Theorem~\ref{S4T1}.
We complete the proof.
\end{proof}

\begin{proof}[\bf Proof of Theorem \ref{S4T1}]
The proof is by contradiction.
Suppose that there exists $T>0$ such that (\ref{S1E1}) with $f(u)=f_{\beta}(u)$ possesses a local in time nonnegative solution on $(0,T)$.
Let $0<\xi<t<T$. It follows from the Fubini theorem and \eqref{S1D1E1} that
\begin{equation}\label{eq15}
u(t)=S(t-\xi)u(\xi)+\displaystyle\int_\xi^t S(t-s)f_\beta(u(s))ds.
\end{equation}
By \eqref{S1D1E1} and $u_0\ge C_0$ we have $u(t)\ge S(t)u_0\ge C_0$ in $\RN\times(0,T)$.
Let $G(x.t):=K(x,0,t)$.
Then, by \eqref{eq15} we can obtain in a similar way to \cite[Eq. (3.22)]{HI18} that
if $\rho>0$ is sufficiently small, then
\begin{eqnarray}
\label{eq16}
w(t)\ge c_* M_\tau 3^{-N/2} G(0,1) t^{-N/2}
+2^{-N/2} t^{-N/2}\displaystyle\int_{\rho^2}^t s^{N/2}f_\beta(w(s)) ds
\end{eqnarray}
holds for almost all $0<\tau<\rho^2$ and $\rho^2<t<(T-4\rho^2)/3$, where $c_*>0$ is a constant depending only on $N$,
\[
w(t):=\int_\RN u(x,t+4\rho^2)G(x,t) dx
\quad\text{and}\quad M_\tau:=\displaystyle\int_{B(0,\rho)} u(y,\tau) dy.
\]

We show that if $0<\rho<1$ is sufficiently small, then there exist $C_1>0$ and $\delta>0$
such that
\begin{eqnarray}
\label{eq17}
w(s)\ge C_1 s^{-N/2}\left(\log\frac{2}{3\delta^2\rho^2}\right)^{-N(\beta+1)/2-1+\e}
\ \ \text{for $\rho^2<s<\rho$.}
\end{eqnarray}
Let $0<\rho<1$ and $\rho^2<s<\rho$.
We see that
\begin{align}\label{eq18}
\begin{split}
w(s)&\ge\displaystyle\int_{\RN} S{(s+4\rho^2)}[u_0](x) G(x,s) dx\\
&=\displaystyle\int_{\RN}\displaystyle\int_{\RN} G(x-y,s+4\rho^2)u_0(y)dy\ G(x,s)dx\\
&=\displaystyle\int_{\RN}\displaystyle\int_{\RN} G(x-y,s+4\rho^2)G(x,s)dx\ u_0(y)dy\\
&=\displaystyle\int_{\RN}\displaystyle\int_{\RN} G(y-x,s+4\rho^2)G(x,s)dx\ u_0(y)dy\\
&=\displaystyle\int_{\RN} G(y,2s+4\rho^2)u_0(y)dy.
\end{split}
\end{align}
Here we use $G(y,t)=\int_{\RN} G(y-x,t-s)G(x,s)dx$ for $y\in\RN$ and $0<s<t$.
By Lemma~\ref{S2L2} there exists a small $m'>0$ such that for $|y|\le m'$,
\begin{equation}\label{eq18+}
h^{-1}_\beta\left(|y|^{-N}\left(\log\frac{1}{|y|}\right)^{-N/2-1+\e}\right)
\ge\tilde{h}_\beta\left(|y|^{-N}\left(\log\frac{1}{|y|}\right)^{-N/2-1+\e}\right)
\gtrsim|y|^{-N}\left(\log\frac{1}{|y|}\right)^{-N(\beta+1)/2-1+\e}.
\end{equation}
Put $s_*:=2s+4\rho^2$ and choose $\delta>0$ such that
$\sqrt{6}\delta\le\min\{m,m'\}$. Then we obtain
\begin{align}
\label{eq19}
\begin{split}
&\displaystyle\int_{\RN} G(y,2s+4\rho^2)u_0(y)dy\\
&\hspace{30pt}\gtrsim(4\pi s_*)^{-N/2}\displaystyle\int_{\{|y|\le\sqrt{s_*}\delta\}} 
e^{-\frac{|y|^2}{4s_*}}\cdot|y|^{-N}\left(\log\frac{1}{|y|}\right)^{-N(\beta+1)/2-1+\e}dy\\
&\hspace{30pt}\ge(4\pi)^{-N/2}\displaystyle\int_{\{|z|\le\delta\}} 
e^{-\frac{|z|^2}{4}}\cdot{s_*}^{-N/2}|z|^{-N} \left(\log\frac{1}{\sqrt{s_*}|z|}\right)^{-N(\beta+1)/2-1+\e}dz\\
&\hspace{30pt}\ge(4\pi)^{-N/2}\displaystyle\int_{\{\delta/2\le|z|\le\delta\}} 
e^{-\frac{|z|^2}{4}}\cdot{s_*}^{-N/2}|z|^{-N} \left(\log\frac{2}{\sqrt{s_*}\delta}\right)^{-N(\beta+1)/2-1+\e}dz\\
&\hspace{30pt}\gtrsim {s_*}^{-N/2}\left(\log\frac{4}{s_*\delta^2}\right)^{-N(\beta+1)/2-1+\e}.
\end{split}
\end{align}
Here we put $y=\sqrt{s_*}z$. By \eqref{eq18} and \eqref{eq19} we have
\begin{eqnarray}
\label{eq20}
s^{N/2}w(s)\gtrsim\left(\frac{s}{s_*}\right)^{N/2}\left(\log\frac{4}{s_*\delta^2}\right)^{-N(\beta+1)/2-1+\e}.
\end{eqnarray}
Since the right hand side of \eqref{eq20} is nondecreasing with respect to $s$, we have
\[
s^{N/2}w(s)\gtrsim\left(\frac{1}{6}\right)^{N/2}\left(\log\frac{2}{3\delta^2\rho^2}\right)^{-N(\beta+1)/2-1+\e}.
\]
Thus there exists $C_1>0$ such that \eqref{eq17} holds. We observe from $s<\rho$ that
\[
s^{-N/2}\left(\log\frac{2}{3\delta^2\rho^2}\right)^{-N(\beta+1)/2-1+\e}>
\rho^{-N/2}\left(\log\frac{2}{3\delta^2\rho^2}\right)^{-N(\beta+1)/2-1+\e}
\to\infty \ \ \text{as $\rho\to0$.}
\]
Hence, we choose $\rho$ sufficiently small and the right hand side of \eqref{eq17} is greater than $1$. Then we have
\begin{equation}\label{eq21}
\log(w(s)+e)\ge\log\left\{C_1s^{-N/2}\left(\log\frac{2}{3\delta^2\rho^2}\right)^{-N(\beta+1)/2-1+\e}
\right\}=\frac{N}{2}\left(\log\frac{1}{s}+C_2(\rho)\right)
\end{equation}
for $\rho^2<s<\rho$, where $C_2(\rho):=\frac{2}{N}\log C_1+\frac{2}{N}(-N(\beta+1)/2-1+\e)\log(\log\frac{2}{3\delta^2\rho^2})$.
By \eqref{eq16} and \eqref{eq21} we have
\begin{align}\label{eq22}
\begin{split}
t^{N/2}w(t)&\ge c_* M_\tau 3^{-N/2} G(0,1)+2^{-N/2}\left(\frac{N}{2}\right)^{\beta}
\displaystyle\int_{\rho^2}^t s^{N/2}\left(\log\frac{1}{s}+C_2(\rho)\right)^{\beta} w(s)^{1+2/N} ds\\
&=:H(t)
\end{split}
\end{align}
for almost all $0<\tau<\rho^2$ and $\rho^2<t<\rho$. Note that 
$\rho<(T-4\rho^2)/3$ holds since $\rho$ is sufficiently small.
Put $C_3:=2^{-N/2}(N/2)^{\beta}$. By \eqref{eq22} we have
\begin{multline}\label{eq22+}
\frac{dH}{dt}(t)
=C_3 t^{N/2}\left(\log\frac{1}{t}+C_2(\rho)\right)^{\beta} w(t)^{1+2/N}\\
\ge C_3 t^{N/2}\left(\log\frac{1}{t}+C_2(\rho)\right)^{\beta} (t^{-N/2}H(t))^{1+2/N}
=\frac{C_3}{t}\left(\log\frac{1}{t}+C_2(\rho)\right)^{\beta} H(t)^{1+2/N},
\end{multline}
which yields
\[
-\frac{N}{2}\cdot\frac{d}{dt}\left(H(t)^{-2/N}\right)
\ge-\frac{C_3}{\beta+1}\cdot\frac{d}{dt}\left\{\left(\log\frac{1}{t}+C_2(\rho)\right)^{\beta+1}\right\}.
\]
This implies that
\[
\frac{N}{2}H(\rho^2)^{-2/N}-\frac{N}{2}H(\rho)^{-2/N}
\ge\frac{C_3}{\beta+1}\left\{\left(2\log\frac{1}{\rho}+C_2(\rho)\right)^{\beta+1}-\left(\log\frac{1}{\rho}+C_2(\rho)\right)^{\beta+1}\right\}.
\]
Since $H(\rho^2)=c_* M_\tau 3^{-N/2} G(0,1)$ and $H(\rho)>0$, we have
\begin{eqnarray}
\label{eq23}
M_\tau\lesssim\left\{\left(2\log\frac{1}{\rho}+C_2(\rho)\right)^{\beta+1}-\left(\log\frac{1}{\rho}+C_2(\rho)\right)^{\beta+1}\right\}^{-N/2}
\end{eqnarray}
for almost all $0<\tau<\rho^2$.
We see that 
\begin{eqnarray}
\label{eq34}
M_\tau=\displaystyle\int_{B(0,\rho)} u(y,\tau) dy\ge\displaystyle\int_{B(0,\rho)} S(\tau)[u_0](y) dy.
\end{eqnarray}
By Lemma~\ref{S4L1}~(ii) and
Proposition~\ref{S2P3} we see that $\|S(\tau)u_0-u_0\|_{L^1_{\rm ul}(\RN)}\to0$ as $\tau\to0$.
Then there exists a subsequence
$\{(S(\tilde{\tau})u_0)|_{B(0,\rho)}\}_{\tilde{\tau}}$ such that
$(S(\tilde{\tau})u_0)|_{B(0,\rho)}\to u_0|_{B(0,\rho)}$ as $\tilde{\tau}\to0$
a.e. in $B(0,\rho)$.
Thus it follows from Fatou's lemma, \eqref{eq23} and \eqref{eq34} that
\begin{equation}\label{eq35}
\left\{\left(2\log\frac{1}{\rho}+C_2(\rho)\right)^{\beta+1}-\left(\log\frac{1}{\rho}+C_2(\rho)\right)^{\beta+1}\right\}^{-N/2}
\gtrsim\liminf_{\tilde{\tau}\to 0}\int_{B(0,\rho)}S(\tilde{\tau})[u_0](y)dy
\ge\int_{B(0,\rho)} u_0(y) dy.
\end{equation}
On the other hand, by \eqref{eq18+} we have
\begin{multline}\label{eq25}
\int_{B(0,\rho)} u_0(y)dy\ge
\int_{B(0,\rho)} |y|^{-N}\left(\log\frac{1}{|y|}\right)^{-N(\beta+1)/2-1+\e}dy\\
=\int_0^{\rho} \frac{1}{r}\left(\log\frac{1}{r}\right)^{-N(\beta+1)/2-1+\e}dr
=\frac{1}{N(\beta+1)/2-\e}\cdot\left(\log\frac{1}{\rho}\right)^{-N(\beta+1)/2+\e}
\end{multline}
for $0<\rho\le2\min\{m,m'\}$. By \eqref{eq35} and \eqref{eq25} we have
\begin{eqnarray}
\label{eq26}
\left\{\left(\frac{2\log\frac{1}{\rho}+C_2(\rho)}{\log\frac{1}{\rho}}
\right)^{\beta+1}-\left(\frac{
\log\frac{1}{\rho}+C_2(\rho)}{\log\frac{1}{\rho}}
\right)^{\beta+1}\right\}^{-N/2}\gtrsim\left(\log\frac{1}{\rho}\right)^{\e}\to\infty
\ \ \text{as $\rho\to0$.}
\end{eqnarray}
Here we use $C_2(\rho)/\log(1/\rho)\to0$ as $\rho\to0$.
Then the left hand side of \eqref{eq26} converges to $(2^{\beta+1}-1)^{-N/2}$ as $\rho\to0$.
This is a contradiction.
The proof is complete.
\end{proof}

Using Proposition~\ref{S1P2}~(ii), we can obtain a nonexistence result corresponding to Theorem~\ref{TH1}.

\begin{corollary}\label{S4C1}
Suppose that $f\in C[0,\infty)$, $f$ is nonnegative and $f$ is nondecreasing for $u>0$.
Suppose that $J\in C^3[0,\infty)$ satisfies (\ref{J}) and
\begin{equation}\label{S4C1E0-}
\textrm{the limit }
\delta:=\lim_{u\to\infty}\frac{J(u)J'''(u)}{J'(u)J''(u)}\ \textrm{exists}.
\end{equation}
If there exists $\e>0$ such that
\begin{equation}\label{S4C1E0}
\liminf_{u\to\infty}\frac{f(u)J'(u)}{J(u)^{1+2/N+\e}}>0,
\end{equation}
then there exists $u_0\ge 0$ such that $J(u_0)\in L^1_{\rm ul}(\RN)$ and (\ref{S1E1}) admits no nonnegative solution.
\end{corollary}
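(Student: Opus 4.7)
The plan is to reduce the corollary to Proposition~\ref{S1P2}~(ii). One constructs an auxiliary nonlinearity $\hat f$ on $[0,\infty)$ with $\hat f\le f$ pointwise that fits the hypotheses of Proposition~\ref{S1P2}~(ii) with an integrability exponent $r<N/2$ for which the condition $\hat F(u_0)^{-r}\in L^1_{\rm ul}(\RN)$ is equivalent to $J(u_0)\in L^1_{\rm ul}(\RN)$. Proposition~\ref{S1P2}~(ii) will then yield a $u_0$ with $J(u_0)\in L^1_{\rm ul}(\RN)$ for which the $\hat f$-problem admits no nonnegative solution; a nonnegative solution $u$ of (\ref{S1E1}) for the original $f$ would, by $f\ge\hat f$, be a supersolution for the $\hat f$-problem and hence produce a solution via Proposition~\ref{S2P1}, a contradiction.

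\textbf{Construction of $\hat f$ and the exponent $r$.} Choose a small $c>0$ and set $\hat f(u):=cJ(u)^{1+2/N+\e}/J'(u)$ for large $u$; hypothesis (\ref{S4C1E0}) makes $\hat f\le f$ asymptotically, and one extends $\hat f$ smoothly to $[0,\infty)$ as a nondecreasing $C^2$ convex function with $\hat f\le f$ throughout. Then
\[
\hat F(u)=\int_u^\infty\frac{J'(\tau)\,d\tau}{cJ(\tau)^{1+2/N+\e}}=\frac{1}{c(2/N+\e)}\,J(u)^{-(2/N+\e)}\qquad\text{for large }u.
\]
Taking $r:=N/(2+N\e)\in(0,N/2)$ gives $\hat F(u)^{-r}\asymp J(u)$, so the integrability condition in Proposition~\ref{S1P2}~(ii) matches $J(u_0)\in L^1_{\rm ul}(\RN)$.

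\textbf{Verification $\hat f\in X_{\hat q}$ with $\hat q\le1+r<1+N/2$.} Let $\gamma(u):=J(u)J''(u)/J'(u)^2$. A direct computation gives $\hat f'(u)\hat F(u)=(1+2/N+\e-\gamma(u))/(2/N+\e)$, so existence of $\hat q$ reduces to convergence of $\gamma(u)$. Logarithmic differentiation yields the linear ODE $\gamma'(u)=(J''(u)/J'(u))[1+\delta(u)-2\gamma(u)]$ with $\delta(u):=J(u)J'''(u)/(J'(u)J''(u))$, and multiplication by the integrating factor $(J'(u))^2$ gives
\[
J'(u)^2\gamma(u)=J'(u_0)^2\gamma(u_0)+\int_{u_0}^u(1+\delta(s))\,J'(s)J''(s)\,ds.
\]
Hypothesis (\ref{S4C1E0-}) (convergence of $\delta(u)$) then yields, via L'Hospital's rule when $J'\to\infty$ or direct integrability when $J'$ is bounded, a limit $\gamma(u)\to\gamma^\ast\in[0,1]$ (namely $\gamma^\ast=(1+\delta)/2$ in the former case). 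Consequently $\hat q=(1+2/N+\e-\gamma^\ast)/(2/N+\e)$ exists; since $\gamma^\ast\ge0$ one has $\hat q\le1+1/(2/N+\e)=1+r$, and since $\e>0$ also $\hat q<1+N/2$. Convexity $\hat f''\ge0$ near infinity follows from differentiating $\hat f'(u)=cJ(u)^{2/N+\e}(p-\gamma(u))$ with $p:=1+2/N+\e>\gamma^\ast$, and the smooth extension handles small $u$. All hypotheses of Proposition~\ref{S1P2}~(ii) being verified, the Strategy paragraph concludes the proof.

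\textbf{Main obstacle.} The technical core is showing genuine convergence (not merely $\limsup$ or $\liminf$) of $\gamma(u)$ from the convergence of $\delta(u)$. The integrating-factor argument is clean when $J'(u)\to\infty$, but the case where $J'$ remains bounded---forcing $J''\to0$---requires a separate direct estimate on $\int J'J''\,ds$ using the $\delta$-hypothesis. Once the limit $\gamma^\ast\in[0,1]$ is pinned down, the remaining verifications of $\hat q\in[1,1+r]$, convexity of $\hat f$, the identification $\hat F^{-r}\asymp J$, and the final comparison argument are routine.
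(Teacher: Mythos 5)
Your proposal is correct and follows essentially the same route as the paper: both take $\hf(u)\propto J(u)^{1+2/N+\e}/J'(u)$ (so that $\hF(u)^{-r}\asymp J(u)$ with $r=N/(2+N\e)<N/2$), verify $\hf\in C^2\cap X_{\hat q}$ with $\hat q=1+r(1-\gamma^\ast)\le 1+r<1+N/2$ by using (\ref{S4C1E0-}) to identify $\gamma^\ast=\lim_{u\to\infty}J(u)J''(u)/J'(u)^2=(1+\delta)/2\in[0,1]$, invoke Proposition~\ref{S1P2}~(ii) for the $\hf$-problem, and conclude by the supersolution comparison through Proposition~\ref{S2P1}. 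The one wrinkle is your claim that $\hf$ can be extended to all of $[0,\infty)$ with $\hf\le f$ throughout: this is impossible if $f$ vanishes near $0$ (since $\hf$ must satisfy (\ref{f}), in particular $\hf(u)>0$ for $u>0$), and the paper instead modifies $\hf$ on $[0,C_1]$ without keeping $\hf\le f$ there, taking $u_0\ge C_1$ so that the solution stays in the region where $f\ge\hf$ holds.
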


\begin{proof}
Because of (\ref{S4C1E0}), there exist $C_0>0$ and $C_1>0$ that
\begin{equation}\label{S4C1E0+}
f(u)>C_0\frac{J(u)^{1+2/N+\e}}{J'(u)}\ \textrm{for}\ u\ge C_1.
\end{equation}
Here, $C_0>0$ can be arbitrary large, since $C_1>0$ can be arbitrary large and $\e>0$ can be arbitrary small.
Let $\hf(u):=\rho J(u)^{1+1/\rho}/J'(u)$, $0<\rho<N/2$.
Here, $\rho$ is determined later.
Then $\hF(u):=\int_u^{\infty}d\tau/\hf(\tau)=J(u)^{-1/\rho}$.
First, we consider the Cauchy problem
\begin{equation}\label{S4C1E1}
\begin{cases}
\partial_t u=\Delta u+\hf (u) & \textrm{in}\ \RN\times (0,T),\\
u(x,0)=u_0(x) & \textrm{in}\ \RN.
\end{cases}
\end{equation}
By direct calculation we have
\begin{equation}\label{S4C1E1+-}
\hf'(u)\hF (u)=\rho+1-\rho\frac{J(u)J''(u)}{J'(u)^2}\to \rho+1-{\rho\gamma}
\ \ \textrm{as}\ \ u\to\infty.
\end{equation}
By (\ref{S4C1E1+-}) we see that $\hf'(u)>0$ for large $u>0$.
We can take $C_1>0$ such that $\hf'(u)>0$ for $u>C_1$.
Then, we can modify $\hf(u)$, $0\le u\le C_1$, such that $\hf$ satisfies (\ref{f}).
Hereafter, we do not use $\hf(u)$ in $0\le u\le C_1$. 

By L'Hospital's rule we see the following limit $\gamma$ exists:
\[
\gamma=\lim_{u\to\infty}\frac{J(u)J''(u)}{J'(u)^2}=\lim_{u\to\infty}\left(\frac{1}{2}+\frac{J(u)J'''(u)}{2J'(u)J''(u)}\right)=\frac{1}{2}+\frac{\delta}{2}.
\]
Moreover,
\[
\eta=\lim_{u\to\infty}\frac{J(u)^2J'''(u)}{J'(u)^3}=\lim_{u\to\infty}\frac{J(u)J''(u)}{J'(u)^2}\frac{J(u)J'''(u)}{J'(u)J''(u)}=\delta\gamma.
\]
Therefore, $\eta=\gamma-2\gamma^2$.
By direct calculation we have
\begin{multline*}
\lim_{u\to\infty}\frac{\hf''(u)}{\rho J(u)^{1+1/\rho}J'(u)}\\
=\lim_{u\to\infty}\left(
\frac{1}{\rho}\left(1+\frac{1}{\rho}\right)-\left(1+\frac{1}{\rho}\right)\frac{J(u)J''(u)}{J'(u)^2}
+2\left(\frac{J(u)J''(u)}{J'(u)^2}\right)^2-\frac{J(u)^2J'''(u)}{J'(u)^3}
\right)
=\frac{1}{\rho}(1-\gamma)+\frac{1}{\rho^2}> 0,
\end{multline*}
and hence $\hf''(u)\ge 0$ for large $u>0$.
Here, we see that $0\le\gamma\le 1$ as mentioned in the proof of Corollary~\ref{S3C1}.
We can retake $C_1>0$ such that $\hf''(u)>0$ for $u>C_1$.
Then, we again modify $\hf(u)$, $0\le u\le C_1$, such that $\hf''(u)$ for $u\ge 0$.
Hereafter, we do not use $\hf(u)$ in $0\le u\le C_1$. 

Since $\hf\in C^2$, by (\ref{S4C1E1+-}) we see that $\hf\in C^2[0,\infty)\cap X_q$ with $q=1+\rho(1-\gamma)$.
We see that $1<q<1+N/2$.
We have checked all the assumptions of Proposition~\ref{S1P2}~(ii).
It follows from Proposition~\ref{S1P2}~(ii) that, for each $r\in [q-1,N/2)$, there exists a nonnegative function $u_0\in {L}^1_{\rm ul}(\RN)$ such that $\hF(u_0)^{-r}\in L^1_{\rm ul}(\RN)$ and (\ref{S4C1E1}) admits no nonnegative solution.
Without loss of generality, we can assume that $u_0\ge C_1$.
Since $q-1\le \rho<N/2$, we can take $r=\rho$.
We have
\begin{equation}\label{S4C1E1+}
\hF(u_0)^{-r}=J(u_0)\in L^1_{\rm ul}(\RN).
\end{equation}

Second, we consider (\ref{S1E1}).
We take $\rho=N/(N\e+2)$.
Then, $q-1=\rho(1-\gamma)\le\rho <N/2$, and hence all the conditions before are satisfied.
Because of (\ref{S4C1E0+}), we have
\[
f(u)>C_0\frac{J(u)^{1+2/N+\e}}{J'(u)}>\rho \frac{J(u)^{1+1/\rho}}{J'(u)}=\hf(u)
\ \ \textrm{for}\ \ u\ge C_1.
\]
Suppose that (\ref{S1E1}) with the initial function $u_0$ has a solution $u(t)$.
Then,
\[
u(t)=S(t)u_0+\int_0^tS(t-s)f(u(s))ds\ge S(t)u_0+\int_0^tS(t-s)\hf(u(s))ds,
\]
and hence $u(t)$ is a supersolution for (\ref{S4C1E1}).
By Proposition~\ref{S2P1} we see that (\ref{S4C1E1}) has a nonnegative solution.
However, it contradicts the nonexistence of a nonnegative solution of (\ref{S4C1E1}) with $u_0$.
Thus, (\ref{S1E1}) with $u_0$ admits no nonnegative solution.
By (\ref{S4C1E1+}) we obtain the conclusion of the corollary.
\end{proof}

\begin{remark}\label{S4R1}
\mbox{}
\begin{enumerate}
\item When $f(u)=J(u)^{1+2/N}/J'(u)$, it follows from Theorem~\ref{THA} that (\ref{S1E1}) admits a local in time nonnegative solution for every $u_0\ge 0$ satisfying $J(u_0)\in\mathcal{L}^1_{\rm ul}(\RN)$.
Therefore,  we cannot take $\e=0$ in Corollary~\ref{S4C1}.
\item Remark~\ref{S4R1}~(i) indicates that a threshold growth is $f(u)=J(u)^{1+2/N}/J'(u)$ when the integrability condition is $J(u_0)\in\mathcal{L}^1_{\rm ul}(\RN)$.
On the other hand, Theorem~\ref{THA} and Proposition~\ref{S1P2}~(ii) indicate that a threshold integrability condition is $F(u_0)^{-N/2}\in\mathcal{L}^1_{\rm ul}(\RN)$ when the growth is $f(u)$.
\item Since $q:=1+\rho(1-\gamma)<1+N/2$, the extremal case is $\gamma=0$ and $\rho=N/2$.
In this case the $q$ exponent of $\hf(u)$, which is given by (\ref{S4C1E1+-}), is $1+N/2$.
Since $(q,r)=(1+N/2,N/2)$, (\ref{S4C1E1}) is a doubly critical case.
\end{enumerate}
\end{remark}

Using Theorem~\ref{THC}, we can obtain a nonexistence result corresponding to Theorem~\ref{TH1}.
\begin{corollary}\label{S4C2}
Let $g_{\gamma}(u)=u[\log(u+e)]^{\gamma}$.
Suppose that $f\in C[0,\infty)$, $f$ is nonnegative and $f$ is nondecreasing for $u>0$.
Suppose that $J\in C^2[0,\infty)$ satisfies (\ref{J}) and
\begin{equation}\label{S4C2E-1}
\lim_{u\to\infty}\frac{J(u)J''(u)}{J'(u)^2}=0.
\end{equation}
If there exists $\gamma\in (0,N/2)$ such that
\begin{equation}\label{S4C2E0}
\liminf_{u\to\infty}\frac{f(u)J'(u)\left[\log(J(u)+e)\right]^{2\gamma/N}}{J(u)^{1+2/N}}>0,
\end{equation}
and
\begin{equation}\label{S4C2E0+}
\frac{d^2}{du^2}\left(g_{\gamma}^{-1}(J(u))\right)\le 0\ \ \textrm{for large}\ \ u>0.
\end{equation}
then there exists a nonnegative function $u_0$ such that $J(u_0)\in L^1_{\rm ul}(\RN)$  and (\ref{S1E1}) admits no nonnegative solution.

In particular, if $J(u)=g_{\alpha}(u)$ for some $\alpha\in (0,N/2)$ and $\liminf_{u\to\infty}f(u)/u^{1+2/N}>0$, then there exists a nonnegative function $u_0$ such that $J(u_0)\in L^1_{\rm ul}(\RN)$ and (\ref{S1E1}) admits no nonnegative solution. 
\end{corollary}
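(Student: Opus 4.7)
The plan is to reduce nonexistence for $f$ to Proposition~\ref{S1P2+}~(ii) via the change of variables $v:=G(u)$ with $G:=g_\gamma^{-1}\circ J$. The concavity hypothesis (\ref{S4C2E0+}) and the growth bound (\ref{S4C2E0}) are precisely calibrated so that this substitution converts any $f$-solution into a supersolution of the pure-power equation $\partial_t w=\Delta w+c_0 w^{1+2/N}$, at which point Proposition~\ref{S2P1} yields a contradiction with a well-chosen bad initial datum.

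First I would invoke Proposition~\ref{S1P2+}~(ii) with $r=\gamma\in(0,N/2)$ to pick $\phi\in Z_\gamma\cap L^1_{\rm ul}(\RN)$ for which $\partial_t w=\Delta w+c_0 w^{1+2/N}$ admits no nonnegative solution with initial datum $\phi$; by adding a large positive constant (which preserves both membership in $Z_\gamma$ and nonexistence, via a trivial supersolution comparison) one may arrange that $\phi\ge C_0$ is bounded below by a large constant, as in the Baras--Cohen construction $\phi(x)=|x|^{-N}(\log(1/|x|))^{-N/2-1+\e}$ for $|x|\le m$ and constant outside. Set $u_0:=G^{-1}(\phi)=J^{-1}(g_\gamma(\phi))$, so that $J(u_0)=g_\gamma(\phi)=\phi[\log(\phi+e)]^\gamma\in L^1_{\rm ul}(\RN)$ since $\phi\in Z_\gamma$.

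Now suppose (\ref{S1E1}) admits a nonnegative solution $u(t)$ with this $u_0$. Because $u(t)\ge S(t)u_0\ge G^{-1}(C_0)$, we remain throughout the argument in the regime where (\ref{S4C2E0+}) and (\ref{S4C2E0}) are both in effect. Setting $v:=G(u)$,
\[
\partial_t v=\Delta v-G''(u)|\nabla u|^2+G'(u)f(u)\ge\Delta v+G'(u)f(u),
\]
since $G''\le 0$. Using the identities $G'(u)=J'(u)/g_\gamma'(G(u))$, $g_\gamma'(G(u))\sim[\log G(u)]^\gamma$, $J(u)=G(u)[\log(G(u)+e)]^\gamma$, and $\log J(u)\sim\log G(u)$, combined with (\ref{S4C2E0}), one checks that the logarithmic factors cancel exactly and yield $G'(u)f(u)\ge c_0 G(u)^{1+2/N}=c_0 v^{1+2/N}$ for $u\ge G^{-1}(C_0)$. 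Duhamel manipulation (identical to the passage from the differential to the integral inequality in the proof of Theorem~\ref{THC}) upgrades this pointwise bound to the integral supersolution inequality $v(t)\ge S(t)\phi+\int_0^t S(t-s)c_0 v(s)^{1+2/N}\,ds$, and the concavity of $G$ yields $v\lesssim u+1$, hence $v\in L^\infty((0,T),L^1_{\rm ul}(\RN))\cap L^\infty_{\rm loc}((0,T),L^\infty(\RN))$. Proposition~\ref{S2P1} then produces a nonnegative solution of the pure-power equation with initial datum $\phi$, contradicting the choice of $\phi$. For the ``in particular'' case one takes $\gamma=\alpha$, so $G=\mathrm{id}$ and $v=u$, and the entire substitution collapses to the direct comparison $f(u)\ge c u^{1+2/N}$.

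The main technical obstacle is the exact cancellation $G'(u)f(u)\ge c_0 v^{1+2/N}$: the exponent $2\gamma/N$ in (\ref{S4C2E0}) is calibrated so that the logarithmic corrections from $g_\gamma'(G(u))$, $J'(u)$, and $[\log J(u)]^{2\gamma/N}$ match exactly, and the concavity hypothesis (\ref{S4C2E0+}) then disposes of the $G''|\nabla u|^2$ term that would otherwise obstruct the supersolution inequality. Everything else is routine semigroup manipulation.
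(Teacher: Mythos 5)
Your strategy---transplanting the problem onto the pure power equation via the concave change of variables $G=g_\gamma^{-1}\circ J$ and then invoking a doubly critical nonexistence result---is coherent, and the logarithmic cancellation you identify (using $J(u)=v[\log(v+e)]^\gamma$, $G'=J'/g_\gamma'(v)$ and $\log(J(u)+e)\sim\log(v+e)$) is indeed the calibration behind hypotheses (\ref{S4C2E0}) and (\ref{S4C2E0+}). But there is one genuine gap. To write $\partial_t v=\Delta v-G''(u)|\nabla u|^2+G'(u)f(u)$ you need $u$ to be a classical solution of $\partial_t u=\Delta u+f(u)$ for $t>0$. In Corollary~\ref{S4C2} the nonlinearity $f$ is only assumed continuous and nondecreasing, not $C^1$ as in (\ref{f}); for a mild solution in the sense of Definition~\ref{S1D1} the Duhamel term with a merely continuous bounded inhomogeneity is $C^{1,\sigma}$ in $x$ but not $C^2$ in general, so $\Delta u$ need not exist pointwise and the identity $\Delta v=G'(u)\Delta u+G''(u)|\nabla u|^2$ is not justified. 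The paper's proof is organized precisely to avoid this: it first builds a smooth auxiliary nonlinearity $\hf(u)=\frac N2\,g_\gamma^{-1}(J(u))^{1+2/N}g_\gamma'(g_\gamma^{-1}(J(u)))/J'(u)$ (so that $\hF(u)=g_\gamma^{-1}(J(u))^{-2/N}$ and, by (\ref{S4C2E0+}), $\hf'\hF\ge 1+N/2$), shows $f\ge\hf$ for large $u$ using (\ref{S4C2E0}), and compares at the level of the integral equation---which needs no regularity of $u$; the change of variables is then performed only inside Theorem~\ref{THC}, on the solution of the $\hf$-equation, where $\hf$ has been modified to satisfy (\ref{f}). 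Your argument is repairable along the same lines (compare $f$ with a smooth minorant before changing variables), but as written the pointwise computation fails.

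Beyond that gap, your route is genuinely different from the paper's and worth contrasting. You reduce to the pure power equation and invoke Proposition~\ref{S1P2+}~(ii) (plus a harmless scaling to absorb $c_0$), whereas the paper reduces to the doubly critical class $\hf\in X_{1+N/2}$ and invokes its own Theorem~\ref{THC}, hence its self-contained Theorem~\ref{S4T1} for $f_\beta$ with $\beta>0$. Proposition~\ref{S1P2+}~(ii) is stated in the paper but imported from the literature, and its compatibility with Definition~\ref{S1D1} is not verified there (the analogous Remark~\ref{S1R1}~(iv) is made only for Proposition~\ref{S1P2}~(ii)); the paper's detour through Theorem~\ref{THC} keeps everything inside its own solution framework. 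Structurally your proof resembles the paper's proof of Corollary~\ref{S4C1} (auxiliary problem, known bad datum, supersolution comparison via Proposition~\ref{S2P1}) more than its proof of Corollary~\ref{S4C2}. Your remaining steps---the identification $J(u_0)=g_\gamma(\phi)\in L^1_{\rm ul}(\RN)$ for $\phi\in Z_\gamma$, the persistence of nonexistence under adding a constant to the datum, the bound $v\lesssim u+1$ from concavity, and the collapse of the substitution to the identity in the case $J=g_\alpha$, $\gamma=\alpha$---are all correct.
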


\begin{proof}
Because of (\ref{S4C2E-1}), there exist $C_0>0$ and $C_1>1$ such that
\[
f(u)>C_0\frac{J(u)^{1+2/N}}{J'(u)[\log(J(u)+e)]^{2\gamma/N}}
\ \ \textrm{for}\ u\ge C_1.
\]
Note that, for each large $C_0>0$, we can retake $\gamma\in (0,N/2)$ and $C_1>0$ such that the above inequality holds.
Let $g_{\gamma}^{-1}(u)$ denote the inverse function of $g_{\gamma}$.
We define
\[
\hf(u):=\frac{N}{2}\frac{g_{\gamma}^{-1}(J(u))^{1+2/N}g_{\gamma}'(g_{\gamma}^{-1}(J(u)))}{J'(u)}.
\]
Then, $\hF(u):=\int_u^{\infty}d\tau/\hf(\tau)=g_{\gamma}^{-1}(J(u))^{-2/N}$.
First, we consider the Cauchy problem
\begin{equation}\label{S4C2E1}
\begin{cases}
\partial_tu=\Delta u+\hf(u) & \textrm{in}\ \RN\times (0,T),\\
u(x,0)=u_0(x) & \textrm{in}\ \RN.
\end{cases}
\end{equation}
By direct calculation we have
\begin{equation}\label{S4C2E2}
\hf'(u)\hF(u)
=1+\frac{N}{2}+\frac{N}{2}\frac{vg_{\gamma}'(v)}{g_{\gamma}(v)}
\left(
\frac{g_{\gamma}(v)g_{\gamma}''(v)}{g_{\gamma}'(v)^2}-
\frac{J(u)J''(u)}{J'(u)^2}
\right),
\end{equation}
where $v:=g_{\gamma}^{-1}(J(u))$.
As $v\to\infty$,
\begin{equation}\label{S4C2E2+}
\frac{vg_{\gamma}'(v)}{g_{\gamma}(v)}
=\frac{v[\log(v+e)]^{\gamma}(1+o(1))}{v[\log(v+e)]^{\gamma}}\to 1
\ \ \textrm{and}\ \ 
\frac{g_{\gamma}(v)g_{\gamma}''(v)}{g_{\gamma}'(v)^2}
=\frac{v[\log(v+e)]^{\gamma}\frac{\gamma}{v+e}[\log(v+e)]^{\gamma-1}(1+o(1))}{[\log(v+e)]^{2\gamma}(1+o(1)))}\to 0.
\end{equation}
By (\ref{S4C2E2+}), (\ref{S4C2E2}) and (\ref{S4C2E-1}) we have $\lim_{u\to\infty}\hf'(u)\hF(u)=1+N/2$.
Because of (\ref{S4C2E0+}), we have
\begin{equation}\label{S4C2E2++}
\frac{g_{\gamma}(v)g_{\gamma}''(v)}{g_{\gamma}'(v)^2}-\frac{J(u)J''(u)}{J'(u)^2}=
-\frac{g_{\gamma}(v)}{g_{\gamma}'(v)}\frac{\frac{d^2v}{du^2}}{\left(\frac{dv}{du}\right)^2}\ge 0.
\end{equation}
By (\ref{S4C2E2++}) and (\ref{S4C2E2}) we have
\begin{equation}\label{S4C2E2+++}
\hf'(u)\hF(u)=1+\frac{N}{2}-\frac{N}{2}\frac{v\frac{d^2v}{du^2}}{\left(\frac{dv}{du}\right)^2}\ge 1+\frac{N}{2}.
\end{equation}
By (\ref{S4C2E2+++}) we see that $\hf'(u)>0$ for large $u>0$.
Hence, we can take $C_1>0$ such that $\hf'(u)>0$ for $u>C_1$.
Moreover, we can modify $\hf(u)$, $0\le u\le C_1$, such that $\hf$ satisfies (\ref{f}).
Hereafter, we do not use $\hf(u)$ in $0\le u\le C_1$. 

Now we prove Remark~\ref{S1R3}~(i).
Assume that there exists $c>0$ such that $f'(F^{-1}(v))F(F^{-1}(v))\ge 1+N/2$ for $0<v\le c$.
Let $\beta>0$.
Since $f'_{\beta}(u)/f_{\beta}(u)^{1/(1+N/2)}$ is nondecreasing for $u>0$, we obtain in the same way as (\ref{S2E1E1}) that $f'_{\beta}(u)F_{\beta}(u)\le 1+N/2$ for $u>0$.
This implies that $f'_{\beta}(F_{\beta}^{-1}(v))F_{\beta}(F_{\beta}^{-1}(v))\le 1+N/2$ for $0<v<F_{\beta}(0)=\infty$.
Thus we obtain $f'(F^{-1}(v))\ge (1+N/2)/v\ge f'_{\beta}(F_{\beta}^{-1}(v))$ for $0<v\le c$.

By Remark~\ref{S1R3}~(i) and (\ref{S4C2E2+++}) we see that the assumption Theorem~\ref{THC}~(i) is satisfied.
Because of (\ref{J}), we see that $J(u)\ge C_2u$ for large $u>0$.
Since $0<\gamma<N/2$, there exist $\delta\in (0,1)$ and $C_3>0$ such that
\[
J(u)\ge C_2u\ge C_3u[\log(u+e)]^{\gamma-N\delta/2}
\ \ \textrm{for large}\ u>0.
\]
Then,
\[
\hF(u)=g_{\gamma}^{-1}(J(u))^{-\frac{2}{N}}
\le g_{\gamma}^{-1}(C_3u[\log(u+e)]^{\gamma-\frac{N\delta}{2}})^{-\frac{2}{N}}
\le C_4u^{-\frac{2}{N}}[\log(u+e)]^{\delta},
\]
and hence the assumption Theorem~\ref{THC}~(ii) is satisfied.
Using Theorem~\ref{THC}, we see that for each $\alpha\in [0,N/2)$, there exists $u_0\ge 0$ such that $J_{\alpha}(u_0)=g_{\alpha}(\hF(u_0)^{-N/2})\in L^1_{\rm ul}(\RN)$ and (\ref{S1E1}) admits no nonnegative solution.
We take $\alpha=\gamma$.
Then,
\begin{equation}\label{S4C2E4}
J(u)=J_{\gamma}(u)\in L^1_{\rm ul}(\RN).
\end{equation}

We consider (\ref{S4C2E1}).
Since
\begin{multline*}
\frac{N}{2}g_{\gamma}^{-1}(J(u))^{1+2/N}g_{\gamma}'(g_{\gamma}^{-1}(J(u)))
\le C_5(J(u)[\log(J(u)+e)]^{-\gamma})^{1+2/N}[\log(J(u)+e)]^{\gamma}\\
=C_5J(u)^{1+2/N}[\log(J(u)+e)]^{-2\gamma/N}\ \ \textrm{for large}\ u>0,
\end{multline*}
there exists $C_6>C_1$ such that $C_0>C_5$ and
\begin{multline*}
f(u)>C_0\frac{J(u)^{1+2/N}}{J'(u)[\log(J(u)+e)]^{2\gamma/N}}
\ge
C_5\frac{J(u)^{1+2/N}}{J'(u)[\log(J(u)+e)]^{2\gamma/N}}\\
\ge
\frac{N}{2}\frac{g_{\gamma}^{-1}(J(u))^{1+2/N}g_{\gamma}'(g_{\gamma}^{-1}(J(u)))}{J'(u)}
=\hf(u)
\ \ \textrm{for}\ u\ge C_6.
\end{multline*}
We can assume that $u_0\ge C_6$.
Suppose that (\ref{S1E1}) with the initial function $u_0$ has a solution $u(t)$. Then
\[
u(t)=S(t)u_0+\int_0^tS(t-s)f(u(s))ds
\ge S(t)u_0+\int_0^tS(t-s)\hf(u(s))ds,
\]
and hence $u(t)$ is a supersolution for (\ref{S4C2E1}).
By Proposition~\ref{S2P1} we see that (\ref{S4C2E1}) has a nonnegative solution.
However, it contradicts the nonexistence of a nonnegative solution of (\ref{S4C2E1}) with $u_0$.
Thus, (\ref{S1E1}) with $u_0$ admits no nonnegative solution.
By (\ref{S4C2E4}) we obtain the first statement of the corollary.

Next, we prove the second statement.
We take $\gamma=\alpha$.
Then, (\ref{S4C2E0+}) holds, and $\liminf_{u\to\infty}f(u)/u^{1+2/N}>0$ implies (\ref{S4C2E0}).
Thus, the second statement follows from the first statement.
\end{proof}

\section{Solvability in $\mathcal{L}^1_{\rm ul}(\RN)$}
Let $J(u)=u$, $\theta=1$ and $\xi=1$.
We use Theorem~\ref{TH1}~(i) to obtain the following:
\begin{corollary}\label{S5C1}
Let $u_0\ge 0$.
Suppose that $f\in C[0,\infty)$, and $f$ is nonnegative and nondecreasing.
If
\begin{equation}\label{S5C1E1}
\int_1^{\infty}\frac{\tf(u)du}{u^{1+2/N}}<\infty,\ \ \textrm{where}\ \ 
\tf(u):=\sup_{1\le\tau\le u}\frac{f(\tau)}{\tau},
\end{equation}
then (\ref{S1E1}) admits a local in time nonnegative solution $u(t)$, $0<t<T$, for each $u_0\in L^1_{\rm ul}(\RN)$.
Moreover, $\left\|u(t)\right\|_{L^1_{\rm ul}(\RN)}\le C$ for $0<t<T$.
\end{corollary}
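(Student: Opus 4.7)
The plan is to apply Theorem~\ref{TH1}~(i) directly with the choices $J(u)=u$, $\theta=1$, and $\xi=1$, as indicated in the paragraph preceding the corollary. Essentially all the work is verifying that the hypotheses of Theorem~\ref{TH1}~(i) reduce exactly to the stated assumption \eqref{S5C1E1}, so the proof is a short verification rather than a substantive argument.

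First I would check that $J(u)=u$ satisfies (\ref{J}): $J\in C^2[0,\infty)$, $J(u)\to\infty$, $J(u)>0$ for $u>0$, and $J'(u)\equiv 1$ is positive and (trivially) nondecreasing. The hypothesis $J(u_0)\in L^1_{\rm ul}(\RN)$ is the same as $u_0\in L^1_{\rm ul}(\RN)$, which is given. The standing conditions on $f$ (continuous, nonnegative, nondecreasing) are assumed in the corollary.

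Next I would compute the objects $\tf$ and $\tJ$ from (\ref{TH1E0}) with $\theta=1$ and $\xi=1$:
\[
\tf(u)=\sup_{1\le\tau\le u}\frac{f(\tau)}{J(\tau)^{1}}=\sup_{1\le\tau\le u}\frac{f(\tau)}{\tau},
\qquad
\tJ(u)=\sup_{1\le\tau\le u}\frac{J'(\tau)}{J(\tau)^{0}}=1.
\]
The first identity matches the definition of $\tf$ in \eqref{S5C1E1} exactly, and $\tJ$ is the constant $1$.

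With these identifications, the hypothesis (\ref{TH1E1}) of Theorem~\ref{TH1}~(i) becomes
\[
\lim_{\eta\to\infty}\tJ(\eta)\int_{\eta}^{\infty}\frac{\tf(\tau)J'(\tau)\,d\tau}{J(\tau)^{1+2/N}}
=\lim_{\eta\to\infty}\int_{\eta}^{\infty}\frac{\tf(\tau)\,d\tau}{\tau^{1+2/N}}=0,
\]
which is an immediate consequence of the convergence of the integral in \eqref{S5C1E1}. Therefore Theorem~\ref{TH1}~(i) applies and yields a local nonnegative solution $u(t)$ on $(0,T)$ together with the bound $\|J(u(t))\|_{L^1_{\rm ul}(\RN)}=\|u(t)\|_{L^1_{\rm ul}(\RN)}\le C_0$ from \eqref{TH1E2+}. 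There is no real obstacle here; the only thing to be careful about is to state explicitly that the $\tf$ defined in \eqref{TH1E0} coincides with the one in \eqref{S5C1E1} under the present choice of $\theta$ and $\xi$, so that the two hypotheses are literally the same.
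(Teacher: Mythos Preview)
Your proof is correct and follows exactly the approach indicated in the paper: apply Theorem~\ref{TH1}~(i) with $J(u)=u$, $\theta=1$, $\xi=1$, so that $\tJ\equiv 1$, the $\tf$ of \eqref{TH1E0} coincides with that of \eqref{S5C1E1}, and \eqref{TH1E1} is immediate from the convergence of the integral. The paper itself gives no further argument beyond this sentence, so your verification is precisely what is intended.
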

As mentioned in Section~1, \cite{LRSV16} obtained a necessary and sufficient condition on $f$ for a solvability of (\ref{S1E1+}) in $L^1(\Omega)$.
Here, we use the following nonexistence result:
\begin{proposition}[{\cite[Theorem 4.1 and Lemma 4.2]{LRSV16}}]\label{S5P1}
Let $\Omega$ be a bounded domain in $\RN$.
Let $f\in C([0,\infty))$ be nonnegative and nondecreasing.
If
\begin{equation}\label{S5P1E1}
\int_1^{\infty}\frac{\tf(u)du}{u^{1+2/N}}=\infty,\ \ \textrm{where}\ \ \tf(u):=\sup_{1\le\tau\le u}\frac{f(\tau)}{\tau},
\end{equation}
then there is a nonnegative function $u_0\in L^1(\Omega)$ such that (\ref{S1E1+}) admits no local in time nonnegative solution in $L^1(\Omega)$.
Specifically, for each small $t>0$,
\begin{equation}\label{S5P1E2}
\int_{\Omega}\int_0^tS_{\Omega}(t-s)f(S_{\Omega}(s)u_0)dsdx=\infty.
\end{equation}
Here, $S_{\Omega}(t)[\phi](x):=\int_{\Omega}K_{\Omega}(x,y,t)\phi(y)dy$ and $K_{\Omega}(x,y,t)$ denotes the Dirichlet heat kernel on $\Omega$.
\end{proposition}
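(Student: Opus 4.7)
The plan is to construct a bad nonnegative $u_0 \in L^1(\Omega)$ whose Duhamel integrand blows up, forcing~\eqref{S5P1E2}. Fix an interior point $x_* \in \Omega$ and a compact neighborhood $K \subset\subset \Omega$ on which the Dirichlet heat kernel admits the standard two-sided parabolic Gaussian bounds for small times. Since $\tilde f$ is nondecreasing, the hypothesis $\int_1^\infty \tilde f(u) u^{-1-2/N} du = \infty$ dyadically becomes $\sum_{k\ge 1} 2^{-2k/N} \tilde f(2^k) = \infty$, and for each $k$ the supremum definition delivers a point $\sigma_k \in [1, 2^k]$ with $f(\sigma_k) \ge \sigma_k \tilde f(2^k)/2$; these are the quantitative inputs for the construction.

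I would then build $u_0$ as a singular radial profile concentrated at $x_*$: a nonincreasing function $\Psi(r)$ that is constant, equal to some height $h_k$, on each dyadic shell $A_k := \{2^{-k-1} \le r \le 2^{-k}\}$, with the $h_k$ to be calibrated among the $\sigma_k$. The parabolic heat kernel lower bound gives, at times $s$ comparable to $4^{-k}$ and points at distance $\sim 2^{-k}$ from $x_*$, the pointwise estimate $S_\Omega(s) u_0(x) \gtrsim h_k$ from averaging over $A_k$, and more generally $S_\Omega(s) u_0(x) \gtrsim h_j$ for every $j \le k$ via averaging over the larger shells. Monotonicity of $f$ then bounds $\int_0^t \int_\Omega f(S_\Omega(s) u_0) \, dx\, ds$ from below by a dyadic sum which, after invoking $f(\sigma_k) \ge \sigma_k \tilde f(2^k)/2$, matches (up to constants) the divergent series $\sum_k 2^{-2k/N} \tilde f(2^k)$, provided we maintain $\|u_0\|_{L^1(\Omega)} \sim \sum_k h_k 2^{-Nk} < \infty$.

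The hard part is precisely this calibration in the critical regime $\tilde f(u) \sim u^{2/N}$: the dyadic summands $a_k := \tilde f(2^k) 2^{-2k/N}$ are then bounded but $\sum a_k$ diverges merely because $a_k \not\to 0$. A naive \emph{disjoint}-bump construction with $h_k 2^{-Nk} = \delta_k$ summable would give nonlinear contribution $\sim \sum_k a_k \delta_k^{1+2/N}$, which necessarily converges since $\delta_k \to 0$ and $1 + 2/N > 1$. Overcoming this forces one to use \emph{nested} supports so that the semigroup's averaging at each point simultaneously recovers the full collection of heights $\{h_j\}_{j \le k}$ at scale $s \sim 4^{-k}$; this cancels the penalty factor $\delta_k^{2/N}$ and restores divergence. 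The concluding step is routine: by Fubini, the left-hand side of~\eqref{S5P1E2} equals $\int_0^t \int_\Omega f(S_\Omega(s) u_0)(y) \bigl( \int_\Omega K_\Omega(x, y, t-s) \, dx \bigr) dy\, ds$, and the inner integral is uniformly bounded below on $K$ for small $t - s$ by the standard Dirichlet-semigroup estimate on compact interior subsets, so divergence of the unweighted integral transfers directly.
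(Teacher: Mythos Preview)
The paper does not prove this proposition; it is quoted from \cite{LRSV16} (their Theorem~4.1 and Lemma~4.2) and used as a black box in the proof of Theorem~\ref{S5T1}. So there is no in-paper argument to compare against, only the original source.

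Your overall architecture---dyadic decomposition of the divergent integral, extraction of points $\sigma_k\in[1,2^k]$ realising the supremum in $\tilde f$, a singular profile concentrated at an interior point, and Gaussian lower bounds on the Dirichlet heat kernel---matches the strategy of \cite{LRSV16}. You also correctly isolate the genuine obstruction: in the borderline regime $\tilde f(u)\sim u^{2/N}$ the dyadic summands $a_k=\tilde f(2^k)2^{-2k/N}$ are merely bounded, and a disjoint-bump construction with $L^1$ weights $\delta_k\to 0$ yields a convergent nonlinear sum.

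The gap is in how you resolve this obstruction. The sentence ``nested supports \ldots\ simultaneously recovers the full collection of heights $\{h_j\}_{j\le k}$ at scale $s\sim 4^{-k}$; this cancels the penalty factor $\delta_k^{2/N}$'' is not an argument. Nesting does give $S_\Omega(s)u_0\gtrsim h_k$ on $B(x_*,2^{-k})$ at times $s\sim 4^{-k}$ (since your nonincreasing $\Psi$ satisfies $u_0\ge h_k$ on all of $B(x_*,2^{-k})$), but the resulting lower bound on the nonlinear integral is $\sum_k 2^{-(N+2)k}f(ch_k)$, and you must make this diverge subject to the $L^1$ constraint $\sum_k h_k 2^{-Nk}<\infty$. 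Exhibiting such a sequence $(h_k)$, given only $\sum_k 2^{-2k/N}\tilde f(2^k)=\infty$ and the link $f(\sigma_k)\ge\tfrac12\sigma_k\tilde f(2^k)$ with $\sigma_k$ anywhere in $[1,2^k]$, is precisely the content of \cite[Lemma~4.2]{LRSV16}; it requires a nontrivial subsequence extraction that you have not supplied. In the cited proof this is carried out with \emph{disjoint} balls at distinct centres (not concentric shells), together with a selection of heights $\phi_n$ and radii $r_n$ making $\sum_n \phi_n r_n^N<\infty$ while $\sum_n r_n^{N+2}f(c\phi_n)=\infty$. Without that extraction lemma or an equivalent, your sketch does not close in the critical case you yourself flagged.
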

Using Corollary~\ref{S5C1} and Proposition~\ref{S5P1}, we obtain the following characterization:
\begin{theorem}\label{S5T1}
Let $u_0\ge 0$.
Suppose that $f\in C[0,\infty)$, $f$ is nonnegative and nondecreasing.
Then, (\ref{S1E1}) admits a local in time nonnegative solution for all $u_0\in\mathcal{L}^1_{\rm ul}(\RN)$ if and only if
(\ref{S5C1E1}) holds.
\end{theorem}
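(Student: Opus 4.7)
\textbf{Proof plan for Theorem~\ref{S5T1}.} The forward implication is immediate from Corollary~\ref{S5C1} together with the inclusion $\mathcal{L}^1_{\rm ul}(\RN)\subset L^1_{\rm ul}(\RN)$: whenever (\ref{S5C1E1}) holds, every $u_0\in\mathcal{L}^1_{\rm ul}(\RN)$ yields a local nonnegative solution. The substantive content is the converse, which I argue by contrapositive. Assuming (\ref{S5P1E1}), fix any bounded smooth domain $\Omega\subset\RN$ (say the unit ball) and apply Proposition~\ref{S5P1} to obtain a nonnegative $u_0\in L^1(\Omega)$ for which (\ref{S5P1E2}) holds for all sufficiently small $t>0$. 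Extend $u_0$ by zero to all of $\RN$; the extension is compactly supported and in $L^1(\RN)$, so standard mollification produces a sequence in $C_c^{\infty}(\RN)\subset BUC(\RN)$ converging to $u_0$ in $L^1(\RN)$. Because $\|\varphi\|_{L^1_{\rm ul}(\RN)}\le\|\varphi\|_{L^1(\RN)}$ for every $\varphi\in L^1(\RN)$, the convergence also takes place in $L^1_{\rm ul}(\RN)$, and therefore $u_0\in\mathcal{L}^1_{\rm ul}(\RN)$.

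Suppose toward contradiction that (\ref{S1E1}) with this $u_0$ admits a nonnegative solution $u$ on some interval $(0,T)$ in the sense of Definition~\ref{S1D1}. After shrinking $T$ if necessary, (\ref{S5P1E2}) holds for every $t\in (0,T)$. The pointwise heat-kernel comparison $K(x,y,t)\ge K_\Omega(x,y,t)$ on $\Omega\times\Omega\times(0,\infty)$ together with the mild formulation (\ref{S1D1E1}) gives, for $x\in\Omega$,
\[
u(x,t)\ge S_\Omega(t)u_0(x)+\int_0^t\int_\Omega K_\Omega(x,y,t-s)f(u(s,y))\,dy\,ds.
\]
Since $u(s,y)\ge S(s)u_0(y)\ge S_\Omega(s)u_0(y)$ and $f$ is nondecreasing, the integrand is bounded below by $K_\Omega(x,y,t-s)f(S_\Omega(s)u_0(y))$, whence
\[
u(x,t)\ge S_\Omega(t)u_0(x)+\int_0^t S_\Omega(t-s)f(S_\Omega(s)u_0)(x)\,ds\qquad\textrm{for}\ x\in\Omega.
\]
Integrating over $\Omega$ and invoking (\ref{S5P1E2}) forces $\int_\Omega u(x,t)\,dx=\infty$ for every sufficiently small $t>0$. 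But $\Omega$ being bounded is covered by finitely many unit balls, so $\int_\Omega u(x,t)\,dx\le C_\Omega\left\|u(t)\right\|_{L^1_{\rm ul}(\RN)}<\infty$ for a.e.\ $t\in(0,T)$, a contradiction.

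The principal technical point is the transfer from the Dirichlet divergence (\ref{S5P1E2}) to the Cauchy problem on $\RN$: one must combine the heat-kernel comparison $K\ge K_\Omega$ with the monotonicity of $f$ and the baseline bound $u\ge S(\cdot)u_0$ in exactly this order inside Duhamel, so that the right-hand side can be replaced by a purely Dirichlet object that is already known to diverge. The remaining ingredients---extension by zero, density of $C_c^\infty(\RN)$ in compactly supported $L^1$ functions, and the unit-ball covering bound for integrals over bounded sets---are routine.
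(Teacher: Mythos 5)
Your proposal is correct and follows essentially the same route as the paper: sufficiency via Corollary~\ref{S5C1}, and necessity by extending the Dirichlet counterexample of Proposition~\ref{S5P1} by zero (noting $L^1(\RN)\subset\mathcal{L}^1_{\rm ul}(\RN)$ via mollification), then transferring the divergence (\ref{S5P1E2}) to the whole-space problem through the kernel comparison $K\ge K_\Omega$ inside the Duhamel term. The only cosmetic differences are the order in which you apply the comparison and monotonicity steps and the explicit unit-ball covering at the end, neither of which changes the argument.
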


\begin{proof}
The sufficient part follows from Corollary~\ref{S5C1}.
Hereafter, we prove the necessary part.
Since $C_0^{\infty}(\RN)\subset L^1(\RN)$ is dense and $C_0^{\infty}(\RN)\subset BUC(\RN)$, we see that $L^1(\RN)\subset\mathcal{L}^1_{\rm ul}(\RN)$.
We assume (\ref{S5P1E1}).
Suppose the contrary, {\it i.e.}, (\ref{S1E1}) always has a nonnegative solution.
Let $u_0\in L^1(\Omega)$ be the initial function given in Proposition~\ref{S5P1}.
Here, we define $u_0=0$ in $\RN\setminus\Omega$.
Then, $u_0\in\mathcal{L}^1_{\rm ul}(\RN)$.
Since
\begin{equation}\label{S5T2E1}
u(t)=S(t)u_0+\int_0^tS(t-s)f(u(s))ds,
\end{equation}
we have
\begin{equation}\label{S5T2E2}
u(t)\ge S(t)u_0.
\end{equation}
By (\ref{S5T2E2}) and (\ref{S5T2E1}) we have
\[
u(t)\ge\int_0^tS(t-s)f(u(s))ds
\ge\int_0^tS(t-s)f(S(s)u_0)ds.
\]
Since two Green functions satisfy $K(x,y,t)\ge K_{\Omega}(x,y,t)$ (see, {\it e.g.}, \cite[Corollary 2.2]{B89}), we see that $S(t)u_0\ge S_{\Omega}(t)u_0$, and hence
\[
u(t)\ge\int_0^tS_{\Omega}(t-s)f(S_{\Omega}(s)u_0)ds.
\]
By (\ref{S5P1E2}) we see that $u(t)\not\in L^1_{\rm loc}(\RN)$ for small $t>0$, and hence (\ref{S1E1}) admits no nonnegative solution.
This is a contradiction.
Thus, the proof of the necessary part is complete, and the whole proof is also complete.
\end{proof}
Using Theorem~\ref{S5T1}, we show that the condition $\rho<1$ in Theorem~\ref{THB} is optimal.
Specifically, we cannot take $\rho=1$ in Theorem~\ref{THB}.

\begin{corollary}\label{S5C2}
Let $g(u):=u[\log(u+e)]^{\alpha}$, $\alpha\ge 0$, and let $f(u):=\frac{N}{2}g'(g^{-1}(u))g^{-1}(u)^{1+2/N}$.
Then the following hold:
\begin{enumerate}
\item $q=1+N/2$, $f\in X_q$ and
\begin{equation}\label{S5C2E1}
f'(u)F(u)-q=\frac{N\alpha}{2}\frac{h}{(h+e)\log(h+e)}
\frac{1+\frac{1}{h+e}+\frac{(\alpha-1)h}{(h+e)\log(h+e)}}{1+\frac{\alpha h}{(h+e)\log(h+e)}},
\end{equation}
where $h:=F(u)^{-N/2}$. In particular, (\ref{TH3E1}) with $\rho=1$ holds.
\item Let $J_{\alpha}(u):=g(F(u)^{-N/2})$, $\alpha:=N/2$.
Then, $J_{\alpha}(u)=u$ and there exists a nonnegative function $u_0\in\mathcal{L}^1_{\rm ul}(\RN)$ such that (\ref{S1E1}) admits no nonnegative solution.
\end{enumerate}
\end{corollary}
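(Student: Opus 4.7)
The plan is to exploit the change of variables $v := g^{-1}(u)$, which makes the structure of $f$ transparent, and then to apply Theorem~\ref{S5T1} for the nonexistence in (ii).

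For (i), the substitution $\tau = g(w)$ together with $f(g(w)) = (N/2)g'(w)w^{1+2/N}$ turns $F(u) = \int_u^{\infty} d\tau/f(\tau)$ into $(2/N)\int_v^{\infty} w^{-1-2/N}\,dw = v^{-2/N}$, so $F(u) = v^{-2/N}$ and $h := F(u)^{-N/2} = v = g^{-1}(u)$. Differentiating $f$ through the chain rule (using $dv/du = 1/g'(v)$) produces
\[
f'(u)F(u) \;=\; 1 + \frac{N}{2} + \frac{N}{2}\cdot\frac{vg''(v)}{g'(v)},
\]
and plugging in the explicit first and second derivatives of $g(v) = v[\log(v+e)]^{\alpha}$ and rewriting everything with $h$ in place of $v$ yields (\ref{S5C2E1}) after routine algebra. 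Since the correction term in (\ref{S5C2E1}) is $O(1/\log h) \to 0$ as $h\to\infty$, we deduce $q = 1+N/2$; a short check of the monotonicity of $f$ together with $F(u)<\infty$ for $u>0$ confirms that $f$ satisfies (\ref{f}) and hence $f\in X_q$. To verify (\ref{TH3E1}) with $\rho=1$, after cancelling the common factor $(N\alpha)/(2\log(h+e))$ the inequality reduces to
\[
\frac{h}{h+e}\left(1 + \frac{1}{h+e} + \frac{(\alpha-1)h}{(h+e)\log(h+e)}\right)
\;\le\; 1 + \frac{\alpha h}{(h+e)\log(h+e)},
\]
which, after clearing $h+e$, splits into the two elementary bounds $h/(h+e)\le 1 \le e$ and $(\alpha-1)h^2 \le \alpha h(h+e)$ (the latter equivalent to $-h \le \alpha e$).

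For (ii), specialising to $\alpha = N/2$ gives $J_{\alpha}(u) = g(F(u)^{-N/2}) = g(g^{-1}(u)) = u$, so $J_{\alpha}(u_0)\in\calL^1_{\rm ul}(\RN)$ becomes simply $u_0\in\calL^1_{\rm ul}(\RN)$. By the necessary direction of Theorem~\ref{S5T1}, it then suffices to verify the divergence $\int_1^{\infty}\tf(u) u^{-1-2/N}\,du = \infty$. Inverting $u = v[\log(v+e)]^{N/2}$ yields the asymptotic $v \sim u/(\log u)^{N/2}$ as $u\to\infty$, and combined with $f(u) \sim (N/2)v^{1+2/N}[\log(v+e)]^{N/2}$ this gives $f(u)/u \sim (N/2)u^{2/N}/\log u$; since $f(\tau)/\tau$ is eventually increasing we have $\tf(u) = f(u)/u$ for large $u$, so $\tf(u)/u^{1+2/N}\sim (N/2)(u\log u)^{-1}$, whose tail integral diverges. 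The main obstacle is the careful simplification needed to derive the closed form (\ref{S5C2E1}); once that is in hand, the limit $q = 1+N/2$, the membership $f \in X_q$, the $\rho = 1$ version of (\ref{TH3E1}), and the nonexistence in (ii) all follow cleanly.
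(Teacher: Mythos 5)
Your proposal is correct and follows essentially the same route as the paper: compute $F(u)=g^{-1}(u)^{-2/N}$ so that $h=g^{-1}(u)$ and $J_{N/2}(u)=u$, derive (\ref{S5C2E1}) by differentiating through the change of variables, verify (\ref{TH3E1}) with $\rho=1$ by elementary estimates, and for (ii) invoke the necessity direction of Theorem~\ref{S5T1} after showing $\int_1^{\infty}\tf(u)u^{-1-2/N}\,du=\infty$ (the paper does this by the exact substitution $\tau=F(u)^{-N/2}$, you by asymptotics—both fine). The only discrepancy is cosmetic: the careful chain-rule computation produces $\frac{e}{h+e}$ rather than the $\frac{1}{h+e}$ printed in (\ref{S5C2E1}) (compare the analogous expression in (\ref{eq02+})); this appears to be a typo in the statement and does not affect your verification of the $\rho=1$ inequality, which works with either version.
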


\begin{proof}
First, we prove (i).
By direct calculation we have $F(u)=(g^{-1}(u))^{-2/N}$ and $g(F(u)^{-N/2})=u$.
Differentiating $g(F(u)^{-N/2})=u$ with respect to $u$ twice, we obtain (\ref{S5C2E1}).
Since $h(u)(=g^{-1}(u))\to\infty$ $(u\to\infty)$, by (\ref{S5C2E1}) we see that $\lim_{u\to\infty}f'(u)F(u)=q$.
Since all other conditions on (\ref{f}) clearly hold, $f$ satisfies (\ref{f}), and hence $f\in X_q$ with $q=1+N/2$.
Using (\ref{S5C2E1}), a direct calculation reveals that (\ref{S1P2E1}) with $\rho=1$ holds.
The proof of (i) is complete.\\
\indent
Second, we prove (ii).
Let $u_1>1$ be large.
Since $J_{\alpha}(u)=u$, we have
\begin{multline*}
\int_1^{\infty}\frac{\tf(u)du}{u^{1+2/N}}
\ge\int_{u_1}^{\infty}\frac{f(u)du}{u^{2+2/N}}
=\int_{u_1}^{\infty}\frac{f(u)J_{\alpha}'(u)^2du}{J_{\alpha}(u)^{2+2/N}}
=\frac{N}{2}
\int_{F(u_1)^{-{N}/{2}}}^{\infty}\frac{g'(\tau)^2\tau^{1+2/N}d\tau}{g(\tau)^{2+2/N}}\\
=\int_{F(u_1)^{-{N}/{2}}}^{\infty}\frac{\left(1+\frac{N\tau}{2(\tau+e)\log(\tau+e)}\right)^2}{\tau\log(\tau+e)}d\tau
\ge\int_{F(u_1)^{-{N}/{2}}}^{\infty}\frac{d\tau}{(\tau+e)\log(\tau+e)}
=\left[\log\log(\tau+e)\right]_{F(u_1)^{-{N}/{2}}}^{\infty}=\infty,
\end{multline*}
where we used the change of variables $\tau=F(u)^{-N/2}$.
By Theorem~\ref{S5T1} we see that there exists a nonnegative function $u_0\in\mathcal{L}^1_{\rm ul}(\RN)$, which obviously implies $J_{\alpha}(u_0)\in\mathcal{L}^1_{\rm ul}(\RN)$,  such that (\ref{S1E1}) admits no nonnegative solution.
The proof of (ii) is complete.
\end{proof}
It follows from a direct calculation that $f$ given in Corollary~\ref{S5C2} behaves as follows:
\[
f(u)=\frac{N}{2}u^{1+{2}/{N}}[\log(u+e)]^{-2\alpha/N}(1+o(1))\ \ \textrm{as}\ \ u\to\infty.
\]

\section{The case $f_{\beta}(u)=u^{1+2/N}[\log(u+e)]^{\beta}$}
\begin{proof}[\bf Proof of Theorem~\ref{THD}]
We prove (i).
By \eqref{eq01+} with $\hat{\rho}=1$ and \eqref{eq02+} with $\hat{\rho}=1$ we see that
if there exists $\rho<1$ such that \eqref{TH3E1} holds, then $J_\alpha(u)$ is convex for large $u>0$.

It suffices to show that \eqref{TH3E1} holds in both cases (a) and (b).
By direct calculation we have
\begin{align*}
&(f'(u)F(u)-q)\log(h(u)+e)\\
&\hspace{10pt}=(f'(u)F(u)-q)\log(u+e)\cdot\frac{\log(h(u)+e)}{\log(u+e)}\\
&\hspace{10pt}=\left\{\frac{\beta u}{u+e}\cdot\frac{F(u)}{u^{-2/N}[\log(u+e)]^{-\beta}}
-\left(1+\frac{2}{N}\right)\left(\frac{N}{2}-\frac{F(u)}{u^{-2/N}[\log(u+e)]^{-\beta}}
\right)\log(u+e)
\right\}\cdot\frac{\log(h(u)+e)}{\log(u+e)}.
\end{align*}
By L'Hospital's rule we have
\begin{align*}
\lim_{u\to\infty} \frac{F(u)}{u^{-2/N}[\log(u+e)]^{-\beta}}&=
\lim_{u\to\infty} \frac{\frac{dF}{du}(u)}{\frac{d}{du}(u^{-2/N}[\log(u+e)]^{-\beta})}\\
&=\lim_{u\to\infty} \left(\frac{2}{N}+\frac{\beta u}{u+e}[\log(u+e)]^{-1}\right)^{-1}
=\frac{N}{2}.
\end{align*}
We see that
\[
\left(\frac{N}{2}-\frac{F(u)}{u^{-2/N}[\log(u+e)]^{-\beta}}\right)\log(u+e)
=\frac{\frac{N}{2}u^{-2/N}[\log(u+e)]^{-\beta}-F(u)}{u^{-2/N}[\log(u+e)]^{-\beta-1}},
\]
which implies that
\begin{multline*}
\lim_{u\to\infty} \left(\frac{N}{2}-\frac{F(u)}{u^{-2/N}[\log(u+e)]^{-\beta}}\right)\log(u+e)=
\lim_{u\to\infty} \frac{\frac{d}{du}\left(\frac{N}{2}u^{-2/N}[\log(u+e)]^{-\beta}-F(u)\right)}{\frac{d}{du}(u^{-2/N}[\log(u+e)]^{-\beta-1})}\\
={\lim_{u\to\infty}}\frac{-\frac{N}{2}\beta\cdot u^{-2/N}[\log(u+e)]^{-\beta-1}\cdot\frac{1}{u+e}}{-u^{-1-2/N}[\log(u+e)]^{-\beta-1}\left(\frac{2}{N}+(\beta+e)\cdot\frac{u}{u+e}[\log(u+e)]^{-1}\right)}
=\left(\frac{N}{2}\right)^2 \beta.
\end{multline*}
Moreover, since $\frac{d}{du}(u+e)/\{\frac{d}{du}(f(u)F(u))\}=1/(f'(u)F(u)-1)\to2/N$ as $u\to\infty$, we obtain
\[
\frac{\frac{d}{du}\left\{\log(h(u)+e)\right\}}{\frac{d}{du}\left\{\log(u+e)\right\}}
=\frac{\frac{N}{2}\cdot\frac{u+e}{f(u)F(u)}}{1+F(u)^{N/2}}\to1 \ \ \text{as $u\to\infty$,}
\]
and hence ${\log(h(u)+e)}/{\log(u+e)}\to1$ as $u\to\infty$.
Therefore, we have
\begin{eqnarray}
\label{eq40}
\lim_{u\to\infty} (f'(u)F(u)-q)\log(h(u)+e)=-\left(\frac{N}{2}\right)^2 \beta.
\end{eqnarray}
Since $\beta\ge-1$ and $\alpha>N/2$, or $\beta>-1$ and $\alpha=N/2$,
we have $-{N\beta}/{(2\alpha)}<1$. Then we can choose 
$\rho$ such that $-{N\beta}/{(2\alpha)}<\rho<1$, which leads to $-\left(\frac{N}{2}\right)^2 \beta<\frac{N}{2}\alpha\rho$. By this together with \eqref{eq40} we obtain
\eqref{TH3E1} in both cases (a) and (b).

We prove (c).
Let $-(1+2/N)\kappa<\beta<-1$, $\theta=1$ and $J(u)=u$.
In order to apply Theorem~\ref{TH1}~(i) we check all the assumptions.
Since $\frac{d}{du}\left(\frac{J'(u)}{J(u)^{1-\theta}}\right)=0$ and
\[
\frac{d}{du}\left(\frac{f(u)}{J(u)^{\theta}}\right)
=\frac{2}{N}u^{\frac{2}{N}-1}[\log(u+e)]^{\beta}
\left(1+\frac{\beta u}{(u+e)\log(u+e)}\right)>0\ \ \textrm{for large}\ u>0,
\]
$f(u)/J(u)^{\theta}$ and $J'(u)/J(u)^{1-\theta}$ are nondecreasing for large $u>0$.
If $\eta>0$ is large, then
\begin{multline*}
\tJ(\eta)\int_{\eta}^{\infty}\frac{\tf(\tau)J'(\tau)d\tau}{J(\tau)^{1+2/N}}=
\frac{J'(\eta)}{J(\eta)^{1-\theta}}
\int_{\eta}^{\infty}\frac{f(\tau)J'(\tau)d\tau}{J(\tau)^{1+\theta+2/N}}
=\int_{\eta}^{\infty}\frac{[\log(\tau+e)]^{\beta}d\tau}{\tau}\\
\le\int_{\eta}^{\infty}\frac{2[\log(\tau+e)]^{\beta}d\tau}{\tau+e}
=\frac{2}{-\beta-1}[\log(\eta+e)]^{\beta+1}\to 0\ \ \textrm{as}\ \ \eta\to\infty.
\end{multline*}
Then, (\ref{TH1E1}) holds.
It follows from Theorem~\ref{TH1}~(i) that (\ref{S1E1}) has a nonnegative solution.
A proof of (c) is complete.

We prove (a) of (ii).
It suffices to show that $f=f_\beta$ satisfies all the assumptions of Theorem~\ref{THC}.
We easily see that $f$ satisfies (\ref{f}).
We check the assumption Theorem~\ref{THC}~(i).
When $\beta>0$, since $F^{-1}_{\beta}\circ F_{\beta}(u)=u$, Theorem~\ref{THC}~(i) holds.
When $-1<\beta\le0$, we have $f'(u)F(u)\ge1+N/2$ for large $u>0$.
Thus, Theorem~\ref{THC}~(i) follows from Remark~\ref{S1R3}.
We check the assumption Theorem~\ref{THC}~(ii).
We choose $\delta$ such that $\max\{-\beta,0\}<\delta<1$.
Since $\beta+\delta>0$, we have
\[
\lim_{u\to\infty} \frac{\frac{dF}{du}(u)}{\frac{d}{du}\left\{u^{-2/N}[\log(u+e)]^{\delta}\right\}}
=\lim_{u\to\infty} \frac{1}{[\log(u+e)]^{\beta+\delta}\left(\frac{2}{N}-\frac{\delta u}{(u+e)\log(u+e)}\right)}=0.
\]
By L'Hospital's rule we have
$F(u)/\left\{u^{-2/N}[\log(u+e)]^{\delta}\right\}\to0$ as $u\to\infty$. 
Thus, Theorem~\ref{THC}~(ii) also holds.
Applying Theorem~\ref{THC}, we obtain (a) of (ii).

We prove (b) of (ii).
Since $\beta\ge -(1+2/N)\kappa$, $f_\beta(u)$ is nondecreasing, and Theorem~\ref{S5T1} is applicable.
Since there is $\sigma>1$ such that $\tf(\tau)=f(\tau)/\tau$ for $\tau\ge \sigma$,  we have
\[
\int_1^{\infty}\frac{\tf(\tau)d\tau}{\tau^{1+2/N}}
\ge\int_{\sigma}^{\infty}\frac{f(\tau)d\tau}{\tau^{2+2/N}}
\ge\int_{\sigma}^{\infty}\frac{d\tau}{(\tau+e)\log(\tau+e)}
=\left[\log\log(\tau+e)\right]_{\sigma}^{\infty}
=\infty,
\]
and hence it follows from Theorem~\ref{S5T1} that there exists a nonnegative function $u_0\in\mathcal{L}^1_{\rm ul}(\RN)$ such that (\ref{S1E1}) admits no nonnegative solution.
Next, we show that $J_{\alpha}(u_0)\in\mathcal{L}^1_{\rm ul}(\RN)$.
We have
\[
F(u)\ge\log(u+e)\int_u^{\infty}\frac{d\tau}{\tau^{1+2/N}}=\frac{N}{2}\frac{\log(u+e)}{u^{2/N}}.
\]
Hence,
\begin{equation}\label{THDPE1}
h(u):=F(u)^{-N/2}\le C\frac{u}{[\log(u+e)]^{N/2}}.
\end{equation}
Since $0\le \alpha\le N/2$, by (\ref{THDPE1}) we  see that
\begin{equation}\label{THDPE2}
\frac{[\log(h(u)+e)]^{\alpha}}{[\log(u+e)]^{N/2}}\le C\ \ \textrm{for}\ \ 
u\ge 0.
\end{equation}
By (\ref{THDPE1}) and (\ref{THDPE2}) we have
\begin{multline}\label{THDPE3}
0\le J'_{\alpha}(u)
=\frac{N}{2}\left(1+\frac{\alpha h(u)}{(h(u)+e)\log(h(u)+e)}\right)\frac{[\log(h(u)+e)]^{\alpha}}{f(u)F(u)^{1+N/2}}\\
\le C\frac{[\log(h(u)+e)]^{\alpha}}{u^{1+2/N}[\log (u+e)]^{-1}}\left(\frac{Cu}{[\log(u+e)]^{N/2}}\right)^{1+2/N}
\le C\frac{[\log(h(u)+e)]^{\alpha}}{[\log(u+e)]^{N/2}}\le C\ \ \textrm{for}\ \ 
u\ge 0.
\end{multline}
Since $u_0\in\mathcal{L}^1_{\rm ul}(\RN)$, by (\ref{THDPE3}) we see that $J_{\alpha}(u_0)\in\mathcal{L}^1_{\rm ul}(\RN)$.

Proofs of all the cases are complete.
\end{proof}

\section{Regularly varying functions and rapidly varying functions}
In this section we always assume the two exponents $p$ and $q$ always satisfy
\[
\begin{cases}
\frac{1}{p}+\frac{1}{q}=1 & \textrm{if}\ 1<q<\infty,\\
p=\infty & \textrm{if}\ q=1.
\end{cases}
\]

The following theorem is a fundamental property of ${\rm RV}_p$:
\begin{proposition}[Karamata's representation theorem]\label{S7P1}
There exsit functions $a(s)$ and $b(u)$ with
\[
\lim_{u\to\infty}b(u)=b_0\ (0<b_0<\infty)
\ \ \textrm{and}\ \ 
\lim_{s\to\infty}a(s)=p\ (0\le p<\infty)
\]
and $u_0\ge 0$ such that for $u>u_0$,
\begin{equation}\label{S7P1E1}
f(u)=b(u)\exp\left(\int_{u_0}^u\frac{a(s)}{s}ds\right)
\end{equation}
if and only if $f\in {\rm RV}_p$ $(0\le p<\infty)$.
\end{proposition}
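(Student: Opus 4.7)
The plan is to prove the two implications separately using the classical Karamata strategy.

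For the sufficiency (the representation implies $f\in{\rm RV}_p$), I would fix $\lambda>0$ and compute, for $u>u_0/\lambda$,
$$\frac{f(\lambda u)}{f(u)}=\frac{b(\lambda u)}{b(u)}\exp\left(\int_u^{\lambda u}\frac{a(s)}{s}\,ds\right)=\frac{b(\lambda u)}{b(u)}\exp\left(\int_1^\lambda\frac{a(ut)}{t}\,dt\right),$$
where the second equality uses the substitution $s=ut$. The prefactor tends to $1$ since $b(u)\to b_0\in(0,\infty)$. Since $a(ut)\to p$ for every fixed $t>0$ and $a$ is bounded on a neighborhood of infinity, bounded convergence on the compact interval between $1$ and $\lambda$ gives $\int_1^\lambda a(ut)t^{-1}\,dt\to p\log\lambda$. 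Exponentiating yields $f(\lambda u)/f(u)\to\lambda^p$, so $f\in{\rm RV}_p$.

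For the necessity I would first reduce to the slowly varying case. Writing $L(u):=f(u)/u^p$ for $u$ large enough that $f(u)>0$, the hypothesis $f\in{\rm RV}_p$ forces $L\in{\rm RV}_0$. If one can establish the representation
$$L(u)=c(u)\exp\left(\int_{u_0}^u\frac{\varepsilon(s)}{s}\,ds\right),\qquad c(u)\to c_0>0,\ \varepsilon(s)\to 0,$$
then combining with the identity $u^p=u_0^p\exp\bigl(\int_{u_0}^u p\,s^{-1}\,ds\bigr)$ and setting $b(u):=u_0^p c(u)$, $a(s):=p+\varepsilon(s)$ produces the required representation of $f$, with $b(u)\to u_0^p c_0\in(0,\infty)$ and $a(s)\to p$.

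To obtain the slowly varying representation I pass to logarithmic coordinates by setting $h(y):=\log L(e^y)$; the slow variation of $L$ is then the statement $h(y+t)-h(y)\to 0$ pointwise in $t$. The essential analytic input is the \emph{uniform convergence theorem} for slowly varying functions (see \cite{GD87}), which upgrades this convergence to uniform convergence over compact sets in $t$. Granting this, I smooth by setting
$$\eta(y):=\int_y^{y+1}h(t)\,dt,$$
so that $\eta'(y)=h(y+1)-h(y)\to 0$ and $\eta(y)-h(y)=\int_0^1[h(y+s)-h(y)]\,ds\to 0$ by uniform convergence. The tautological identity
$$h(y)=\bigl[h(y)-\eta(y)+\eta(y_0)\bigr]+\int_{y_0}^y\eta'(t)\,dt$$
displays $h$ as a term converging to $\eta(y_0)$ plus the integral of a function tending to $0$. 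Changing variables $u=e^y$, $s=e^t$ converts this into the slowly varying representation of $L$, with $c(u):=\exp\bigl(h(\log u)-\eta(\log u)+\eta(y_0)\bigr)$ and $\varepsilon(s):=\eta'(\log s)$.

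The main obstacle is the uniform convergence theorem; in the continuous setting of the paper ($f\in C[0,\infty)$) this is classical and can be quoted from \cite{GD87}, after which the remainder of the argument is the change of variables above together with the algebraic assembly of the $u^p$ and $L(u)$ factors.
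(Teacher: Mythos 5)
The paper does not actually prove Proposition~\ref{S7P1}: it is quoted verbatim as a classical result, with the reference ``see \cite[Theorem~1.5]{GD87} for details,'' so there is no in-paper argument to compare against. Your sketch is the standard textbook proof of Karamata's representation theorem, and it is correct. The sufficiency direction (substitute $s=ut$, use $b(\lambda u)/b(u)\to 1$ and bounded convergence of $a(ut)\to p$ on the compact interval between $1$ and $\lambda$) is complete as written. The necessity direction is also correctly organized: the reduction to the slowly varying factor $L(u)=f(u)/u^p$, the passage to logarithmic coordinates $h(y)=\log L(e^y)$, the smoothing $\eta(y)=\int_y^{y+1}h$, and the tautological decomposition $h(y)=[h(y)-\eta(y)+\eta(y_0)]+\int_{y_0}^y\eta'$ all check out (note $\eta'(y)=h(y+1)-h(y)\to0$ and $\eta-h\to0$ indeed require the uniform convergence theorem, exactly as you say). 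You are right to isolate the uniform convergence theorem as the one nontrivial input; since in this paper $f$ is continuous and positive on $(0,\infty)$ by (\ref{f}), quoting the UCT from \cite{GD87} is legitimate and consistent with the paper's own practice of outsourcing the entire proposition to that reference. In short: you have supplied a correct proof where the paper supplies only a citation.
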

See \cite[Theorem~1.5]{GD87} for details.
Note that in this section $u_0$ does not stand for an initial function.

Hereafter, we assume that $f$ satisfies (\ref{f}).
\begin{lemma}\label{S7L1}
Suppose that $f\in X_q$ for some $q\in(1,\infty)$.
Then there exist $a(s)$ and $b(u)$ with
\[
\lim_{u\to\infty}b(u)=b_0\ (0<b_0<\infty)\ \ \textrm{and}\ \ 
\lim_{s\to\infty}a(s)=p
\]
and $u_0>0$ such that (\ref{S7P1E1}) holds for $u>u_0$.
\end{lemma}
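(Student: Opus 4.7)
The plan is to construct the Karamata representation explicitly rather than invoke it abstractly. I will set $a(s):=sf'(s)/f(s)$, which is continuous and positive on $(0,\infty)$ under assumption (\ref{f}), and take $b(u)$ to be the constant $f(u_0)$ for a suitable $u_0>0$. Then
\begin{equation*}
\int_{u_0}^u\frac{a(s)}{s}\,ds=\int_{u_0}^u\frac{f'(s)}{f(s)}\,ds=\log\frac{f(u)}{f(u_0)},
\end{equation*}
so (\ref{S7P1E1}) becomes the tautology $f(u)=f(u_0)\cdot f(u)/f(u_0)$, and trivially $b(u)\to f(u_0)=:b_0\in(0,\infty)$. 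Everything then boils down to verifying that $a(s)\to p$, i.e.\ that $uf'(u)/f(u)\to q/(q-1)$ as $u\to\infty$.

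To prove this last claim I would use only the hypothesis $f'(u)F(u)\to q$ (without assuming $f\in C^2$, which the derivation of $1/p+1/q=1$ in the introduction relies on). First I would factor
\begin{equation*}
\frac{uf'(u)}{f(u)}=\frac{u}{f(u)F(u)}\cdot f'(u)F(u).
\end{equation*}
Setting $\phi(u):=f(u)F(u)$, a short calculation using $F'(u)=-1/f(u)$ gives $\phi'(u)=f'(u)F(u)-1\to q-1>0$. An elementary $\e$-$\delta$ integration argument (for any $\e>0$, pick $M$ so that $|\phi'(u)-(q-1)|<\e$ on $[M,\infty)$, integrate from $M$ to $u$, and divide by $u$) then shows $\phi(u)/u\to q-1$. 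Combining the two factors yields $uf'(u)/f(u)\to q/(q-1)=p$, which is exactly $\lim_{s\to\infty}a(s)=p$.

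The main obstacle, if any, is the Ces\`aro-type passage $\phi'(u)\to q-1\,\Rightarrow\,\phi(u)/u\to q-1$; this is standard but uses the positivity $q>1$ crucially, both to guarantee $\phi(u)\to\infty$ and to make $u/\phi(u)$ have a finite limit. Once this step is done, the rest of the proof is a short bookkeeping exercise and requires nothing beyond the first-order regularity of $f$ already contained in (\ref{f}).
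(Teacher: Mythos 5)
Your proof is correct. The tautological representation $f(u)=f(u_0)\exp\bigl(\int_{u_0}^u f'(s)/f(s)\,ds\bigr)$ is legitimate under (\ref{f}) (only $f\in C^1$, $f>0$ is needed; note $a(s)=sf'(s)/f(s)$ is merely nonnegative, not positive, but nothing requires positivity), and the factorization $uf'(u)/f(u)=\bigl(u/\phi(u)\bigr)\cdot f'(u)F(u)$ with $\phi=fF$, $\phi'=f'F-1\to q-1>0$, followed by the Ces\`aro step $\phi(u)/u\to q-1$, correctly yields $a(s)\to p=q/(q-1)$. The paper reaches the same representation by a different packaging: it integrates $(fF)'=q-1+\eta$ to get $fF=(q-1)u+h(u)$ with $h(u)/u\to 0$ (the same Ces\`aro fact you use), then integrates $1/(fF)=-F'/F$ to reconstruct $F$ and hence $f$ explicitly, reading off a non-constant $b(u)=\frac{(q-1)u_0}{F(u_0)}(1+\rho(u))$ and $a(s)=\frac{p+\rho(s)}{1+\rho(s)}$ with $\rho=h/((q-1)u)$. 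Your route is shorter and isolates the one substantive limit, $uf'(u)/f(u)\to p$, which is in fact how the paper itself handles the $q=1$ case of Theorem~\ref{THE}~(i); the paper's version has the mild advantage of exhibiting $a$ and $b$ directly in terms of $F$, which matches the structure it later exploits in Lemma~\ref{S7L2}. Both arguments stay within first-order regularity, so your avoidance of the $C^2$-based computation from the introduction is sound.
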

\begin{proof}
Let $\eta(u)\in C[0,\infty)$ such that $f'(u)F(u)=q+\eta (u)$.
Then $\eta(u)\to\ 0$ as $u\to\infty$.
We have $(f(u)F(u))'=q-1+\eta(u)$.
Integrating it over $[u_0,u]$, we have
\begin{equation}\label{S7L1E1}
f(u)F(u)=(q-1)u+h(u),
\end{equation}
where $h(u):=\int_{u_0}^u\eta(s)ds+f(u_0)F(u_0)-(q-1)u_0$.
Integrating $\frac{1}{f(u)F(u)}=\frac{1}{(q-1)u+h(u)}$ over $[u_0,u]$, we have $-\log\frac{F(u)}{F(u_0)}=\int_{u_0}^u\frac{ds}{(q-1)s+h(s)}$.
Hence,
\begin{equation}\label{S7L1E2}
\frac{1}{F(u)}=\frac{1}{F(u_0)}\exp\left(\int_{u_0}^u\frac{ds}{(q-1)s+h(s)}\right).
\end{equation}
By (\ref{S7L1E1}) and (\ref{S7L1E2}) we have
\[
f(u)=\frac{(q-1)u+h(u)}{F(u_0)}\exp\left(\int_{u_0}^u\frac{ds}{(q-1)s+h(s)}\right).
\]
Thus, we obtain (\ref{S7P1E1}), where
\[
a(u):=\frac{p+\rho (u)}{1+\rho (u)},\ \ 
b(u):=\frac{(q-1)u_0}{F(u_0)}(1+\rho(u)),\ \ 
 p:=\frac{q}{q-1}
\ \ \textrm{and}\ \ \rho(u):=\frac{h(u)}{(q-1)u}.
\]
Since $\rho(u)\to 0$ $(u\to\infty)$, we see that $b(u)\to b_0>0$ $(u\to\infty)$ and $a(u)\to p$ $(u\to\infty)$.
The proof is complete.
\end{proof}

\begin{proof}[\bf Proof of Theorem~\ref{THE}~(i)]
We consider the case $1<q<\infty$.
Let $f\in X_q$.
It follows from Lemma~\ref{S7L1} that there exist $a(s)$ and $b(u)$ such that $f(u)=b(u)\exp\left(\int_{u_0}^ua(s)ds/s\right)$, where $b(u)\to b_0>0$ $(u\to\infty)$ and $a(u)\to p:=q/(q-1)$ $(u\to\infty)$.
By Proposition~\ref{S7P1} we see that $f\in {\rm RV}_p$.

We consider the case $q=1$.
Let $f\in X_1$.
Since $f'(u)F(u)\to 1$ $(u\to\infty)$, for any $\e>0$, there is $u_{\e}>0$ such that $|f'(u)F(u)-1|<\e$ for $u>u_{\e}$.
By the mean value theorem we see that $0\le f(u)F(u)\le f(u_{\e})F(u_{\e})+\e(u-u_{\e})$ for $u>u_{\e}$.
We have
\[
\frac{u}{f(u)F(u)}\ge\frac{u}{f(u_{\e})F(u_{\e})+\e(u-u_{\e})}\to\frac{1}{\e}\ \ \textrm{as}\ \ u\to\infty.
\]
Since $\e>0$ can be chosen arbitrary small, we see that $\lim_{u\to\infty}u/f(u)F(u)=\infty$.
Then,
\begin{equation}\label{THEPE1}
\lim_{u\to\infty}\frac{uf'(u)}{f(u)}=\lim_{u\to\infty}f'(u)F(u)\frac{u}{f(u)F(u)}=\infty.
\end{equation}
Let $a(u):=uf'(u)/f(u)$.
Then, we easily see that $f(u)=f(u_0)\exp\left(\int_{u_0}^ua(s)ds/s\right)$.
It follows from (\ref{THEPE1}) that for any $M>0$, there is $u_M>0$ such that $a(u)>M$ for $u>u_M$.
Let $\lambda>1$.
For $u>u_M$,
\[
\frac{f(\lambda u)}{f(u)}=\exp\left(\int_u^{\lambda u}\frac{a(s)}{s}ds\right)
\ge\exp\left(\int_u^{\lambda u}\frac{M}{s}ds\right)=\lambda^M.
\]
Since $M$ is arbitrary large, we see that $f(\lambda u)/f(u)\to\infty$ as $u\to\infty$.
When $0<\lambda<1$, by similar way we can show that $f(\lambda u)/f(u)\to 0$ as $u\to\infty$.
Thus, $f\in{\rm RV}_{\infty}$.
\end{proof}

\begin{lemma}\label{S7L2}
The following hold:\\
(i) Suppose that $f'$ is nondecreasing.
If $f\in{\rm RV}_p$ for some $p\in(1,\infty)$, then $f\in X_q$.\\
(ii) Suppose that $f'(u)F(u)$ is nondecreasing.
If $f\in{\rm RV}_{\infty}$, then $f\in X_1$.
\end{lemma}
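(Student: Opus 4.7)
The plan is to exploit the factorization
\[
f'(u)F(u) = \frac{u f'(u)}{f(u)} \cdot \frac{f(u)F(u)}{u}
\]
and control each factor separately, using Karamata-type tools in the forward direction for (i), and invoking Theorem~\ref{THE}(i) for the converse argument in (ii).

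For part (i), I would first show $u f'(u)/f(u) \to p$. Because $f'$ is nondecreasing, the mean value theorem gives, for every $\lambda > 1$,
\[
(\lambda - 1)\, u f'(u) \le f(\lambda u) - f(u) \le (\lambda - 1)\, u f'(\lambda u).
\]
Dividing by $f(u)$ and using $f(\lambda u)/f(u) \to \lambda^p$, taking $\limsup$ in the left inequality and (after the substitution $v = \lambda u$) $\liminf$ in the right inequality, and then letting $\lambda \to 1^+$ sandwiches $u f'(u)/f(u)$ between $p$ and $p$, since $(\lambda^p - 1)/(\lambda - 1) \to p$ and $\lambda(1 - \lambda^{-p})/(\lambda - 1) \to p$. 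Next, because $1/f \in \mathrm{RV}_{-p}$ with $-p < -1$, the Karamata integral theorem (a standard consequence of Proposition~\ref{S7P1}) yields $F(u) \sim u/((p-1) f(u))$, so $f(u) F(u)/u \to 1/(p-1)$. Multiplying the two limits gives $f'(u)F(u) \to p/(p-1) = q$, i.e. $f \in X_q$.

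For part (ii), monotonicity of $f'(u)F(u)$ ensures that $q := \lim_{u \to \infty} f'(u)F(u) \in [1, \infty]$ exists; the heart of the argument is to exclude $q = \infty$. Suppose $q = \infty$. Since $(f(u)F(u))' = f'(u)F(u) - 1 \to \infty$, we get $f(u)F(u)/u \to \infty$. On the other hand, $f \in \mathrm{RV}_\infty$ implies that for each $\lambda > 1$ and each $M > \lambda$ there exists $u_M$ with $f(\lambda v)/f(v) \ge M$ for all $v \ge u_M$; iteration yields $f(\lambda^k u) \ge M^k f(u)$ for $u \ge u_M$ and all $k \ge 0$. Partitioning $[u,\infty) = \bigcup_{k \ge 0} [\lambda^k u, \lambda^{k+1} u]$ and using that $f$ is nondecreasing on each interval,
\[
F(u) \le \sum_{k=0}^\infty \frac{(\lambda^{k+1} - \lambda^k)\, u}{f(\lambda^k u)} \le \frac{(\lambda - 1)\, u}{f(u)} \sum_{k=0}^\infty \left( \frac{\lambda}{M} \right)^k = \frac{(\lambda - 1)\, u}{f(u)(1 - \lambda/M)}.
\]
Letting $M \to \infty$ forces $\limsup_{u \to \infty} f(u)F(u)/u \le \lambda - 1$ for every $\lambda > 1$, and then $\lambda \to 1^+$ gives $f(u)F(u)/u \to 0$, contradicting the previous divergence. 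Hence $q < \infty$, so $f \in X_q$. Applying Theorem~\ref{THE}(i), $f \in \mathrm{RV}_p$ with $p = q/(q-1)$ when $q > 1$; since the classes $\mathrm{RV}_p$ with $p \in [0,\infty)$ are pairwise disjoint from $\mathrm{RV}_\infty$, and we are given $f \in \mathrm{RV}_\infty$, this forces $q = 1$, i.e. $f \in X_1$.

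The main obstacle will be the sharp upper estimate on $F(u)$ needed in (ii) to rule out $q = \infty$: one must convert the defining rapid growth of $f$ into quantitative decay of $F$, and the geometric-partition bound above is the essential ingredient; everything else is bookkeeping.
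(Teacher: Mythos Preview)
Your argument is correct. For part (i) it coincides with the paper's proof: both show $uf'(u)/f(u)\to p$ via the mean-value/monotonicity sandwich, and then obtain $f(u)F(u)/u\to 1/(p-1)$ --- you cite the Karamata integral theorem, while the paper derives the same asymptotic by L'Hospital from the already-established limit $uf'(u)/f(u)\to p$.

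For part (ii) your route genuinely differs from the paper's. The paper first deduces (from the assumed monotonicity of $f'(u)F(u)$) that $f'$ is nondecreasing, uses the same sandwich as in (i) together with $f\in{\rm RV}_\infty$ to get $uf'(u)/f(u)\to\infty$, and then L'Hospital gives $f(u)F(u)/u\to 0$ directly. From there the paper sandwiches $f'(u)F(u)-1$ by integrating the monotone quantity:
\[
(\lambda-1)(f'(u)F(u)-1)\ \le\ \int_1^{\lambda}\bigl(f'(\mu u)F(\mu u)-1\bigr)\,d\mu\ =\ \frac{f(\lambda u)F(\lambda u)}{u}-\frac{f(u)F(u)}{u}\ \to\ 0,
\]
and the reverse inequality for $0<\lambda<1$, yielding $f'(u)F(u)\to 1$ directly. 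Your approach instead rules out $q=\infty$ by a dyadic/geometric-series upper bound on $F$ (a nice elementary estimate), and then invokes Theorem~\ref{THE}(i) plus the disjointness of ${\rm RV}_p$ and ${\rm RV}_\infty$ to force $q=1$. This is logically sound (Theorem~\ref{THE}(i) relies only on Lemma~\ref{S7L1}, not on Lemma~\ref{S7L2}, so there is no circularity), but it is more roundabout: you trade a short direct sandwich for an appeal to the Karamata representation machinery behind Theorem~\ref{THE}(i). The paper's argument is more self-contained, while yours isolates the estimate $f(u)F(u)/u\to 0$ as the sole place where ${\rm RV}_\infty$ enters.
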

\begin{proof}
(i)  We follow the strategy used in \cite[Proposition~1.7.11]{GD87}.
Let $1<p<\infty$ and $f\in {\rm RV}_p$.
Let $\lambda>1$.
Since $f'$ is nondecreasing,
\[
\frac{u(\lambda-1)f'(u)}{f(u)}\le\int_1^{\lambda}\frac{uf'(\mu u)}{f(u)}d\mu=\frac{f(\lambda u)-f(u)}{f(u)}.
\]
Since $f\in {\rm RV}_p$, we see that $\limsup_{u\to\infty}\frac{uf'(u)}{f(u)}\le\frac{\lambda^p-1}{\lambda-1}$ for all $\lambda>1$.
Letting $\lambda\downarrow 1$, we have $\limsup_{u\to\infty}\frac{uf'(u)}{f(u)}\le p$.
Let $0<\lambda<1$. Since
\begin{equation}\label{S7L2E1}
\frac{u(1-\lambda)f'(u)}{f(u)}\ge\int_{\lambda}^1\frac{uf'(\mu u)}{f(u)}d\mu=\frac{f(u)-f(\lambda u)}{f(u)},
\end{equation}
we have $\liminf_{u\to\infty}\frac{uf'(u)}{f(u)}\ge\frac{\lambda^p-1}{\lambda-1}$.
Letting $\lambda\uparrow 1$, we have $\liminf_{u\to\infty}\frac{uf'(u)}{f(u)}\ge p$.
Thus, $\lim_{u\to\infty}\frac{uf'(u)}{f(u)}=p$.
Since $p>1$, we can show that $\lim_{u\to\infty}\frac{u}{f(u)}=0$.
By L'Hospital's rule we have
\[
\lim_{u\to\infty}\frac{F(u)}{\frac{u}{f(u)}}
=\lim_{u\to\infty}\frac{-\frac{1}{f(u)}}{\frac{1}{f(u)}-\frac{uf'(u)}{f(u)^2}}=\frac{1}{p-1}.
\]
Then,
\[
\lim_{u\to\infty}f'(u)F(u)=\lim_{u\to\infty}\frac{uf'(u)}{f(u)}\frac{F(u)}{\frac{u}{f(u)}}=\frac{p}{p-1},
\]
and hence $f\in X_q$.\\
(ii) Let $f\in{\rm RV}_{\infty}$.
Since $F(u)$ is decreasing and $f'(u)F(u)$ is nondecreasing, $f'(u)$ is nondecreasing.
Let $0<\lambda<1$. Then, (\ref{S7L2E1}) holds.
Since $f\in {\rm RV}_{\infty}$ and $0<\lambda<1$, we see that $\lim_{u\to\infty}f(\lambda u)/f(u)=0$.
Then,
\[
\frac{uf'(u)}{f(u)}\ge\frac{1}{1-\lambda}\left(1-\frac{f(\lambda u)}{f(u)}\right)\to\frac{1}{1-\lambda}\ \ \textrm{as}\ \ u\to\infty.
\]
Letting $\lambda\uparrow 1$, we have $\lim_{u\to\infty}uf'(u)/f(u)=\infty$.
By L'Hospital's rule we have
\begin{equation}\label{S7L2E2}
\lim_{u\to\infty}\frac{f(u)F(u)}{u}=\lim_{u\to\infty}\frac{F(u)}{\frac{u}{f(u)}}
=\lim_{u\to\infty}\frac{-\frac{1}{f(u)}}{\frac{1}{f(u)}-\frac{uf'(u)}{f(u)^2}}=0.
\end{equation}
Let $\lambda>1$.
Since $f'(u)F(u)$ is nondecreasing, by (\ref{S7L2E2}) we have
\begin{multline*}
(\lambda-1)(f'(u)F(u)-1)\le\int_1^{\lambda}(f'(\mu u)F(\mu u)-1)d\mu
=\int_1^{\lambda}\frac{1}{u}\frac{d}{d\mu}\left(f(\mu u)F(\mu u)\right)d\mu\\
=\frac{f(\lambda u)F(\lambda u)}{\lambda u}\lambda-\frac{f(u)F(u)}{u}\to 0\ \ \textrm{as}\ \ u\to\infty,
\end{multline*}
and hence $\limsup_{u\to\infty}(f'(u)F(u)-1)\le 0$.
Let $0<\lambda <1$. Then
\[
(1-\lambda)(f'(u)F(u)-1)\ge\int_{\lambda}^1\left(f'(\mu u)F(\mu u)-1\right)d\mu
=\frac{f(u)F(u)}{u}-\frac{f(\lambda u)F(\lambda u)}{\lambda u}\lambda\to 0\ \ \textrm{as}\ \ u\to\infty,
\]
and hence $0\le\liminf_{u\to\infty}(f'(u)F(u)-1)$.
Thus, 
\[
0\le\liminf_{u\to\infty}(f'(u)F(u)-1)\le \limsup_{u\to\infty}(f'(u)F(u)-1)\le 0,
\]
and hence $\lim_{u\to\infty}f'(u)F(u)=1$.
We see that $f\in X_1$.
\end{proof}

\begin{proof}[\bf Proof of Theorem~\ref{THE}~(ii) and (iii)]
Theorem~\ref{THE}~(ii) and (iii) follow from Theorem~\ref{THE}~(i) and Lemma~\ref{S7L2}.
\end{proof}

\section{Summary and problems}
In this paper we study integrability conditions on $u_0$ which determines existence and nonexistence of a local in time nonnegative solution of (\ref{S1E1}).
In a critical and subcritical cases existence and nonexistence integrability conditions on $u_0$ are given by Theorem~\ref{THA} and Proposition~\ref{S1P2}~(ii).
In the doubly critical case these conditions are given by Theorems~\ref{THB} and \ref{THC}.
See Figure~\ref{fig1}.
When $f(u)=u^{1+2/N}[\log (u+e)]^{\beta}$, $\beta\ge-(1+2/N)\kappa$, where $\kappa$ is given by (\ref{kappa}),
the problem becomes a doubly critical case and a complete classification is given by Theorem~\ref{THD}.
Theorems~\ref{THA} and \ref{THB} can be applied to a nonlinearity in $X_q$.
A characterization of $X_q$ is given in Theorem~\ref{THE}.

We also study Problem (B) stated in Section~1.
Corollaries~\ref{S3C1} and \ref{S3C2} are sufficient conditions on $f$ for existence when $J$ is given.
Corollaries~\ref{S4C1} and \ref{S4C2} are sufficient conditions on $f$ for nonexistence when $J$ is given.
In Sections~5 and 9 we give a necessary and sufficient condition on $f$ for an existence of a nonnegative solution of (\ref{S1E1}) for every nonnegative function $u_0\in\mathcal{L}^r_{\rm ul}(\RN)$.
Section 5 (resp.\ 9) is for the case $r=1$ (resp. $r>1$).
This necessary and sufficient condition corresponds to \cite[Corollary~4.5 and Theorem~3.4]{LRSV16} which studied in the $L^r(\Omega)$ framework.

An objective of this study is to prove Table~\ref{tab1} under mild assumptions on $f$ and $J$.
This problem derives several concrete problems.

In the proof of Theorems~\ref{THA} and \ref{THB} we use Theorem~\ref{TH1} which relates the nonlinearity $f$ and the integrability $J(u_0)\in L^1_{\rm ul}(\RN)$.
The condition (\ref{TH1E2}) is a sufficient condition for an existence.
Since there is a gap between (\ref{TH1E2}) and (\ref{S4C1E0}) (or between (\ref{TH1E2}) and (\ref{S4C2E0})), it is natural to ask the following:
\begin{problem}
Suppose that (\ref{TH1E2}) does not hold.
Does there exist $u_0\ge 0$ such that $J(u_0)\in\mathcal{L}^1_{\rm ul}(\RN)$ and (\ref{S1E1}) admits no nonnegative solution?
\end{problem}

Corollaries~\ref{S3C1}, \ref{S3C2}, \ref{S4C1} and \ref{S4C2} are partial answers to Problem (B) and they are not optimal.
We do not know whether (\ref{S3C1E-1}), (\ref{S3C2E-1}), (\ref{S4C1E0-}) and (\ref{S4C2E-1}) are technical conditions or not.
\begin{problem}
Can one prove a theorem similar to Corollary~\ref{S4C1} (resp. Corollary~\ref{S4C2}) without assuming (\ref{S4C1E0-}) (resp. (\ref{S4C2E-1}))?
\end{problem}
There is a gap between (\ref{S3C1E1}) and (\ref{S4C1E0}) (or between (\ref{S3C2E0}) and (\ref{S4C2E0})).
\begin{problem}
Can one obtain a growth condition on $f$ which is shaper than (\ref{S3C1E1}), (\ref{S3C2E0}), (\ref{S4C1E0}) or (\ref{S4C2E0})?
\end{problem}
Theorem~\ref{THC} is a sufficient condition for nonexistence in a doubly critical case.
The assumption Theorem~\ref{THC}~(i) and (ii) seem technical.
\begin{problem}
Can one obtain a nonexistence result for a wide class of nonlinearities in a doubly critical case?
\end{problem}

\section{Appendix to \cite{LRSV16}: Solvability in $\mathcal{L}^r_{\rm ul}(\RN)$}
We recover \cite[Theorem 3.4 and Corollary 4.5]{LRSV16} in a framework of uniformly local Lebesgue spaces.
Only in this section we adopt the following definition of a solution:
\begin{definition}\label{S9D1}
Let $r\ge 1$.
We call $u(t)$ a solution of (\ref{S1E1}) if $u(t)$ is a solution in the sense of Definition~\ref{S1D1} and $u(t)\in L^{\infty}((0,T),L^r_{\rm ul}(\RN))$.
\end{definition}

In Theorem~\ref{S5T1} we already obtained a necessary and sufficient condition for an existence of a nonnegative solution of (\ref{S1E1}) in $\mathcal{L}^1_{\rm ul}(\RN)$ in the sense of Definition~\ref{S1D1}.
Since the solution $u(t)$ satisfies $u(t)\in L^{\infty}((0,T),L^1_{\rm ul}(\RN))$, $u(t)$ is also a solution in the sense of Definition~\ref{S9D1} with $r=1$.
\begin{corollary}\label{S9C1}
Suppose that $f\in C[0,\infty)$, $f$ is nonnegative and nondecreasing.
Then (\ref{S1E1}) admits a local in time nonnegative solution in the sense of Definition~\ref{S9D1} for every nonnegative function $u_0\in\mathcal{L}^1_{\rm ul}(\RN)$ if and only if (\ref{S5C1E1}) holds.
\end{corollary}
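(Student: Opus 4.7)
The plan is to derive Corollary~\ref{S9C1} as an immediate consequence of the results of Section~5, essentially by observing that Definition~\ref{S9D1} with $r=1$ imposes no extra requirement beyond Definition~\ref{S1D1}: the condition $u(t)\in L^{\infty}((0,T),L^1_{\rm ul}(\RN))$ is already built into Definition~\ref{S1D1}. Consequently, the two notions of solvability coincide for $r=1$, and Corollary~\ref{S9C1} will follow directly from Theorem~\ref{S5T1}.

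For the sufficient direction, assuming (\ref{S5C1E1}) I would invoke Corollary~\ref{S5C1}, which produces, for every nonnegative $u_0\in\mathcal{L}^1_{\rm ul}(\RN)$, a nonnegative solution $u(t)$ in the sense of Definition~\ref{S1D1} on some interval $(0,T)$ satisfying the uniform bound $\|u(t)\|_{L^1_{\rm ul}(\RN)}\le C$ for $0<t<T$. This uniform bound is exactly the additional $L^{\infty}((0,T),L^1_{\rm ul}(\RN))$ condition appearing in Definition~\ref{S9D1}, so $u$ is automatically a Definition~\ref{S9D1} solution. For the necessary direction, I would argue by contrapositive: if (\ref{S5C1E1}) fails, then (\ref{S5P1E1}) holds, and the nonexistence part of Theorem~\ref{S5T1} supplies a nonnegative $u_0\in\mathcal{L}^1_{\rm ul}(\RN)$ for which (\ref{S1E1}) admits no nonnegative solution in the sense of Definition~\ref{S1D1}. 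Since any Definition~\ref{S9D1} solution is in particular a Definition~\ref{S1D1} solution, this same $u_0$ rules out a solution in the sense of Definition~\ref{S9D1}.

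There is no substantive obstacle: the argument is a short bookkeeping observation identifying the two definitions for $r=1$ and then quoting Theorem~\ref{S5T1} and Corollary~\ref{S5C1} verbatim. The only point that deserves explicit comment in the write-up is the compatibility of the uniform norm bound coming out of Corollary~\ref{S5C1} with the integrability requirement in Definition~\ref{S9D1}, which is immediate from the pointwise-in-$t$ estimate $\|u(t)\|_{L^1_{\rm ul}(\RN)}\le C$.
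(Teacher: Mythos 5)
Your argument is correct and coincides with the paper's: the authors likewise observe that Definition~\ref{S1D1} already requires $u(t)\in L^{\infty}((0,T),L^1_{\rm ul}(\RN))$, so a Definition~\ref{S1D1} solution is automatically a Definition~\ref{S9D1} solution for $r=1$, and the corollary then follows verbatim from Theorem~\ref{S5T1}. No gaps.
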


Hereafter, we consider the case $r>1$.
\begin{theorem}\label{S9T3}
Let $r>1$.
Suppose that $f\in C[0,\infty)$, $f$ is nonnegative and nondecreasing.
Then, (\ref{S1E1}) admits a local in time nonnegative solution in the sense of Definition~\ref{S9D1} for every nonnegative function $u_0\in\mathcal{L}^r_{\rm ul}(\RN)$ if and only if
\begin{equation}\label{S9T3E1}
\limsup_{u\to\infty}\frac{f(u)}{u^{1+2r/N}}<\infty.
\end{equation}
\end{theorem}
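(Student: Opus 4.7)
The plan is to prove the sufficient direction by invoking Theorem~\ref{TH1}(ii) with $J(u) := u^r$ (smoothed near $u=0$ to lie in $C^2[0,\infty)$ when $1 < r < 2$), and to deduce the necessary direction by transplanting the bounded-domain nonexistence result of \cite[Theorem~3.4]{LRSV16} to $\RN$ via Dirichlet heat-kernel comparison, mirroring the argument used for Theorem~\ref{S5T1}.

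For the sufficient direction, the first step is to observe that $u_0 \in \mathcal{L}^r_{\rm ul}(\RN)$ implies $u_0^r = J(u_0) \in \mathcal{L}^1_{\rm ul}(\RN)$. The pointwise bound $|a^r - b^r| \le r(a+b)^{r-1}|a-b|$ for $a,b \ge 0$ together with H\"older's inequality with exponents $r/(r-1)$ and $r$ gives
\[
\bigl\|u_0(\cdot + y_0)^r - u_0(\cdot)^r\bigr\|_{L^1_{\rm ul}(\RN)} \le r\cdot 2^{r-1}\|u_0\|_{L^r_{\rm ul}(\RN)}^{r-1}\bigl\|u_0(\cdot + y_0) - u_0\bigr\|_{L^r_{\rm ul}(\RN)},
\]
which tends to $0$ as $|y_0| \to 0$ by Proposition~\ref{S2P3}, and a second application of Proposition~\ref{S2P3} yields $u_0^r \in \mathcal{L}^1_{\rm ul}(\RN)$. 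Next I pick $\theta \in (1/r, 1]$ with $\theta \le 1/r + 2/N$; such $\theta$ exists because $r > 1$. This choice ensures $r\theta - 1 > 0$ and $1 + 2r/N - r\theta \ge 0$. The growth hypothesis (\ref{S9T3E1}) gives $f(u) \lesssim u^{1 + 2r/N}$ for large $u$, so with $\xi$ sufficiently large one has $\tJ(\eta) = r\eta^{r\theta - 1}$ and $\tf(\tau) \lesssim \tau^{1 + 2r/N - r\theta}$ in (\ref{TH1E0}). A direct computation,
\[
\tJ(\eta) \int_\eta^\infty \frac{\tf(\tau) J'(\tau)}{J(\tau)^{1 + 2/N}}\,d\tau \lesssim \eta^{r\theta - 1}\int_\eta^\infty \tau^{-r\theta}\,d\tau = \frac{1}{r\theta - 1},
\]
verifies (\ref{TH1E2}). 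Theorem~\ref{TH1}(ii) then delivers a nonnegative solution $u(t)$ with $\|u(t)^r\|_{L^1_{\rm ul}(\RN)} \le C_0$ on $(0,T)$, that is, $u \in L^\infty((0,T), L^r_{\rm ul}(\RN))$, matching Definition~\ref{S9D1}.

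For the necessary direction I argue by contrapositive: suppose $\limsup_{u \to \infty} f(u)/u^{1 + 2r/N} = \infty$. By \cite[Theorem~3.4]{LRSV16}, for any smooth bounded domain $\Omega$ there exists a nonnegative $u_0 \in L^r(\Omega)$ such that (\ref{S1E1+}) admits no nonnegative $L^r(\Omega)$-solution; inspection of their proof yields the stronger conclusion $\int_0^t S_\Omega(t-s) f(S_\Omega(s) u_0)\,ds \equiv \infty$ on $\Omega$ for all small $t > 0$, in analogy with Proposition~\ref{S5P1}. Extending $u_0$ by zero outside $\Omega$, approximation by $C_c^\infty(\Omega)$-functions places the extension in $\mathcal{L}^r_{\rm ul}(\RN)$. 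If (\ref{S1E1}) on $\RN$ admitted a nonnegative solution $u(t)$ in the sense of Definition~\ref{S9D1}, then $u(t) \ge S(t) u_0 \ge S_\Omega(t) u_0$ on $\Omega$ (using $K \ge K_\Omega$), whence monotonicity of $f$ and (\ref{S1D1E1}) give $u(t) \ge \int_0^t S_\Omega(t-s) f(S_\Omega(s) u_0)\,ds = \infty$ on $\Omega$, contradicting $u(t) \in L^1_{\rm loc}(\RN)$.

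The main obstacle is in the necessary direction: locating or re-deriving the precise mild-integral form of nonexistence in \cite[Theorem~3.4]{LRSV16} that matches the comparison scheme above. The sufficient direction is essentially routine once Theorem~\ref{TH1}(ii) is invoked; the only subtleties are choosing $\theta$ carefully so that the estimate works uniformly for all $r > 1$ (a single universal $\theta = 1$ fails when $r > N/(N-2)$ in dimension $N \ge 3$, since then $\tf$ becomes bounded while $\tJ$ grows too fast) and the smoothing of $J(u) = u^r$ near the origin when $1 < r < 2$, both of which are straightforward adjustments.
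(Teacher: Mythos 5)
Your sufficiency argument is correct and essentially the paper's: same $J(u)=u^r$, same reduction to Theorem~\ref{TH1}~(ii) via $u_0^r\in\mathcal{L}^1_{\rm ul}(\RN)$, and your single parameter $\theta\in(1/r,\min\{1,1/r+2/N\}]$ neatly unifies the paper's two-case split ($\theta=1/r+2/N$ when $r\ge N/(N-2)$, $\theta=1$ otherwise); your closing remark about why $\theta=1$ alone fails for $r>N/(N-2)$ is also accurate, and the $C^2$-smoothing of $u^r$ near the origin is an inessential technicality.

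The gap is in the necessary direction, and it is exactly the point you flag as the "main obstacle." The nonexistence input from \cite{LRSV16} for $r>1$ (their Theorem~3.3, which is what the paper invokes --- Theorem~3.4 is the characterization, not the construction) does \emph{not} give a Duhamel term that is pointwise $\equiv\infty$ on $\Omega$; it gives
\[
\int_{\Omega}\Bigl|\int_0^tS_{\Omega}(t-s)f(S_{\Omega}(s)u_0)\,ds\Bigr|^r dx=\infty\ \ \textrm{for small }t>0,
\]
i.e.\ infiniteness of the $L^r(\Omega)$-\emph{norm} of a function that may well be finite almost everywhere. Consequently your intended contradiction with $u(t)\in L^1_{\rm loc}(\RN)$ does not follow, and no amount of "inspection of their proof" will upgrade the norm blow-up to pointwise blow-up in general. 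The correct (and easy) fix is to contradict the extra requirement of Definition~\ref{S9D1} instead: the comparison $K\ge K_{\Omega}$ and monotonicity of $f$ give $u(t)\ge\int_0^tS_{\Omega}(t-s)f(S_{\Omega}(s)u_0)\,ds$ on $\Omega$, hence $\|u(t)\|_{L^r(\Omega)}=\infty$, which is incompatible with $u\in L^{\infty}((0,T),L^r_{\rm ul}(\RN))$ since $\Omega$ is covered by finitely many unit balls. This is precisely why Section~9 adopts the stronger solution concept of Definition~\ref{S9D1} rather than Definition~\ref{S1D1}; with your $L^1_{\rm loc}$ contradiction the argument would only rule out solutions in a sense the theorem is not asserting.
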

When the function space is $L^r(\Omega)$, Theorem~\ref{S9T3} was obtained in \cite{LRSV16}.
Theorem~\ref{S9T3} corresponds to \cite[Theorem~3.4]{LRSV16}.
Since we work in $L^r_{\rm ul}(\RN)$, we do not have to care about a behavior of $f(u)$ near $u=0$.

\begin{proof}
First, we prove the sufficient part.
Specifically, we prove the existence of a solution provided that (\ref{S9T3E1}) holds.
Let $J(u):=u^r$.
We consider the case $r\ge N/(N-2)$ and $N\ge 3$.
Let $\theta:=1/r+2/N$.
Then, $r\theta=1+2r/N$ and $0<\theta\le 1$.
Since there is $C>0$ such that
\[
\tf(u)=\sup_{1\le\tau\le u}\frac{f(\tau)}{\tau^{r\theta}}<C,
\]
we have
\[
\tJ(\eta)\int_{\eta}^{\infty}\frac{\tf(\tau)J'(\tau)d\tau}{J(\tau)^{1+2/N}}
\le r^2\eta^{2/N}\int_{\eta}^{\infty}\frac{\tf(\tau)d\tau}{\tau^{1+2/N}}
\le r^2\eta^{2/N}\frac{N}{2}\frac{C}{\eta^{2/N}}=\frac{CNr^2}{2}.
\]
Since $u_0\in\mathcal{L}^r_{\rm ul}(\RN)$ implies $u_0^r\in\mathcal{L}^1_{\rm ul}(\RN)$, it follows from Theorem~\ref{TH1}~(ii) that (\ref{S1E1}) has a nonnegative solution $u(t)$ in the sense of Definition~\ref{S1D1} and $\left\|u(t)^r\right\|_{L^1_{\rm ul}(\RN)}<C_1$ for small $t>0$.
Since $\left\|u(t)\right\|_{L^r_{\rm ul}(\RN)}<C_1$ for small $t>0$, $u(t)$ is a nonnegative solution in the sense of Definition~\ref{S9D1}.
We consider the case $N=1,2$ or $1<r<N/(N-2)$ and $N\ge 3$.
Let $\theta=1$.
Then, $1/r+2/N>1$.
Since
\[
\tf(\tau)\le
\sup_{1\le\sigma\le\tau}\left(\frac{f(\sigma)}{J(\sigma)^{\theta}}\frac{J(\tau)^{1/r+2/N-1}}{J(\sigma)^{1/r+2/N-1}}\right)
=\sup_{1\le\sigma\le\tau}\left(\frac{f(\sigma)}{J(\sigma)^{1/r+2/N}}\right)J(\tau)^{1/r+2/N-1}
\le C\tau^{1+2r/N-r},
\]
we have
\[
\tJ(\eta)\int_{\eta}^{\infty}\frac{\tf(\tau)J'(\tau)d\tau}{J(\tau)^{1+2/N}}
\le r\eta^{r-1}\int_{\eta}^{\infty}\frac{Cr\tau^{r-1}}{\tau^{2r-1}}
=r^2\eta^{r-1}\frac{C}{(r-1)\eta^{r-1}}=\frac{Cr^2}{r-1}.
\]
Since $u_0^r\in\mathcal{L}^1_{\rm ul}(\RN)$, it follows from Theorem~\ref{TH1}~(ii) and the same argument above that (\ref{S1E1}) has a nonnegative solution in the sense of Definition~\ref{S9D1}.
The proof of the sufficient part is complete.

Second, we prove the necessary part.
Specifically, we prove that for a certain nonnegative function $u_0\in\mathcal{L}^r_{\rm ul}(\RN)$, (\ref{S1E1}) admits no nonnegative solution provided that (\ref{S9T3E1}) does not hold.
Let $\Omega\subset\RN$ be a bounded domain.
In \cite[Theorem~3.3]{LRSV16} it was shown that if (\ref{S9T3E1}) does not hold, {\it i.e.},
\[
\limsup_{u\to\infty}\frac{f(u)}{u^{1+2r/N}}=\infty,
\]
then there exists a nonnegative function $u_0\in L^r(\Omega)$ such that
\[
\int_{\Omega}\left| \int_0^tS_{\Omega}(t-s)f(S_{\Omega}(s)u_0)ds\right|^rdx=\infty\ \ \textrm{for small}\ t>0.
\]
We can easily see that $u_0\in\mathcal{L}^r(\RN)$, because $u_0\in L^r(\RN)$ and $C_0^{\infty}(\RN)$ is dense in $L^r(\RN)$.
If a solution $u(t)$ of (\ref{S1E1}) exists, then by the same argument as in the proof of Theorem~\ref{S5T1} we see that
\[
\left\|u(t)\right\|^r_{L^r(\Omega)}\ge
\int_{\Omega}\left| \int_0^tS_{\Omega}(t-s)f(S_{\Omega}(s)u_0)ds\right|^rdx=\infty\ \ \textrm{for small}\ t>0,
\]
which indicates that (\ref{S1E1}) with $u_0$ admits no nonnegative solution in the sense of Definition~\ref{S9D1}.
The proof is complete.
\end{proof}

\section{Appendix to \cite{GM21}}
In the proof of \cite[Theorems 1.4 (ii) and 1.6 (ii)]{GM21} the following was claimed:
Let $0<r<N/2$ and $2<\alpha<N/r$.
The function
\[
u_0(x):=
\begin{cases}
F^{-1}(|x|^{\alpha}) & \textrm{if}\ F(0)=\infty,\\
F^{-1}(\min\{ |x|^{\alpha},F(0)\}) & \textrm{if}\ F(0)<\infty
\end{cases}
\]
satisfies $F(u_0)^{-r}\in L^1_{\rm ul}(\RN)$ and (\ref{S1E1}) admits no nonnegative solution.

As mentioned in Remark~\ref{S1R1}~(vi), $F(u_0)^{-r}\in L^1_{\rm ul}(\RN)$ does not necessarily imply $u_0\in L^1_{\rm ul}(\RN)$ if $q>1+r$.
In that case it may occur that $S(t)u_0\not\in L^{\infty}((0,T),L^1_{\rm ul}(\RN))\cap L^{\infty}_{\rm loc}((0,T),L^{\infty}(\RN))$, and hence (\ref{S1D1E1}) does not hold.
Thus, the assumption $q\le 1+r$ should be added in \cite[Theorems 1.4 (ii) and 1.6 (ii)]{GM21} as follows:
\begin{proposition}
Let $f\in X_q$.
If $q\le 1+r$, then for each $r\in (0,N/2)$, there is a nonnegative function $u_0\in L^1_{\rm ul}(\RN)$ such that $F(u_0)^{-r}\in L^1_{\rm ul}(\RN)$ and (\ref{S1E1}) admits no nonnegative solution.
\end{proposition}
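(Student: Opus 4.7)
The plan is to use the same initial function as in \cite{GM21} and supplement the existing argument with the integrability estimate $u_0\in L^1_{\rm ul}(\RN)$, which is the single piece that requires the hypothesis $q\le 1+r$. Concretely, I would pick $\alpha\in(2,N/r)$ and set
\[
u_0(x):=\begin{cases}F^{-1}(|x|^{\alpha}) & \textrm{if}\ F(0)=\infty,\\ F^{-1}(\min\{|x|^{\alpha},F(0)\}) & \textrm{if}\ F(0)<\infty.\end{cases}
\]
The verification $F(u_0)^{-r}\in L^1_{\rm ul}(\RN)$ is immediate as in \cite{GM21}: near the origin $F(u_0(x))=|x|^{\alpha}$, hence $F(u_0)^{-r}=|x|^{-\alpha r}$, which is locally integrable because $\alpha r<N$; away from the origin $F(u_0)^{-r}$ is bounded, so $F(u_0)^{-r}\in L^1_{\rm ul}(\RN)$.

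The new ingredient is to show $u_0\in L^1_{\rm ul}(\RN)$. Here I would apply Lemma~\ref{S2L1}: for any $\e>0$, $F(u)\lesssim u^{-1/(q-1+\e)}$ for all large $u$, which after inversion reads $F^{-1}(v)\lesssim v^{-(q-1+\e)}$ for small $v>0$. Consequently $u_0(x)\lesssim|x|^{-\alpha(q-1+\e)}$ near the origin. Since $q\le 1+r$, we have $\alpha(q-1)\le\alpha r<N$, so one may choose $\e>0$ small enough that $\alpha(q-1+\e)<N$ still holds. This yields local integrability of $u_0$ near the origin, and because $u_0$ is bounded outside any fixed neighborhood of the origin we conclude $u_0\in L^1_{\rm ul}(\RN)$. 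In particular, $S(t)u_0$ lies in $L^{\infty}((0,T),L^1_{\rm ul}(\RN))\cap L^{\infty}_{\rm loc}((0,T),L^{\infty}(\RN))$ by Proposition~\ref{S2P2}~(i), so Definition~\ref{S1D1} is applicable.

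The nonexistence part is inherited from the original proof in \cite{GM21}: assuming a nonnegative solution $u(t)$ in the sense of Definition~\ref{S1D1}, one combines $u(t)\ge S(t)u_0$ with the Duhamel formula to obtain $u(t)\ge\int_0^t S(t-s)f(S(s)u_0)ds$, and the estimates on $S(s)u_0$ using the precise form of $u_0$ are shown in \cite{GM21} to force this lower bound to be infinite on a set of positive measure for every small $t>0$, contradicting $u(t)\in L^{\infty}_{\rm loc}((0,T),L^{\infty}(\RN))$. The main (and only new) obstacle is the $L^1_{\rm ul}$-integrability verification in the preceding paragraph; with the hypothesis $q\le 1+r$ inserted, it dissolves via a short appeal to Lemma~\ref{S2L1}, and the rest of \cite{GM21} transfers unchanged.
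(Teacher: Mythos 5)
Your proposal is correct and follows essentially the same route as the paper: the paper likewise keeps the initial function from \cite{GM21}, derives the bound $F(u)\le C u^{-1/(q-1+\e)}$ (i.e.\ the content of Lemma~\ref{S2L1}), inverts it to get $F^{-1}(v)\lesssim v^{-(q-1+\e)}$, and uses $q-1\le r$ together with $\alpha<N/r$ to choose $\e$ small enough that $\alpha(q-1+\e)<N$, yielding $u_0\in L^1_{\rm ul}(\RN)$; the nonexistence argument is then inherited from \cite{GM21} exactly as you describe.
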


\begin{proof}
If we prove
\begin{equation}\label{S8E0}
u_0\in L^1_{\rm ul}(\RN),
\end{equation}
then by Proposition~\ref{S2P2} we see that $S(t)u_0\in L^{\infty}((0,T),L^1_{\rm ul}(\RN))\cap L^{\infty}_{\rm loc}((0,T),L^{\infty}(\RN))$.

Hereafter, we prove (\ref{S8E0}).
Since $f'(u)F(u)\to q$ for large $u$, there is $\e_1>0$ such that
\[
f'(u)F(u)\le q+\e_1
\]
for large $u$.
Integrating $f'(u)/f(u)\le (q+\e_1)/f(u)F(u)$ over $[u_0,u]$, we have $f(u)F(u)^{q+\e_1}\le C_1$.
Integrating $1/C_1\le 1/f(u)F(u)^{q+\e_1}$ over $[u_0,u]$, we have $F(u)\le C_3(u-C_2)^{-1/(q-1+\e_1)}$.
Therefore, there is a large $C_4>0$ such that $F(u)\le C_4 u^{-1/(q-1+\e_1)}$ for large $u$.
Since $F^{-1}$ is decreasing, $u\ge F^{-1}(C_4u^{-1/(q-1+\e_1)})$, and hence $F^{-1}(u)\le C_5u^{-(q-1+\e_1)}$.
Let $\rho>0$ be small.
Then,
\begin{equation}\label{S8E1}
\int_{|x|\le\rho}F^{-1}(|x|^{\alpha})dx
\le C_6\int_{0}^{\rho}r^{-\alpha(q-1+\e_1)+N-1}dr.
\end{equation}
Since $q-1\le r$ and $\alpha=N(1-\e_2)/r$ for some $\e_2>0$, we have
\[
-\alpha(q-1+\e_1)+N-1\ge -\frac{N}{r}(1-\e_2)(r+\e_1)+N-1
=-1+\e_2N-\e_1(1-\e_2)\frac{N}{r}.
\]
Since $\e_1>0$ can be taken arbitrary small, we can take $\e_1>0$ such that $-\alpha(q-1+\e_1)+N-1>-1$.
By (\ref{S8E1}) we see that $\int_{|x|\le\rho}F^{-1}(|x|^{\alpha})dx<\infty$ and it indicates (\ref{S8E0}).
By the proof of \cite[Theorems 1.4 (ii) and 1.6 (ii)]{GM21} we see that the conclusion of the proposition holds.
\end{proof}



\begin{thebibliography}{99}
\bibitem{B89}{M.~van~den~Berg},
{\it Heat equation and the principle of not feeling the boundary},
 Proc. Roy. Soc. Edinburgh Sect. A {\bf 112} (1989) 257--262.

\bibitem{BC96}{H.~Brezis and T.~Cazenave},
{\it A nonlinear heat equation with singular initial data},
J. Anal. Math. {\bf 68} (1996), 277--304.

\bibitem{BP85}{P.~Baras and M.~Pierre},
{\it Crit\`{e}re d'existence de solutions positives pour des \'{e}quations semi-lin\'{e}aires non monotones},
Ann. Inst. H. Poincar\'{e} Anal. Non Lin\'{e}aire {\bf 2} (1985), 185--212.

\bibitem{CW98}{T.~Cazenave and F.~Weissler},
{\it Asymptotically self-similar global solutions of the nonlinear Schr\"{o}dinger and heat equations},
Math. Z. {\bf 228} (1998), 83--120.

\bibitem{CZ03}{C.~Celik and Z.~Zhou},
{\it No local $L^1$ solution for a nonlinear heat equation},
Comm. Partial Differential Equations {\bf 28} (2003), 1807--1831.

\bibitem{FI18}{Y.~Fujishima and N.~Ioku},
{\it Existence and nonexistence of solutions for the heat equation with a superlinear source term},
J. Math. Pures Appl. {\bf 118} (2018), 128--158. 

\bibitem{GD87}{J.~Geluk and L.~de Haan},
{Regular variation, extensions and Tauberian theorems},
{\it CWI Tract, 40. Stichting Mathematisch Centrum, Centrum voor Wiskunde en Informatica, Amsterdam, 1987. iv+132 pp. ISBN: 90-6196-324-9.}

\bibitem{GM21}{T.~Giraudon and Y.~Miyamoto},
{\it Fractional semilinear heat equations with singular and nondecaying initial data},
to appear in Rev.\ Mat.\ Complut.

\bibitem{HI18}{K.~Hisa and K.~Ishige},
{\it Existence of solutions for a fractional semilinear parabolic equation with singular initial data},
Nonlinear Anal. {\bf 175} (2018), 108--132.

\bibitem{IKS16}{K.~Ishige, T.~Kawakami and M.~Sier{\.z}\c{e}ga},
{\it Supersolutions for a class of nonlinear parabolic systems},
J. Differential Equations {\bf 260} (2016), 6084--6107. 

\bibitem{LRSV16}{R.~Laister, J.~Robinson, M.~Sier{z}\c{e}ga and A.~Vidal-L\'{o}pez},
{\it A complete characterisation of local existence for semilinear heat equations in Lebesgue spaces},
Ann. Inst. H. Poincar\'e Anal. Non Lin\'{e}aire {\bf 33} (2016), 1519--1538.

\bibitem{LS20}{R.~Laister and M.~Sier\.{z}\c{e}ga},
{\it Well-posedness of semilinear heat equation in $L^1$},
Ann. Inst. H. Poincar\'e Anal. Non Lin\'{e}aire {\bf 37} (2020), 709--725.

\bibitem{MT06}{Y.~Maekawa and Y.~Terasawa},
{\it The Navier-Stokes equations with initial data in uniformly local $L^p$ spaces},
Differential Integral Equations {\bf 19} (2006), 369--400.

\bibitem{M18}{Y.~Miyamoto},
{\it A limit equation and bifurcation diagrams of semilinear elliptic equations with general supercritical growth},
J. Differential Equations {\bf 264} (2018), 2684--2707.

\bibitem{MN18}{Y.~Miyamoto and Y.~Naito},
{\it Singular extremal solutions for supercritical elliptic equations in a ball},
J. Differential Equations {\bf 265} (2018), 2842--2885.

\bibitem{MN20}{Y.~Miyamoto and Y.~Naito},
{\it Fundamental properties and asymptotic shapes of the singular and classical radial solutions for supercritical semilinear elliptic equations},
NoDEA Nonlinear Differential Equations Appl.\ {\bf 27} (2020) Paper No. 52, 25 pp.

\bibitem{MS19}{Y.~Miyamoto and M.~Suzuki},
{\it Weakly coupled reaction-diffusion system with rapidly growing nonlinearities and singular initial data},
Nonlinear Anal. {\bf 189} (2019), 111576.

\bibitem{M21}{Y.~Miyamoto},
{\it A doubly critical semilinear heat equation in the $L^1$ space},
 J. Evol. Equ. {\bf 21} (2021), 151--166.

\bibitem{QS07}{P.~Quittner and P.~Souplet},
{Superlinear parabolic problems. Blow-up, global existence and steady states},
{\it Birkh\"{a}user Advanced Texts: Basler Lehrb\"{u}cher. Birkh\"{a}user Verlag, Basel, 2007. xii+584 pp. ISBN: 978-3-7643-8441-8.}

\bibitem{RS13}{J.~Robinson and M.~Sier\.{z}\c{e}ga},
{\it Supersolutions for a class of semilinear heat equations},
Rev. Mat. Complut. {\bf 26} (2013), 341--360. 

\bibitem{S19}{M.~Suzuki},
{\it Local existence and nonexistence for reaction-diffusion systems with coupled exponential nonlinearities},
J. Math. Anal. Appl. {\bf 477} (2019), 776--804. 

\bibitem{W80}{F.~Weissler},
{\it Local existence and nonexistence for semilinear parabolic equations in $L^p$},
Indiana Univ. Math. J. {\bf 29} (1980), 79--102. 

\bibitem{W81}{F.~Weissler},
{\it Existence and nonexistence of global solutions for a semilinear heat equation},
Israel J. Math. {\bf 38} (1981), 29--40.

\bibitem{W86}{F.~Weissler},
{\it $L^p$-energy and blow-up for a semilinear heat equation},
Nonlinear functional analysis and its applications, Part 2 (Berkeley, Calif., 1983), 545--551, Proc. Sympos. Pure Math., 45, Part 2, Amer. Math. Soc., Providence, RI, 1986.


\end{thebibliography}
\end{document}